\newtheorem{thm}{Theorem}[section]
\newtheorem{conj}[thm]{Conjecture}
\newtheorem{prop}[thm]{Proposition}
\newtheorem{claim}[thm]{Claim}
\newtheorem{lemma}[thm]{Lemma}
\newtheorem{fact}[thm]{Fact}
\newtheorem*{emc}{Erd\H{o}s Maching Conjecture}
\theoremstyle{definition}
\newtheorem{defi}[thm]{Definition}
\def\eps{\varepsilon}
\def\cT{\mathcal T}
\def\cY{\mathcal Y}
\def\ex{\text{ex}}
\newcommand*{\rom}[1]{\expandafter{\romannumeral #1\relax}}
\numberwithin{equation}{section}
\begin{document}
%\linenumbers
\title[Large $ Y_{3,2} $-tilings]{Large $ Y_{3,2} $-tilings in $ 3 $-uniform hypergraphs}
\author{Jie Han}
\address{School of Mathematics and Statistics and Centre for Applied Mathematics, Beijing Institute of Technology, Beijing, China}
\email{\tt han.jie@bit.edu.cn}

\author{Lin Sun}
\address{School of Mathematics, Shandong University,
	Jinan, China.}
\email{linsun@mail.sdu.edu.cn, ghwang@sdu.edu.cn}

\author{Guanghui Wang}
%\address{School of Mathematics, Shandong University, Jinan, China.}
%\email{ghwang@sdu.edu.cn}	

\begin{abstract}
	
Let $Y_{3,2}$ be the $3$-graph with two edges intersecting in two vertices.
We prove that every $3$-graph $ H $ on $ n $ vertices with  at least $ \max \left \{ \binom{4\alpha n}{3}, \binom{n}{3}-\binom{n-\alpha n}{3} \right \}+o(n^3) $ edges contains  a $Y_{3,2}$-tiling covering more than $ 4\alpha n$ vertices,  for sufficiently large $ n $ and $0<\alpha< 1/4$. 
The bound on the number of edges is asymptotically best possible and solves a  conjecture of the authors for $3$-graphs that generalizes the Matching Conjecture of Erd\H{o}s.
\end{abstract}	
\maketitle	
%\noindent {\bf Keywords:} Hypergraph; Tiling.
\section{Introduction}
Given $ k\ge2 $, a \emph{$ k $-uniform hypergraph} $H$ (in short, \emph{$ k $-graph}) consists of a vertex set $V$ and an edge set $E\subseteq \binom{V}{k}$, where $ \binom{V}{k} $ denotes the family of all $ k $-subsets of $ V $. 
We denote by  $ e(H):=|E| $ the number of edges in $H$.
A \emph{matching} in $ H $ is a set of disjoint edges of $H$.
Erd\H{o}s \cite{MR260599} conjectured the edge condition that guarantees a matching of a certain size in 1965.
\begin{emc}
Let $ n,s,k $ be three positive integers such that $ k\ge2 $ and $ n \ge k(s+1)-1 $. 
If $H$ is a $ k $-graph on $ n $ vertices which does not have a matching of size $s+1$, then
\[ e(H)\le \max \left \{ \binom{k(s+1)-1}{k}, \binom{n}{k}-\binom{n-s}{k} \right \}. \]
\end{emc}
For the importance of this conjecture, we quote from \cite{0THE}: ``It was one of the favorite problems of Erd\H{o}s."
%\begin{quote}
 %It was one of the favorite problems of Erd\H{o}s.
%\end{quote}
The bounds in the conjecture come from two extremal constructions: an $ n $-vertex $k$-graph consisting of a complete $k$-graph on $k(s+1)-1$ vertices and $n-k(s+1)+1$ isolated vertices, and another construction is a complete $k$-graph on $n$ vertices with a complete $k$-graph on $n-s$ vertices removed.
%If $s+1 < \frac{n}{k+1}$, then~$\binom{n}{k} - \binom{n-s}{k} > \binom{k(s+1)-1}{k}$.Otherwise, $\binom{n}{k}-\binom{n-s}{k} < \binom{k(s+1)-1}{k}$.
For $ k\le3 $, the conjecture was settled in \cite{1959On,Frankl2017On,2012On,Tomasz2014On}.
For general $ k$, Erd\H os \cite{MR260599} himself proved the conjecture for $ n>n_{0}(k,s) $.
Subsequent improvements on $n_0$ have been obtained by various authors~\cite{BOLLOB1976SETS,2013Improved,0THE,2012The} and the current state of the art is $ n_0\le \frac{5}{3}sk-\frac{2}{3}s $ for sufficiently large $ s$ by Frankl and Kupavskii \cite{0THE}. 
Also, Kolupaev and Kupavskii~\cite{kol}  proved the conjecture for $ n\le(s+1)(k+\frac{1}{100k}) $, improving a result of Frankl \cite{Frankl2017Proof}.
%Also, Frankl \cite{Frankl2017Proof} showed the conjecture for all $ n\le(s+1)(k+k^{-2k-1}/2) $. %where $ \varepsilon $ depends on $ k $ only.Recently, Kolupaev and Kupavskii \cite{kol} improved the bound to $ n\le(s+1)(k+\frac{1}{100k}) $.

In this paper we study the following generalization of the Matching Conjecture.
Given two $k$-graphs $F$ and $H$, an \emph{$ F $-tiling} (\emph{integer $ F $-tiling}) of $H$ is a family of vertex-disjoint copies of $F$  in $ H $. 
%The number of copies of $F$ is called the \emph{size} of the $F$-tiling.
By the \emph{size} of an $ F $-tiling we mean the number of copies of $F$ contained in it. 
When $F$ is a single edge, an $F$-tiling is just a matching. 
For $k>b\ge 0$, let $Y_{k,b}$ be the $k$-graph consisting of two edges that intersect in exactly $b$ vertices. 
What is the largest number of edges in a $ k $-graph on $n$ vertices containing no  $Y_{k,b}$-tiling of size more than $s$?
We assume that $ n \ge (2k-b)(s+1)-1$ since otherwise the question is trivial. %as the complete $k$-graph on $n$ vertices has a $Y_{k,b}$-tiling of size at most $s$.
Here are two natural candidate extremal $ k $-graphs: a $k$-graph on $n$ vertices consisting of a complete $k$-graph on $(2k-b)(s+1)-1$ vertices and $n-(2k-b)(s+1)+1$ isolated vertices, and a  complete $k$-graph on $n$ vertices with a complete $k$-graph on $n-s$ vertices removed.
Moreover, for a $ k $-graph $ F $,  the Tur\'an number $ \ex( n , F ) $ is the maximum number of edges in a $ k $-graph on $ n $ vertices which does not contain $ F $ as a subgraph. Since $ \ex( n , Y_{k,b})  $ is $ o(n^k) $ (see Theorem~\ref{FrFu}), in each example, one can replace the (large) independent set by a $Y_{k,b}$-free $k$-graph.
%Thus, in the range where the second term above is significantly larger, it is reasonable to conjecture a tight bound $\binom{n}{k}-\binom{n-s}{k}+\text{ex}(n-s, Y_{k,b})$.
Gan, Han, Sun and Wang \cite{large} considered this natural generalization of the matching problem to $Y_{k,b}$-tilings and proposed the following conjecture. 
A more general conjecture was proposed by Lang~\cite{lang2023tiling}.
\begin{conj}\cite{large} 
Let $ n,s,k,b $ be positive integers such that $ k>b>0$ and $ n \ge (2k-b)(s+1)-1$. 
If $H$ is a $ k $-graph on $ n $ vertices which does not have a $Y_{k,b}$-tiling of size $s+1$, then
\[ 
e(H)\le \max \left \{ \binom{(2k-b)(s+1)-1}{k}, \binom{n}{k}-\binom{n-s}{k} \right \}+o(n^k). 
\]
\end{conj}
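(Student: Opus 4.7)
The plan is to adapt the absorption method for hypergraph tiling, combined with a stability dichotomy and weak hypergraph regularity to absorb the $o(n^k)$ slack. A preliminary reduction: if $s = o(n)$ then $\binom{(2k-b)(s+1)-1}{k} = o(n^k)$, so the hypothesis forces $e(H) = \omega(n^{k-1})$, and a greedy argument (remove any $s+1$ disjoint $Y_{k,b}$-copies one at a time, noting each removal deletes only $O(n^{k-1})$ edges) produces a $Y_{k,b}$-tiling of size $s+1$. So assume $s = \alpha n$ for some constant $\alpha \in (0, 1/(2k-b))$, and let $\alpha^* = \alpha^*(k,b)$ be the crossover at which the two terms in the maximum agree up to lower order.

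The core argument then runs in three steps. \textbf{Step 1 (absorber).} Build a $Y_{k,b}$-tiling $\mc A$ of size $o(n)$ in $H$ such that for every $U \sub V(H) \sm V(\mc A)$ with $|U|$ divisible by $2k-b$ and $|U|\le \beta n$ (some small $\beta$), the set $V(\mc A) \cup U$ admits a $Y_{k,b}$-tiling. This follows the R\"odl--Ruci\'nski--Szemer\'edi blueprint: first show that every $(2k-b)$-set $T$ has many \emph{absorbing gadgets}---constant-size sets $W \sups T$ carrying two $Y_{k,b}$-tilings with vertex sets $W$ and $W\sm T$ respectively---then select a random subcollection via Chernoff bounds and clean up collisions. \textbf{Step 2 (almost-perfect tiling).} Apply the weak regularity lemma to $H - V(\mc A)$, pass to the cluster $k$-graph, locate an almost-perfect fractional $Y_{k,b}$-tiling there (exploiting the density assumption above $\max\{\binom{(2k-b)(s+1)-1}{k},\binom{n}{k}-\binom{n-s}{k}\}$), and lift it via standard counting to an integer $Y_{k,b}$-tiling covering all but $o(n)$ vertices of $H - V(\mc A)$. \textbf{Step 3.} Absorb the leftover vertices using $\mc A$, producing a $Y_{k,b}$-tiling of size $s+1$.

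The above succeeds only when $H$ is sufficiently ``dense and unstructured,'' so it must be preceded by a stability dichotomy: either $H$ is $\eps$-close to one of the two extremal constructions---a copy of $K^{(k)}_{(2k-b)(s+1)-1}$ plus isolated vertices, or $K^{(k)}_n$ minus a $K^{(k)}_{n-s}$---in which case a direct perturbation analysis (an $\eps n^k$ surplus of edges in the clique case must create an edge leaving the clique, a $(2k-b)$-set of which already yields an extra $Y_{k,b}$ copy) produces a $Y_{k,b}$-tiling of size $s+1$; or $H$ is $\eps$-far from both constructions, in which case a supersaturation argument guarantees that every $(2k-b)$-set lies in polynomially many absorbing gadgets, feeding Step 1.

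The principal obstacle will be the construction of absorbing gadgets for arbitrary $k > b > 0$: when $b$ is close to $k$ the two edges of a $Y_{k,b}$-copy overlap in $b$ vertices, so designing a gadget whose two $Y_{k,b}$-tilings differ on exactly a prescribed $(2k-b)$-set requires balancing many partially overlapping edges and is not obviously feasible for every pair $(k,b)$. A secondary subtlety is the regime $\alpha$ near $\alpha^*$, where both extremal configurations are simultaneously tight; the stability analysis must separate them cleanly and retain error terms within the $o(n^k)$ slack, which is analogous to (but technically harder than) the known asymptotic resolution of the Erd\H{o}s Matching Conjecture.
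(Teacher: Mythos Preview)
This statement is a \emph{conjecture}; the paper does not prove it in general but only establishes the special case $k=3$, $b=2$ (Theorem~\ref{thm1}). There is thus no proof of the general statement in the paper to compare against, and your proposal is an attack on an open problem.

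That aside, the absorption framework is the wrong tool here and the plan cannot work as written. The target is a $Y_{k,b}$-tiling of size $s+1$, which for $s=\alpha n$ with $\alpha$ bounded away from $1/(2k-b)$ covers only a $(2k-b)\alpha<1$ proportion of $V(H)$; this is \emph{not} a perfect-tiling problem. Your Step~2 seeks an almost-perfect tiling and Step~3 absorbs the leftover vertices, but the moment any tiling of size at least $s+1$ is found you are done, so there is nothing to absorb and no reason to aim for almost-perfect. More fatally, Step~1 requires every $(2k-b)$-set to lie in many absorbing gadgets, and this is false even in the ``far from extremal'' branch of your dichotomy. Take $H$ to be the disjoint union of two cliques each on $(2k-b)(s+1)$ vertices together with $n-2(2k-b)(s+1)$ isolated vertices (choose $\alpha$ small enough that this is positive). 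Then $e(H)$ exceeds the conjectured bound by $\Theta(n^k)$, and $H$ is $\Theta(1)$-far from both extremal configurations, yet a linear number of vertices are isolated and lie in no edge, hence in no absorbing gadget whatsoever. The assertion ``supersaturation guarantees that every $(2k-b)$-set lies in polynomially many absorbing gadgets'' is simply false under an edge-count hypothesis; absorption needs a minimum-degree condition, which you do not have.

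For reference, the paper's proof of the case $k=3$, $b=2$ uses no absorption at all. Weak regularity passes to a cluster $3$-graph $R$; the key step (Lemma~\ref{lem1}) shows that if a maximum $Y$-tiling $\mathcal T$ in $R$ cannot be enlarged to a $\{Y,E\}$-tiling covering significantly more vertices, then $e(R)$ already lies below the conjectured bound. This is a direct edge-counting argument on triples of $Y$-copies versus the uncovered set, driven by a recursive restructuring of the tiling (Proposition~\ref{prop}) and a case analysis on an auxiliary bipartite/tripartite graph (Lemma~\ref{lm2}); a fractional hom$(Y)$-tiling trick then converts the outcome back into an integer $Y$-tiling in $H$.
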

The case $ s=0$ is an old conjecture of Erd\H{o}s \cite{MR0409246} and was resolved by Frankl and F\"uredi \cite{Peter1985Forbidding} (see Theorem~\ref{FrFu}). 
The case $ k=2$ and $ b=1 $ was confirmed by Grosu  and Hladk\'y \cite{MR2889516}.
%, which  says that a $Y_{k,b}$-free $k$-graph  has $o(n^k)$ edges. 
%For a $ k $-graph $ F $ and $ n\in \mathbb N $, the Tur\'an number of $ F $ is defined as $ \text{ex}(n, F)=\max\{e(H):|V(H)|=n, F\nsubseteq H\} $. So the result of \cite{Peter1985Forbidding} says $ \text{ex}(n,Y_{k,b})=o(n^k) $.
%The major terms in the conjecture come from two extremal constructions: a complete $k$-graph on $(2k-b)(s+1)-1$ vertices and a  complete $k$-graph on $n$ vertices with a complete $k$-graph on $n-s$ vertices removed.
Gan, Han, Sun and Wang \cite{large} confirmed the conjecture in some cases.
In this paper we verify the conjecture for $ k=3 $ and $ b=2 $, which extends the result of \cite{large}  in which the same result is proved for $ \alpha \in (0, 1/7) $. 
\begin{thm}[Main result]\label{thm1}
For every $ \alpha, \gamma\in (0,1/4) $ there exists $ n_{0} $ such that the following holds for each integer $n\ge n_0$. 
Let $H$ be a $3$-graph on $n$ vertices such that
\[ e(H)\ge \max \left \{ \binom{4\alpha n}{3}, \binom{n}{3}-\binom{n-\alpha n}{3} \right \}+\gamma n^3.\]
Then $H$ contains a $Y_{3,2}$-tiling covering more than $ 4\alpha n$ vertices.		
\end{thm}
Note that $Y_{3,2}$ can be viewed as a $3$-uniform loose cycle on four vertices and thus denoted by $C_4^3$, $\mathcal C_4^3$ or $C_2^3$ (a loose cycle of length two) by other authors. 
K{\"u}hn and  Osthus \cite{Daniela2006Loose},  Czygrinow, Debiasio  and  Nagle  \cite{2013Tiling} determined the minimum $ 2 $-degree threshold forcing perfect $Y_{3,2}$-tilings and the corresponding minimum vertex-degree threshold was studied in~\cite{2015Minimum} by Han and Zhao.
In addition, $Y_{k,b}$-tilings  have important applications on Dirac-type problems for Hamilton cycles in hypergraphs, see \cite{large,52,Jie2015Minimum,Han2015Minimum}.

Throughout the rest of the paper, we write $ Y:=Y_{3,2} $ for brevity.

\subsection*{Proof ideas.}
%We briefly talk about our proof ideas.
%For a moment, let us write $Y:=Y_{3,2}$ for short.
We first remark that we do not know how to use the powerful shifting technique in our context, which has been a crucial tool in studying large matchings. Indeed, the classical (edge) shifting may increase the size of a maximum $ Y $-tiling.

To prove Theorem~\ref{thm1}, we use the regularity method which reduces the $Y$-tiling problem to analyzing an auxiliary structure.
Indeed, as already used in~\cite{large}, we analyze a \emph{$\{Y, E\}$-tiling}, which is a $3$-graph consisting of vertex-disjoint union of a $Y$-tiling and a matching.
Our key new ingredient is Proposition~\ref{prop}, which characterizes the structure of a suboptimal $Y$-tiling and creates extra space for improving the tiling recursively.
Together with several other new ideas, this allows us to bridge between the two ``far-away'' extremal examples and solve the $Y$-tiling problem for the full range.
Moreover, the new proof is completely self-contained and does not depend on or use the proof in~\cite{large}.
At last, we use the fractional tilings and a trick in~\cite{Su} to avoid repeated use of the regularity lemma.

\subsection*{Notation}
%\noindent\textbf{Notation.}
Given a $ 3 $-graph $ H $  on $ V $, for $ U \subseteq V$, we write $ H-U $ for $ H[V\setminus U] $.
If $ U=\{v\} $ is a singleton, then we write $ H-v $ rather than $ H-\{v\} $.
We write $ F\subseteq H $ if $ F$ is a subgraph of $ H $.
Given a vertex $ v $ and vertex set $ S $ in the $3$-graph $ H $, we denote by $ \deg_{H}(v,S) $ the number of edges that contain $ v $ and two vertices from $ S $.

Given a (2-)graph $ G=(V,E) $, $A,B\subseteq V$, $ G[A,B] $ is the subgraph of $ G $ with $ E(G[A,B])=\{e\in E(G):|e\cap A|=|e\cap B|=1\} $.

Throughout the rest of this paper, we write $x\ll y\ll z$ to mean that we can choose constants from right to left, that is, for any $z>0$, there exist functions $f$ and $g$ such that, whenever $y\leq f(z)$ and $x\leq g(y)$, the subsequent statement holds. 
Statements with more variables are defined similarly.	
%The rest of this paper is organized as follows. 
%We introduce the weak hypergraph regularity lemma in Section 2 and prove Theorem~\ref{thm1} in Section 3 and Section 4.

%By Theorem~\ref{thmhh}, it suffices to consider  $k\ge3$, $\ell\le d < 2\ell \le k-1$ such that $2k-2\ell \ge (2(2k-2\ell-d)^2+1)(k-d-1)+1$ and $ k=5, d=\ell=2 $.
\section{Preliminaries}
In this section, we review the weak hypergraph regularity method, which is  a straightforward extension of Szemer\'{e}di's  regularity lemma for graphs \cite{szemeredi1975regular}. 
%First, we introduce the standard notation for the regularity lemma for $3$-graphs.  
Let $ H = (V, E) $ be a $3$-graph and let $ A_1,A_2, A_3  $ be pairwise disjoint non-empty
subsets of $ V $. 
We define $ e(A_1,A_2, A_3 ) $ to be the number of crossing edges, namely, those
with one vertex in each $ A_i, i \in [3] $, and the density of $ H $ with respect to $ (A_1,A_2, A_3 ) $ as \[d(A_1,A_2, A_3 ) = \frac{e(A_1,A_2, A_3 )}{|A_1||A_2||A_3|}.\]
For $\delta > 0 $ and $ d \ge 0 $, a triple $ (V_1, V_2, V_3) $ of  pairwise disjoint subsets $ V_1, V_2, V_3\subseteq V $ is called  $ (\delta, d) $-regular,  if
\[|d(A_1,A_2, A_3)-d|\le \delta\]
for all triples of subsets $ A_i \subseteq V_i $, $  i \in [3] $ with $ |A_i| \ge \delta|V_i| $. 
We say $ (V_1, V_2, V_3) $ is
$ \delta $-regular if it is $ (\delta, d) $-regular for some $ d\ge 0 $. 
It is immediate from the definition that in a $ (\delta, d) $-regular triple $ (V_1, V_2, V_3)$, if $ V_i'\subseteq V_i$ has size $ |V_i'| \ge c|V_i| $ for some  $c>\delta$ , then
$ (V_1', V_2', V_3') $ is $ (\delta/c, d) $-regular.
\begin{lemma} \label{reg}(Weak regularity lemma). 
For all integers  $ t_0\ge1 $ and for every $ \delta> 0 $, there exists $T_0=T_0(t_0,\delta)$ such that for sufficiently large $ n $ and every $3$-graph $ H=(V,E) $ on $ n $ vertices, there exists a partition $ V=V_0\cup V_1\cup \dots \cup V_t$ satisfying 
	
(i) $ t_0\le t \le T_0$,
	
(ii) $|V_1|=\dots= |V_t|$ and $ |V_0|\le \delta n$, and
	
(iii) for all but at most $ \delta\binom{t}{3} $ triples $ \{i_1,i_2,i_3\}\subseteq[t]$,  $(V_{i_1},V_{i_2} ,V_{i_3})$ is $ \delta $-regular.
\end{lemma}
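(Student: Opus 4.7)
The plan is to follow the density-increment strategy that proves Szemer\'edi's regularity lemma for graphs, adapted to triples. First I would fix the energy (mean-square density) of a partition $\mathcal P = \{V_1,\dots,V_t\}$ of $V(H)$ into parts of equal size to be
\[
q(\mathcal P) \;=\; \frac{1}{t^3}\sum_{1\le i<j<k\le t} d(V_i,V_j,V_k)^2,
\]
and note $0\le q(\mathcal P)\le 1$. The standard observation is that any refinement of $\mathcal P$ can only increase $q$, and in fact refining within a single unordered triple $(V_i,V_j,V_k)$ cannot decrease the contribution of that triple.

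Second, I would establish a refinement lemma for a single irregular triple: if $(V_i,V_j,V_k)$ fails to be $\delta$-regular, then there exist subsets $A_r\subseteq V_r$ ($r\in\{i,j,k\}$) with $|A_r|\ge\delta|V_r|$ witnessing $|d(A_i,A_j,A_k)-d(V_i,V_j,V_k)|>\delta$; partitioning each $V_r$ into $\{A_r,V_r\setminus A_r\}$ gives 8 cells whose contribution to $q$ exceeds the original contribution by at least $c\delta^5$ (Cauchy--Schwarz, exactly as in Szemer\'edi's argument). Summing over all the at least $\delta\binom{t}{3}$ offending triples and taking the common refinement over all of them yields a new partition with $q$ increased by $\Omega(\delta^6)$.

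Third, I would iterate: start from any equipartition into $t_0$ parts, apply the refinement step whenever the current partition is not $\delta$-regular, and observe that since $q\le 1$ the process must halt in $O(\delta^{-6})$ steps. Each step multiplies the number of parts by a bounded factor ($\le 2^{\binom{t}{3}\cdot 3}$ in the naive bound, later trimmed), so the final number of parts is bounded by a tower-type function $T_0 = T_0(t_0,\delta)$, and this is the bound I would record.

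The main obstacle is technical bookkeeping rather than conceptual: the refinement naturally produces parts of unequal size, so I must, at the end (and after each iteration, if desired), equitize the partition by cutting every part into pieces of a common size $\lfloor n/M\rfloor$ and dumping the leftovers into $V_0$, while arranging that $|V_0|\le\delta n$. This equitizing step can only lose an $o(1)$ fraction of each part, which slightly worsens densities but by an amount absorbable into $\delta$ (after replacing $\delta$ with $\delta/2$ throughout), so it does not affect the qualitative statement. One also has to be slightly careful that when triples are declared ``$\delta$-regular'' in the final equitized partition, the underlying irregularity-count in the non-equitized partition is small enough that redistributing vertices into $V_0$ cannot spoil more than $\delta\binom{t}{3}$ triples; this is where fixing $t\ge t_0$ large enough up front gives the needed slack.
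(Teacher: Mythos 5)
Your sketch is the standard density-increment proof, and it is essentially correct; the paper itself does not prove Lemma~\ref{reg} but merely states it as a "straightforward extension of Szemer\'edi's regularity lemma for graphs," citing \cite{szemeredi1975regular}, so there is no paper proof to compare against — your outline is the canonical one the paper is implicitly relying on. A couple of cosmetic remarks: your normalization of the energy index should be by $\binom{t}{3}$ rather than $t^3$ if you want the clean bound $q\le 1$ (though $1/t^3$ still gives $q\le 1/6$, which serves the same purpose), and the exact exponent in the per-triple energy gain depends on the precise defect form of Cauchy–Schwarz you use, but a polynomial gain of the form $c\delta^{O(1)}$ is all that is needed for the iteration to halt in $\mathrm{poly}(1/\delta)$ steps and yield a tower-type bound $T_0(t_0,\delta)$. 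Your handling of equitization and the leftover set $V_0$ is the right idea and closes the usual bookkeeping gap.
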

A vertex partition of a hypergraph $ H  $ as given by Lemma~\ref{reg}  will be referred to as a $ \delta $-regular $ t $-partition. 
For $ \delta> 0 $ and $ d> 0 $, we define the reduced hypergraph $R:=R(\delta,d,\mathcal{Q})$ of $ H $ with respect to such a $ \delta $-regular $ t $-partition $\mathcal{Q}$ on the vertex set $ [t]$ and $\{i_1,i_2,i_3\}\subseteq[t]$ is an edge if and only if  $(V_{i_1},V_{i_2} ,V_{i_3})$ is $ \delta $-regular and $d(V_{i_1},V_{i_2} ,V_{i_3})\ge d$.

The following lemma shows that	the reduced $3$-graph inherits the density of the original hypergraph. 
Its proof is standard, e.g., follows the lines of \cite[Proposition 16]{H2010Dirac} and thus omitted.

\begin{lemma} \label{Rdeg}
Suppose that  $1/n\ll \delta<d \ll \gamma,\mu,t_0$.	
Let $H$ be a $3$-graph on $n$ vertices with $e(H)\ge (\mu+ \gamma)  \binom{n}{3}$.
Then there exists $ T_0 $ and a $ \delta $-regular $ t $-partition $ \mathcal{Q} $ with $ t_0\le t \le T_0$ such that the reduced graph $R:=R(\delta,d,\mathcal{Q})$ satisfies $e(R)\ge (\mu+ \gamma/2)  \binom{|V(R)|}{3}$.	
%Let $H$ be a $3$-graph on $n$ vertices with  $\delta_s(H)\ge (\mu+\theta) \binom{n}{3-s}$.
%If $ H $ has a $ \delta $-regular partition $ \mathcal{Q} $ with the reduced graph $R:=R(\delta,d,\mathcal{Q})$,  then all but at most $\gamma \binom{|V(R)|}{s}$ $s$-sets $S \in \binom{V(R)}{s}$ have degree at least $(\mu+\theta/2) \binom{|V(R)|}{3-s}$. 
%Moreover, if  $e(H)\ge (\mu+\theta)  \binom{n}{3}$, then  $e(R)\ge (\mu+\theta/2)  \binom{|V(R)|}{3}$.	
\end{lemma}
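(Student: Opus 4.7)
The plan is a standard counting argument. Write $t=|V(R)|$ and $m=|V_1|=\dots=|V_t|$, so that $tm\geq(1-\delta)n$. Call an edge of $H$ \emph{counted} if its three vertices lie one each in $V_{i_1},V_{i_2},V_{i_3}$ for some $\{i_1,i_2,i_3\}\in E(R)$, and \emph{lost} otherwise. Each edge of $R$ corresponds to a triple of parts containing at most $m^3$ crossing edges of $H$, so the number of counted edges is at most $e(R)\,m^3$; thus it suffices to upper-bound the number $B$ of lost edges.

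I would split the lost edges into four groups: (i) edges meeting $V_0$, contributing at most $|V_0|\binom{n-1}{2}\le 3\delta\binom{n}{3}$; (ii) edges with at least two vertices in a common part $V_i$, contributing at most $t\binom{m}{3}+2\binom{t}{2}\binom{m}{2}m = O\bigl(\binom{n}{3}/t_0\bigr)$; (iii) crossing edges in one of the at most $\delta\binom{t}{3}$ irregular triples, contributing at most $\delta\binom{t}{3}m^3\le \delta\binom{n}{3}$; and (iv) crossing edges in regular triples of density less than $d$, contributing at most $d\binom{t}{3}m^3\le d\binom{n}{3}$. Summing,
\[B\;\le\;\bigl(4\delta+d+o(1)\bigr)\binom{n}{3},\]
where the $o(1)$ term tends to $0$ as $t_0\to\infty$.

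Combining with the hypothesis $e(H)\ge(\mu+\theta)\binom{n}{3}$, and using $\binom{t}{3}m^3\le(tm)^3/6\le\binom{n}{3}$ (valid once $m$ is large so that $\binom{tm}{3}\ge\binom{t}{3}m^3$), yields
\[e(R)\;\ge\;\bigl(\mu+\theta-4\delta-d-o(1)\bigr)\binom{t}{3}.\]
The desired bound follows once the parameters $\delta$, $d$, and $1/t_0$ are taken sufficiently small relative to $\theta$ so that $4\delta+d+o(1)\le\theta/2$. The only point needing care is choosing $t_0$ large enough that the $O(1/t_0)$ contribution from type~(ii) fits inside the $\theta/2$ slack; beyond this, the argument is routine regularity bookkeeping and presents no structural obstacle.
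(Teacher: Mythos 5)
Your proof is the standard counting argument, and the paper itself omits its proof for exactly this reason, referring to \cite[Proposition~16]{H2010Dirac}. The accounting of lost edges --- those meeting $V_0$, those with two or more vertices in a common part, those in irregular triples, and those in regular but sparse triples --- is the right decomposition, and the inequality $\binom{t}{3}m^3\le\binom{tm}{3}\le\binom{n}{3}$ (which you prove correctly from $m\ge 1$ and $tm\le n$; the parenthetical version ``$(tm)^3/6\le\binom{n}{3}$'' is actually false, but you don't rely on it) is the clean way to convert the bound on $e(R)m^3$ into a bound on $e(R)/\binom{t}{3}$.

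One thing worth flagging concerns the hierarchy. Your argument requires $4\delta+d+O(1/t_0)\le\theta/2$, i.e.\ $\delta$, $d$ and $1/t_0$ all small relative to $\theta$. The lemma as printed asserts $1/n\ll\theta\ll\mu,d,\delta$, which under the paper's convention would make $\theta$ much \emph{smaller} than $d$ and $\delta$; under that reading the lemma is false (take $\mu=0$ and $e(H)=\theta\binom{n}{3}$ with $\theta$ far below $d$: then every triple has density well below $d$ and $e(R)=0$). This is almost certainly a typo in the paper: the application of the lemma inside the proof of Theorem~\ref{thm1} uses $\delta\ll d\ll\gamma$, with $\gamma$ playing the role of $\theta$, which is precisely the hierarchy your proof assumes. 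Also note the lemma statement never introduces $t_0$, so strictly speaking the ``type (ii)'' loss of order $1/t$ is only controlled because the partition comes from Lemma~\ref{reg} with $t\ge t_0$; a careful statement would make $t_0$ explicit, but this is a standard omission.
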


\section{Large $ Y_{3,2} $-tilings }
%In this section we prove Theorem~\ref{thm1}.
%Recall that $Y$ is the $3$-graph consisting of two edges that intersect in exactly two vertices.   
In this section we deduce Theorem~\ref{thm1} from the following lemma and the regularity method.
Recall that given a $3$-graph $ H $, a $\{Y,E\}$-tiling of $ H $ is a family of vertex-disjoint copies of $Y$ and edges in $H$.
%In order to prove Theorem~\ref{thm1}, we need the following lemma.
\begin{lemma}	\label{lem1}
%For every $  \gamma, \varepsilon\in (0,1] $, $\varepsilon\ll\gamma  $, there exists $ n_{0} $ such that the following holds for integer $n\ge n_0$. 
Suppose $1/n \ll \varepsilon\ll\gamma$.
Let $H$ be a $3$-graph on $n$ vertices. 
Suppose $ \cT $ is a maximum $Y$-tiling in $ H $ and there exists no $\{Y,E\}$-tiling in $ H $ covering at least $ 4|\cT|+\varepsilon n $ vertices, then  
\[e(H)\le \max \left \{ \binom{ 4|\cT|}{3}, \binom{n}{3}-\binom{n- |\cT|}{3} \right \}+\gamma n^3. \]
\end{lemma}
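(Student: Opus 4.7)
My plan is to partition $V(H) = V_1 \cup W$ with $V_1 := V(\cT)$ of size $4t$ (where $t := |\cT|$), and bound $e(H)$ according to how edges intersect $V_1$. Writing $e_i$ for the number of edges with exactly $i$ vertices in $V_1$, one trivially has $e_3 \le \binom{4t}{3}$, so the real work is in bounding $e_0, e_1, e_2$.

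For $e_0$, maximality of $\cT$ forces $H[W]$ to be $Y$-free. Any matching $\M \subseteq H[W]$ together with $\cT$ forms a $\{Y,E\}$-tiling covering $4t + 3|\M|$ vertices, so the hypothesis forces the matching number of $H[W]$ to be less than $\eps n/3$. The Matching Conjecture for $3$-graphs (known) then gives $e_0 \le \max\{\binom{\eps n}{3}, \binom{n-4t}{3}-\binom{n-4t-\eps n/3}{3}\} = O(\eps n^3) \le \gamma n^3/4$ once $\eps \ll \gamma$.

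The crossing edges $e_1 + e_2$ are the delicate part. For each $u \in V_1$ let $L_u$ be the $2$-graph on $W$ with edges $\{w_1,w_2\}$ such that $\{u,w_1,w_2\} \in E(H)$; a cherry (path of length two) in $L_u$ yields a copy of $Y$ on $\{u\}$ together with three vertices of $W$. Swapping the member of $\cT$ containing $u$ for such a $Y$ frees three vertices of $V_1$, which, together with the remaining structure of $H[W]$, allow further augmentation. Iterating this rotation across disjoint members of $\cT$ would produce a $\{Y,E\}$-tiling covering more than $4t + \eps n$ vertices and contradict the hypothesis, forcing each $L_u$ to be essentially a matching; an analogous argument (replacing one of the two edges of a $Y \in \cT$ sharing a kernel pair $\{a,b\}$ with a new edge $\{a,b,w\}$, $w \in W$) constrains high-codegree pairs $\{a,b\} \subseteq V_1$.

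The final step is a dichotomy: either the crossing density is low enough that $e(H) \le \binom{4t}{3}+\gamma n^3$, or a well-chosen representative from each $Y \in \cT$ yields a set $S \subseteq V_1$ of size $t$ hitting all but $\gamma n^3$ edges of $H$, giving $e(H) \le \binom{n}{3}-\binom{n-t}{3}+\gamma n^3$. I expect this last step to be the main obstacle: the natural cover built from the two kernel vertices of each member of $\cT$ has size $2t$, and reducing to $t$ requires exploiting the structure of $\cT$ beyond what the link/codegree bounds alone give. This is presumably where the new Proposition~\ref{prop} enters, classifying suboptimal $Y$-tilings finely enough that the two extremal regimes $\binom{4t}{3}$ and $\binom{n}{3}-\binom{n-t}{3}$ separate cleanly and a valid $t$-cover can be extracted while losing at most $\gamma n^3$ edges.
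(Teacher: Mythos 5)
Your sketch correctly handles $e_0$ (the $Y$-freeness of $H[W]$ together with Frankl--F\"uredi or a matching-number argument suffices), and your rotation/cherry instinct is indeed the germ of Proposition~\ref{prop}: the paper's greedy algorithm repeatedly locates a vertex $v$ of some $Y_i\in\cT$ with $\deg(v,W)\ge\varepsilon' n^2$, removes its three $Y$-mates into $W$, keeps $v$ as a representative in a set $R$, and iterates. But the proposal contains a genuine gap and, moreover, the way you envision the endgame does not match what actually works.

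First, the decomposition you use ($V_1=V(\cT)$ of size $4t$ versus $W$) is not the right one. The paper's output from Proposition~\ref{prop} is a split $\cT=\{Y_1,\dots,Y_{t_1}\}\cup\mathcal Y$ with a representative set $R=\{v_1,\dots,v_{t_1}\}$, $v_i\in V(Y_i)$, and an \emph{enlarged} set $W=U\cup\bigcup_{i\in[t_1]}(V(Y_i)\setminus\{v_i\})$, so $|R|=t_1\le t$ and every vertex of every $Y_j\in\mathcal Y$ has $\deg(\cdot,W)<\varepsilon'n^2$. The bound $y_1\le\binom{n}{3}-\binom{n-t_1}{3}$ on edges meeting $R$ comes for free, and item~(2) of the proposition (that after removing any $O(\varepsilon n)$ vertices from $W$, one can still route a $Y$-tiling of size $t_1$ through $R$) is what legitimizes every local-improvement argument later. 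Your proposal never produces an analogue of this $R$; you only observe that the natural cover has size $2t$ or $4t$ and wave at a reduction to $t$, explicitly admitting ``this is presumably where Proposition~\ref{prop} enters.'' That is the gap: without the algorithmic construction of $R$ and $W$, the rest of the argument does not get off the ground.

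Second, and more substantially, the entire bound on $y_2=e(H-R)$ -- which is where the real work and the bridging between the two extremal regimes happens -- is absent from your sketch. The paper controls $y_2$ by a delicate per-triple analysis on $\binom{\mathcal Y}{3}$: it introduces auxiliary bipartite graphs $G'_P$ recording $YYW$-usable pairs, proves $G'_P$ has no matching of size three (hence $e(G'_P)\le 8$), bounds the bad class $\mathcal D$ via a $K_{2,3}^{+}$ argument in an auxiliary digraph, and then proves (Lemma~\ref{lm2}) that for almost all triples $T$, $f(x_T)\le 64-\tfrac98 x_T$ where $x_T=e(G_T)$ and $f(x_T)=e(Q_T)$, using Fact~\ref{f1} (a stability version of the $k$-partite matching extremal result) and the ordered-3-graph gadget $\mathcal V$. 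The dichotomy you correctly suspect exists is not ``either a $t$-cover hits almost everything or crossing density is low''; it is a dichotomy on the ratio $|W|/(t_2-2)$ compared to $9/8$, and in each branch the final inequalities (6.4) and (6.5) combine $y_1\le\binom{n}{3}-\binom{n-t_1}{3}$ with the corresponding $y_2$ bound to reconstitute $\binom{4t}{3}$ or $\binom{n}{3}-\binom{n-t}{3}$ by a nontrivial identity (in the first case one needs $4t_1t_2(7t_1-2)\ge 0$). None of this is anticipated or replaceable by the link/codegree heuristics you describe; in particular, the ``well-chosen representative from each $Y\in\cT$'' that you hope will hit all but $\gamma n^3$ edges does not exist in general, and $R$ in the paper is emphatically \emph{not} such a cover -- a large fraction of the edges (namely $y_2$) avoid $R$ entirely and must be bounded by a separate counting argument.
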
 
%Theorem~\ref{thm1} can be derived by Lemma~\ref{lem1} by using weak regularity lemma.
Next we observe that every regular triple in which the part sizes are not too different has an almost perfect  $Y$-tiling. 
\begin{prop} \label{p1}
Let $ 0<1/m\ll\delta\ll d $.
Suppose the triple $  (V_1,V_2,V_3) $ is $ (\delta,d) $-regular and $ m=|V_1|\le|V_2|\le|V_3|\le 3|V_1|-|V_2| $, then there exists a $Y$-tiling on $ (V_1,V_2,V_3) $ covering more than $ (1-4\delta)( |V_1|+|V_2|+|V_3|)$ vertices.
\end{prop}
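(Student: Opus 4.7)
The plan is first to solve a small linear system to determine how many copies of each \emph{type} of $Y$ to target, and then to extract these copies one at a time greedily using the regularity of the triple.

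Each copy of $Y$ we use on $(V_1,V_2,V_3)$ must consist of two crossing triples sharing exactly two vertices, so its shared pair lives in two distinct parts and its two unshared vertices lie together in the remaining part. I will say a copy of $Y$ has \emph{Type $i$} if two of its vertices lie in $V_i$ and one vertex lies in each of the other two parts. Writing $a=|V_1|$, $b=|V_2|$, $c=|V_3|$ and letting $x_i$ be the number of Type-$i$ copies I aim for, the consumption in each part is a linear form. Demanding that a $(1-3\delta)$-fraction of each part be covered leads to the system
\begin{align*}
2x_1+x_2+x_3 &= (1-3\delta)a,\\
x_1+2x_2+x_3 &= (1-3\delta)b,\\
x_1+x_2+2x_3 &= (1-3\delta)c,
\end{align*}
whose unique solution is $x_i=(1-3\delta)(3|V_i|-|V_j|-|V_k|)/4$ for $\{i,j,k\}=\{1,2,3\}$. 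Nonnegativity of $x_1$ is precisely the hypothesis $c\le 3a-b$, while $x_2,x_3\ge 0$ follow from $a\le b\le c$; the totals sum to $(1-3\delta)(a+b+c)/4$.

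For the extraction, I would process $\lfloor x_1\rfloor,\lfloor x_2\rfloor,\lfloor x_3\rfloor$ copies in any order. At every intermediate stage the unused parts $V_i'$ satisfy $|V_i'|\ge 3\delta|V_i|\ge \delta|V_i|$, so the induced subtriple inherits density at least $d-\delta$ and therefore contains at least $(d-\delta)|V_1'||V_2'||V_3'|$ crossing edges. Extracting a $Y$ of Type $3$, say, amounts to finding a pair $(u,v)\in V_1'\times V_2'$ with at least two common neighbours in $V_3'$; since the sum of codegrees equals $e(V_1',V_2',V_3')\gg |V_1'||V_2'|$ whenever $m$ is large, such a pair is abundant, and Types $1$ and $2$ are handled by the symmetric averaging. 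The rounding loss is at most three copies, i.e.\ at most $12$ vertices, so the final tiling covers at least $(1-3\delta)(a+b+c)-12$ vertices, which exceeds $(1-4\delta)(a+b+c)$ once $\delta\cdot m$ dominates $12$, an easy consequence of $1/m\ll\delta$.

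The main obstacle is essentially the arithmetic at the start: verifying that the hypothesis $c\le 3a-b$ together with $a\le b\le c$ is exactly the condition needed for all three $x_i$ to be nonnegative, and that the resulting $Y$-tiling covers a $(1-O(\delta))$-fraction of the vertex set. The greedy extraction itself is standard, relying only on the inheritance of $(\delta,d)$-regularity by subsets of relative size at least $\delta$ and on a codegree averaging argument to guarantee a pair with two common neighbours at each step.
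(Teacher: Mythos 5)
Your proposal is correct and matches the paper's proof in all essentials: the same linear system $2x_1+x_2+x_3=|V_1|$, etc., determines the Type-$i$ counts, the regularity of subsets of relative size at least $\delta$ gives density at least $d-\delta$, and the same codegree-averaging argument produces each copy of $Y$ greedily. The only bookkeeping difference is that you preemptively shrink the targets by a factor $(1-3\delta)$ so that every intermediate part stays above $3\delta|V_i|$ regardless of order, whereas the paper works with the unshrunk $x_i$, extracts $\lfloor x_1\rfloor$ then $\lfloor x_2\rfloor$ copies, and then runs the Type-$3$ batch until fewer than $2\delta|V_3|$ vertices remain in $V_3$; both variants yield coverage exceeding $(1-4\delta)(|V_1|+|V_2|+|V_3|)$.
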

\begin{proof} 
%Suppose that $ 0<1/m\ll\delta\ll d $.
Let $ x_1=(3|V_1|-|V_2|-|V_3|)/4 $, $ x_2=(3|V_2|-|V_1|-|V_3|)/4 $ and  $ x_3=(3|V_3|-|V_1|-|V_2|)/4 $.
Then  $ 2x_1+x_2+x_3=|V_1| $,  $ x_1+2x_2+x_3=|V_2| $,  $ x_1+x_2+2x_3=|V_3| $ and $ 4(x_1+x_2+x_3)=|V_1|+|V_2|+|V_3| $.
Note that $0\le x_1\le x_2\le x_3$ and $ x_3=(3|V_3|-|V_1|-|V_2|)/4 \ge |V_1|/4=m/4 $.
	%For each $ p\in\{1,2,3\} $, let $ V_p=\sqcup_{s\in[3]}V_{p}^{s} $ with 
	%\[(|V_{1}^{1}|,|V_{1}^{2}|,|V_{1}^{3}|)=(2x_1,x_2,x_3)\]  
	%\[ (|V_{2}^{1}|,|V_{2}^{2}|,|V_{2}^{3}|)=(x_1,2x_2,x_3)\]   
	%\[ (|V_{3}^{1}|,|V_{3}^{2}|,|V_{3}^{3}|)=(x_1,x_2,2x_3).\]
We will construct a $Y$-tiling $M$  in $H[V_{1},V_{2},V_{3}]$ covering more than $ (1-4\delta)( |V_1|+|V_2|+|V_3|)$ vertices, by greedily finding copies of $Y$ using the regularity.	
	%Without loss of generality, suppose $ x_1>0 $.
	%Note that $ |V_{1}^{1}|=2|V_{2}^{1}|=2|V_{3}^{1}|=2x_1 $.
	
Let $ H':=H[ V_1',V_2', V_3'] $, where $ V_i'\subseteq V_{i} $ with $ |V_i'|\ge \delta |V_i| $ for each $ i\in [3] $.
Since $(V_1,V_2,V_3) $ is $(\delta,d) $-regular, the triple $ (V_1', V_2', V_3') $ has density at least $ d-\delta $, that is, $ e(H')\ge (d-\delta) |V_1'||V_2'|| V_3'| $.
In particular, we can pick $ v_2\in V_i' $ and $ v_3\in V_j' $ for $ i\not=j, i,j\in[3] $ with $ \deg_{H'}(\{v_2,v_3\})\ge2 $, as otherwise $ e(H')\le2 |V_i'|| V_j'| $, which is a contradiction.
Then we choose $ v_1,v_1'\in V_p' $, where $ p\in [3]\setminus\{i,j\} $, such that $ \{v_1,v_2,v_3\}\in E(H')$ and $\{v_1',v_2,v_3\}\in E(H') $, giving a copy of $ Y $.
	
Thus it is possible to find a $Y$-tiling $M_1$ of size $\lfloor x_1\rfloor$ in $ H[ V_1,V_2, V_3] $, each of which intersects each of $V_{2}$ and $V_{3}$ in one vertex and $V_{1}$ in two vertices, and a $Y$-tiling $M_2$ of size $\lfloor x_2\rfloor$ in $ H[ V_1,V_2, V_3]\setminus M_1 $, each member of which intersects each of $V_{1}$ and $V_{3}$ in one vertex and $V_{2}$ in two vertices.
We repeatedly find disjoint copies of  $Y$ in $ H[ V_1,V_2, V_3]\setminus (M_1\cup M_2) $ each of which intersects each of $V_{1}$ and $V_{2}$ in one vertex and $V_{3}$ in two vertices, until fewer than $ 2\delta |V_3| $ vertices are left in $V_3$.
So when the procedure terminates, we obtain a $Y$-tiling denoted by $M_3$ in $ H[ V_1,V_2, V_3]\setminus (M_1\cup M_2) $, such that there are fewer than  $ \delta |V_3| $ vertices left in $V_1$ and $V_2$ respectively.
As a result, $ M_1\cup M_2 \cup M_3 $ is a $Y$-tiling on $ (V_1,V_2,V_3) $ covering more than $ 4\lfloor x_1\rfloor+4\lfloor x_2\rfloor+4\lfloor x_3\rfloor-4\delta|V_3|\ge 4(x_1+x_2+x_3)-12-4\delta|V_3|\ge (1-4\delta)( |V_1|+|V_2|+|V_3|)$ vertices.
\end{proof}

Fractional homomorphic tilings were first defined by Bu\ss, H\`an and Schacht \cite{Bu2013Minimum}.
In the proof of Theorem~\ref{thm1}, we view $\{Y,E\}$-tilings as (special) fractional hom($Y$)-tilings, defined as follows.
%It is easy to convert a fractional $Y$-tiling to an integer $Y$-tiling. 
%We will find a suitable fractional hom($Y$)-tiling in the reduced graph.
\begin{defi}	
Given a $3$-graph $H$, a fractional hom($Y$)-tiling in $H$ is a function $ h: V(H)\times E(H)\to [0,1] $ satisfying the following properties:
\begin{enumerate} \label{hom}
\item \label{hom1}if $ v\notin e $, then $ h(v,e)=0 $, 
\item \label{hom2} for every $ v\in V(H) $, $ h(v)=\sum_{e\in E(H)}h(v,e)\le1 $,
\item \label{hom3} for every $ e\in E(H) $, there exists a labeling of the vertices of $ e=uvw $ such that  \[h(u,e)\le h(v,e)\le h(w,e)\le3h(u,e)-h(v,e).\]
\end{enumerate}
We denote the smallest non-zero value of $ h $ by $ h_{\min} $ and 
the  weight of $ h $ is the sum over all values:
\[w(h):=\sum_{(v,e)\in V(H)\times E(H)}h(v,e).\]		
\end{defi} 
Note that  an  integer $Y$-tiling is a special fractional hom($Y$)-tiling. 
Given a  $Y$-tiling $\mathcal{F} $ in a $3$-graph $H$, we define a function $ h: V(H)\times E(H)\to  \{0,1/2, 1\} $ such that for any $ v\in V(H)$ and $ e\in E(H)  $, 
\[\label{frac}h(v, e) :=\begin{cases}
	1 & \text{if $ v\in e\subseteq Y\in \mathcal{F} $ and $ \deg_{Y}(v)=1 $,} \\
	1/2 & \text{if  $ v\in e\subseteq Y\in \mathcal{F} $ and $ \deg_{Y}(v)=2 $,}\\
	0& \text{otherwise.} \tag{3.1}
\end{cases}\]
It is a simple fact that $ h $ is a  fractional hom($Y$)-tiling in $H$. 

The following proposition allows us to convert a fractional hom($Y$)-tiling to an  integer $Y$-tiling. Its proof is an application of the regularity method.
\begin{prop} \label{ftoi}
Suppose  $ 1/n \ll \delta\ll d\ll\varepsilon,\alpha,b, 1/T_0$.	
Let $H$ be a $3$-graph on $n$ vertices and $\mathcal{Q} $ be a $ \delta $-regular $ t $-partition  of $ H $ with $  t \le T_0$.
Let $R:=R(\delta,d,\mathcal{Q})$ be the reduced hypergraph.  
Suppose  $R$ contains a fractional hom$(Y)$-tiling $ h $ with $ h_{\min}=b $ and $w(h)\ge 4(\alpha+\varepsilon) |V(R)| $. 
Then $H$ contains an integer $Y$-tiling of size more than $ \alpha n$.
\end{prop}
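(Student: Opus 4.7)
The plan is to convert the fractional tiling $h$ into an actual $Y$-tiling by treating $h(i,e)$ as the fraction of cluster $V_i$ to dedicate to copies of $Y$ supported on the triple of clusters indexed by $e$, and then invoking Proposition~\ref{p1} on each edge of the reduced graph $R$ separately. Concretely, fix the $\delta$-regular partition $\mathcal{Q}$ with clusters $V_1,\dots,V_t$ of common size $m$ and $|V_0|\le \delta n$. For each $i\in [t]$, use condition~(\ref{hom2}) to partition $V_i$ into pairwise disjoint sets $\{V_i^e : i\in e\in E(R)\}$ with $|V_i^e|=\lfloor h(i,e)\,m\rfloor$, discarding at most $(1-h(i))m+|E(R)|$ leftover vertices. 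Because $h_{\min}=b\gg\delta$, every nonempty $V_i^e$ has size at least $(b/2)m$.

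Next, for each edge $e=\{i_1,i_2,i_3\}\in E(R)$, label its vertices via condition~(\ref{hom3}) so that $h(i_1,e)\le h(i_2,e)\le h(i_3,e)\le 3h(i_1,e)-h(i_2,e)$. After shaving at most one vertex off $V_{i_3}^e$ to absorb rounding, the sub-cluster sizes inherit the chain $|V_{i_1}^e|\le |V_{i_2}^e|\le |V_{i_3}^e|\le 3|V_{i_1}^e|-|V_{i_2}^e|$. Since the original triple $(V_{i_1},V_{i_2},V_{i_3})$ is $(\delta,d)$-regular and each sub-cluster has relative size at least $b/2$, the restricted triple $(V_{i_1}^e,V_{i_2}^e,V_{i_3}^e)$ is $(2\delta/b,d)$-regular, and the hierarchy gives $2\delta/b\ll d$. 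Proposition~\ref{p1} then yields a $Y$-tiling on $(V_{i_1}^e,V_{i_2}^e,V_{i_3}^e)$ covering more than $(1-8\delta/b)(|V_{i_1}^e|+|V_{i_2}^e|+|V_{i_3}^e|)$ vertices, and the tilings associated to different edges $e$ are automatically vertex-disjoint because the reservations $V_i^e$ were taken disjointly inside each $V_i$.

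Summing over $e\in E(R)$, the resulting integer $Y$-tiling covers at least
\[(1-8\delta/b)\brac{w(h)\cdot m-3|E(R)|}\ge (1-8\delta/b)\brac{4(\alpha+\varepsilon)(1-\delta)n-3T_0^3}\]
vertices, which strictly exceeds $4\alpha n$ by the hierarchy $1/n\ll\delta\ll d\ll \varepsilon,\alpha,b,1/T_0$; dividing by $4$ gives more than $\alpha n$ copies of $Y$. The main obstacle, and the only subtlety, is a bookkeeping one: three different sources of loss, namely integer rounding in $\lfloor h(i,e)\,m\rfloor$, the degraded regularity parameter $2\delta/b$ on sub-clusters, and the $4\delta'$ slack coming from Proposition~\ref{p1}, must each be of size $o(\varepsilon n)$ so that the buffer of $4\varepsilon n$ vertices built into the assumption $w(h)\ge 4(\alpha+\varepsilon)|V(R)|$ is preserved. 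The hypothesis $h_{\min}=b\gg\delta$ is precisely what makes the regularity inheritance step go through, and the rest is then purely quantitative.
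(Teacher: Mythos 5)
Your proposal is correct and follows essentially the same route as the paper: subdivide each cluster $V_i$ according to the weights $h(V_i,e)$, observe that $h_{\min}=b\gg\delta$ preserves regularity on the sub-clusters, and then invoke Proposition~\ref{p1} edge-by-edge, finally summing the covered vertices against $w(h)$. The only difference is that you are somewhat more explicit about the integer-rounding losses (floors and the need to shave a few vertices to keep the chain inequality), which the paper glosses over by writing $|U_{V_i}^e|=h(V_i,e)m$; this extra bookkeeping is harmless and absorbed by the $\varepsilon$-slack in exactly the way you describe.
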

\begin{proof}	
Suppose that  $1/n\ll \delta\ll d\ll\varepsilon,\alpha,b, 1/T_0$. 
Let $H$ be a $3$-graph on $n$ vertices and $R:=R(\delta,d,\mathcal{Q})$ be the reduced hypergraph of $ H $ with respect to the partition  $ \mathcal{Q}:=\{V_0, V_1, \dots , V_t\} $, where $t\le T_0$. 
Let $ m:=(n-|V_0|)/t\ge(n-\delta n)/t $ be the size of each $ V_i, 1\le i\le t $.	
Suppose  $R$ contains a  fractional hom($Y$)-tiling $ h $ : $V(R)\times E(R)\to [0,1]$ such that $ w(h)\ge 4(\alpha+\varepsilon) t$. 

For each $ V_i $, $ i\in[t] $, by (\ref{hom2}) in Definition~\ref{hom}, $ V_i $ can be subdivided into a collection of pairwise disjoint sets $ (U_{i}^{e})_{V_i\in e\in E(R)} $ of size $|U_{i}^{e}|=h(V_i,e)m$.	
We suppress floors and ceilings, as they will be absorbed by the error terms.
For any edge $ e=V_{i_1}V_{i_2}V_{i_3}\in E(R) $, the triple $(V_{i_1},V_{i_2},V_{i_3})$ is $(\delta,d)$-regular.
By  $ h_{\min}=b $, we get that $( U_{i_1}^{e},U_{i_2}^{e},U_{i_3}^{e}) $ is $(\delta/b,d)$-regular.
Using property~(\ref{hom3}) in Definition~\ref{hom} and Proposition~\ref{p1} to $ (U_{i_1}^{e},U_{i_2}^{e},U_{i_3}^{e}) $, we  obtain a $Y$-tiling covering more than $ (1-4\delta) (h(V_{i_1},e)+h(V_{i_2},e)+h(V_{i_3},e))m $ vertices. 
Applying this to all edges of $ R $, we get a $Y$-tiling covering more than
\[
\sum_{e=V_iV_jV_k \in R } (1-4\delta) (h(V_{i},e)+h(V_{j},e)+h(V_{k},e))m\ge(1-4\delta)w(h)m\ge
(1-4\delta)4(\alpha+\varepsilon) tm > 4\alpha n
\]
vertices, where we used $  \delta\ll\varepsilon $, that is, the rounding errors can be absorbed by the error terms.
\end{proof}

For a $ k $-graph $ F $ on $ n $ vertices $ \{v_1,\dots v_n\} $ and $ b>0 $, we say $ F\{b\} $ is the  \emph{$ b $-blow-up} of  $ F $ if there exists an ordered partition $ (V_1,\dots,V_n) $ of $ V(F\{b\}) $ such that $ |V_1|=\dots=|V_n|=b$ and we have that $ u_1u_2\dots u_k\in E(F\{b\}) $ if and only if $ u_{i}\in V_{j_{i}} $, $ i\in[k] $ for some $v_{j_{1}}v_{j_{2}}\dots v_{j_{k}}\in E(F) $.
In this case, we say that each edge $ u_1u_2\dots u_k $ is a clone of $ v_{j_{1}}v_{j_{2}}\dots v_{j_{k}} $ and each vertex of $ V_i $ is a clone of~$ v_i $.

\begin{proof}[Proof of Theorem~\ref{thm1}]	
Choose constants such that	\[1/n\ll \delta\ll d\ll\varepsilon'  \ll \varepsilon \ll \gamma \ll \alpha <1 /4,1/T_0.\]
Let $H$ be a $3$-graph on $n$ vertices with % the edge condition \eqref{ec}.	
\[\label{ec} e(H)\ge \max \left \{ \binom{4\alpha n}{3}, \binom{n}{3}-\binom{n-\alpha n}{3} \right \}+\gamma n^3.\]
Suppose $H$ contains no $Y$-tiling covering more than $ 4\alpha n$ vertices. 	
Apply  Lemma~\ref{Rdeg} to $H$, and let $V_{1}, \dots,V_{t}$ be the clusters of the partition $ \mathcal{Q} $ obtained with  $t\le T_0$. 
Let $R:=R(\delta,d,\mathcal{Q})$ be the reduced $3$-graph on these clusters.
If $ R $ contains  a fractional hom($Y$)-tiling $ h $ : $V(R)\times E(R)\to [0,1]$ such that $ w(h)\ge 4(\alpha+\varepsilon') t$, which can be achieved in Claim~\ref{fh}, then by Proposition~\ref{ftoi},  $H$ contains an integer $Y$-tiling covering more than $ 4\alpha n$ vertices, which is a contradiction.
So we may assume that $ R $ does not contain a fractional hom($Y$)-tiling of weight at least $ 4(\alpha+\varepsilon') t  $.
In particular, by the function \eqref{frac}, $ R $  contains no integer $Y$-tiling of size at least $ \alpha t+\varepsilon' t$.
The following claim uses Lemma~\ref{lem1} to build a large $ Y $-tiling in a large blow-up of $ R $. 
This trick was used in \cite{Su}.
%\begin{claim*} \label{c2}

\noindent\textbf{Claim.} There exists  $ 1\le j\le 1/\varepsilon  $ such that 	$ R\{4^j\} $  contains a $Y$-tiling of size at least $ (\alpha+\varepsilon' ) 4^{j}t $. 
%\end{claim*}	
\begin{proof}[Proof of claim]
Suppose $ \cT_0 $ is a maximum $Y$-tiling in $  R $ with $ |\cT_0|=\alpha_{0} t $. 
Then $ \alpha_{0}  <\alpha +\varepsilon'  $.
Let $ p:=\lfloor 1/\varepsilon\rfloor $ and $ \alpha_{i}:=\alpha_{0}+i\varepsilon $ for $ i\in[p].$
%Let  $ p:=\lceil 4(\alpha+\varepsilon'-\alpha')/\varepsilon \rceil $. Note that $ p\le 4/\varepsilon $. 
By Lemma~\ref{Rdeg},  	
\[
e( R)\ge\max \left \{ \binom{4\alpha t}{3}, \binom{t}{3}-\binom{t-\alpha t}{3} \right \}+\frac{\gamma t^3}{4}. 
\] 
%Then using Lemma~\ref{lem1}  with $ \gamma/4 $ in place of $ \gamma $, we get a $\{Y,E\}$-tiling in $ R $ covering $ (4\alpha_0+\varepsilon) t $ vertices. 
%So $ R\{4\} $  contains a maximum  $Y$-tiling $ T^{(1)} $ of size at least $ (4\alpha'+\varepsilon) t $ (see Figure~\ref{figure:1}). 
We prove that there exists  $ 1\le j\le p $ such that $ R\{4^j\} $  contains a $Y$-tiling of size at least $ (\alpha+\varepsilon' ) 4^{j}t $ by the following procedure.
Indeed, suppose for $ i\in\{0,1,2,\dots,p\} $, 
$  R\{4^{i}\} $  contains a maximum  $Y$-tiling $ \cT_{i} $ of size at least $ \alpha_{i}4^{i} t $.
If  $ |\cT_{i}|\ge(\alpha +\varepsilon')4^{i}t $, then we are done by letting $ j=i $.
Otherwise,  $ |\cT_{i}|<(\alpha +\varepsilon')4^{i}t $.
Since for $ x\in \mathbb{N} $, $ \binom{xt}{3}-x^3\binom{t}{3}=O(t^{2}) $ and $ \varepsilon' \ll \gamma $, we have
\[
\begin{aligned}
e( R\{4^{i}\})=(4^{i})^{3}e(R)&\ge\max \left \{ \binom{4(\alpha+\varepsilon') 4^{i}t}{3}, \binom{4^{i}t}{3}-\binom{4^{i}t-(\alpha+\varepsilon')4^{i}t}{3} \right \}+\frac{\gamma (4^{i}t)^3}{5}\\&>\max \left \{ \binom{4|\cT_{i}|}{3}, \binom{4^{i}t}{3}-\binom{4^{i}t-|\cT_{i}|}{3} \right \}+\frac{\gamma (4^{i}t)^3}{5}.
\end{aligned}
\]
Using Lemma~\ref{lem1} with $ \gamma/5 $ in place of $ \gamma $ and $  4\varepsilon $ in place of $ \varepsilon $, we get a $\{Y,E\}$-tiling in $ R\{4^{i}\} $ covering at least $ 4\alpha_{i}4^{i} t+4\varepsilon4^{i} t=\alpha_{i+1} 4^{i+1}t $ vertices. 
Because the 4-blow-up of an edge contains a perfect $ Y $-tiling, we can construct in  $ R\{4^{i+1}\} $ a $Y$-tiling of size no less than $ \alpha_{i+1} 4^{i+1}t $.
%Do this procedure iteratively until we  find  $ 1\le j\le p $ such that $ R\{4^j\} $  contains a $Y$-tiling of size at least $ (\alpha+\varepsilon' ) 4^{j}t $.
The existence of such $ j $ is guaranteed by 
$ \alpha_{p} 4^{p}t=(\alpha_{0}+p\varepsilon) 4^{p}t\ge(\alpha+\varepsilon')4^{p}t $.
\end{proof}

Fix $ 1\le j\le 1/\varepsilon  $ as in the claim above such that $ R\{4^j\} $  contains a $Y$-tiling of size at least $ (\alpha+\varepsilon' ) 4^{j}t $. 
We observe the following claim:
\begin{claim} \label{fh}
$ R $ contains a fractional hom$ (Y) $-tiling $ h $ with $ h_{\min}\ge 1/4^j $ and $w(h)\ge 4(\alpha+\varepsilon')t $. 
\end{claim}
\begin{proof}
Suppose $ \mathcal{F} $	is a $Y$-tiling of size at least $ (\alpha+\varepsilon' ) 4^{j}t $ in $ R\{4^j\} $.
Let  $  \mathcal{F}' $ be the collection of edges $ e \in R\{4^j\} $ such that there exists $ e_1\in E(R\{4^j\}) $ together with $ e $ forming a member of $ \mathcal{F} $.
Note that $ |\mathcal{F}'|=2|\mathcal{F}|\ge2(\alpha+\varepsilon' ) 4^{j}t $.
Define functions $ f:E(R\{4^j\})\to E(R) $: for any $ e\in E(R\{4^j\}) $, let $ f(e):=L $ such that $ e $ is a clone of $ L $, and  $ g:V(R\{4^j\})\to V(R) $: for any $ v'\in V(R\{4^j\}) $, let $ g(v'):=v $ such that $ v' $ is a clone of $ v $.
Define another function $ w': V(R\{4^j\}) \times E(R\{4^j\}) \to \{0,1/2, 1\} $ such that for any $ v'\in V(R\{4^j\})$ and $ e\in E(R\{4^j\})  $, 
\[w'(v', e) :=\begin{cases}
	1 & \text{if $ v'\in e\subseteq Y_1\in \mathcal{F} $ and $ \deg_{Y_1}(v')=1 $,} \\
	1/2 & \text{if  $ v'\in e\subseteq Y_1\in \mathcal{F} $ and $ \deg_{Y_1}(v')=2 $,}\\
	0& \text{otherwise.}
\end{cases}\]

We will build a function $ w:\mathcal{F'}\times E(R)\times V(R\{4^j\})\to \left\{\frac{1}{4^j},\frac{1}{2\times4^j},0\right\} $. 
For any $ e\in \mathcal{F'}$, $ L\in E(R)  $ and $ v'\in V(R\{4^j\}) $, let 
\[w(e, L,v') :=\begin{cases}
	\frac{1}{4^j} & \text{if $  f(e)=L $ and $ w'(v', e)=1 $,} \\
	\frac{1}{2\times4^j} & \text{if  $ f(e)=L $ and $ w'(v', e)=1/2 $,}\\
	0& \text{otherwise.}
\end{cases}\]

Now we show that $ w $ gives rise to a  fractional hom($Y$)-tiling $ h $ with $  h_{\min}\ge 1/4^j  $ and $w(h)\ge 4(\alpha+\varepsilon')t $.
For any $ L\in E(R) $ and $ v\in V(R) $, we assign the weights 
\[ h(v,L):=\sum_{v':g(v')=v}\sum_{e\in \mathcal{F'}}w(e, L,v').\] 
Note that if $ v\notin L $, then $ h(v, L)=0 $. 
Indeed, suppose $ v\notin L $,  $ v'\in V(R\{4^j\}) $ with $ g(v')=v $ and $ e\in \mathcal{F'} $.
If $ f(e)\not=L $, then $ w(e, L,v')=0 $. 
Otherwise $ f(e)=L $, and we get that $ v'\notin e $.
So  $ w'(v',e)=0 $, implying $ w(e, L,v')=0 $. 
Thus $ w(e, L,v')=0 $ holds for any $ v'$ with $ g(v')=v $ and $ e\in \mathcal{F'} $, implying $ h(v,L)=0 $.
For every $ v\in V(R) $, we have  \[h(v):=\sum_{L\in E(R)}h(v,L)=\sum_{L\in E(R)}\sum_{v':g(v')=v}\sum_{e\in \mathcal{F'}}w(e, L,v')\le\frac{1}{4^j}\times 4^j=1 .\] 
Indeed, for any $ v' $ with $ g(v')=v $, if $ v' $ is not covered by $ \mathcal{F} $, then $  \sum_{L\in E(R)}\sum_{e\in \mathcal{F'}}w(e, L,v')=0 $. 
Otherwise suppose $ v'\in V(Y_1) $ with $ Y_1\in\mathcal{F} $ consisting of $ e_1, e_2 \in E(R\{4^j\}) $. 
Thus we have
\[ \sum_{L\in E(R)}\sum_{e\in \mathcal{F'}}w(e, L,v')=w(e_1, f(e_1),v')+w(e_2, f(e_2),v')=\frac{1}{4^j}.\]
So  $  h_{\min}\ge 1/4^j $.
Summing over all $ 4^j $ possible $ v' $ with $ g(v')=v $, we obtain $ h(v)\le 1 $.

For an arbitrary edge $ L\in E(R) $, by the definition of $ w $, there exists a labeling of the vertices of $ L=uvw $ such that $ ( h(u,L), h(v,L), h(w,L))=s_1\left(\frac{1}{4^j},\frac{1}{2\times4^j},\frac{1}{2\times4^j}\right)+s_2\left(\frac{1}{2\times4^j},\frac{1}{4^j},\frac{1}{2\times4^j}\right)+s_3\left(\frac{1}{2\times4^j},\frac{1}{2\times4^j},\frac{1}{4^j}\right) $, where $ 0\le s_1\le s_2\le s_3 \le4^j $ are integers.
Note that $ h(u,L)=\frac{2s_1+s_2+s_3}{2\times4^j} $, $ h(v,L)=\frac{s_1+2s_2+s_3}{2\times4^j} $,
$ h(w,L)=\frac{s_1+s_2+2s_3}{2\times4^j} $ and $ 3h(u,L)-h(v,L)- h(w,L)=\frac{2s_1}{4^j}$.
Thus $ h(u,L)\le h(v,L)\le h(w,L)\le3h(u,L)-h(v,L) $.
%since the $Y$-tiling $ \mathcal{F} $ in $ R(4^j) $ is coming from a  $\{Y,E\}$-tiling in $ R(4^r{j-1}) $. 

The total weight $ w(h) $ is 
\[
\begin{aligned}  
	w(h)&:=\sum_{(v,L)\in V(R)\times E(R)}h(v,L)\\
	&= \sum_{e\in \mathcal{F'}}\sum_{v\in V(R)}\sum_{v':g(v')=v}\sum_{L\in E(R)}w(e, L,v')\\
	&\ge 2(\alpha+\varepsilon' ) 4^{j}t\times \frac{2}{4^{j}}\ge 4(\alpha+\varepsilon' )t,
\end{aligned}
\]		
where we used that for any 	$ e\in \mathcal{F'} $, \[\sum_{v\in V(R)}\sum_{v':g(v')=v}\sum_{L\in E(R)}w(e, L,v')=\sum_{v'\in e}w(e, f(e),v')=\frac{1}{2\times4^j}+\frac{1}{2\times4^j}+\frac{1}{4^j}=\frac{2}{4^{j}} \]
and $ |\mathcal{F'}|\ge 2(\alpha+\eps')4^{j}t $.
\end{proof}
Claim~\ref{fh} contradicts our assumption that $ R $ does not contain a fractional hom($Y$)-tiling of weight at least $ 4(\alpha+\varepsilon') t  $. 
This contradiction concludes the proof.
%So $H$ contains an integer $Y$-tiling covering more than $ 4\alpha n$ vertices.	
\end{proof}

%	In our proof we will use constants satisfying the hierarchy
%\section{Proof of Lemma~\ref{lem1} }
\section{Large $\{ Y,E\} $-tilings }
It remains to prove Lemma~\ref{lem1}, which is the goal of this section.
% we prove Lemma~\ref{lem1}.
The following result gives an upper bound on the number of edges in $ k $-graphs with no copy of $ Y_{k, b} $.
\begin{thm}\cite{Erd1974Intersection,Peter1985Forbidding}  \label{FrFu}
	For $k > b \ge 0$, there exists an integer $n_k$ such that for any $n \ge n_k$, 	${\rm ex}(n,Y_{k, b})\le \binom{n-1}{k-1}$.
	%${\rm ex}(n,Y_{k, b})=\max\{e(H):|V(H)|=n, Y_{k, b}\nsubseteq H\}  	\le \binom{n-1}{k-1}$.
\end{thm}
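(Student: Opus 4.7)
The plan is to split on $b$ and reduce both cases to the Erd\H{o}s--Ko--Rado theorem on intersecting uniform families.

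When $b = 0$, observe that $Y_{k,0}$ is a pair of disjoint edges, so a $Y_{k,0}$-free $k$-graph on $n$ vertices is precisely an intersecting $k$-uniform family. The classical Erd\H{o}s--Ko--Rado theorem then yields $e(H) \le \binom{n-1}{k-1}$ as soon as $n \ge 2k$, which is the desired bound.

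For $b \ge 1$, I would pass to links. For each $B \in \binom{V(H)}{b}$, define
\[
L(B) := \bigl\{ S \in \binom{V(H) \setminus B}{k-b} : B \cup S \in E(H) \bigr\}.
\]
The key observation is that $L(B)$ is an intersecting $(k-b)$-uniform family on $n-b$ vertices: if two members $S, S' \in L(B)$ were disjoint, then the edges $B \cup S$ and $B \cup S'$ of $H$ would share \emph{exactly} the $b$ vertices of $B$, yielding a forbidden copy of $Y_{k,b}$. Applying Erd\H{o}s--Ko--Rado to each $L(B)$, which is valid once $n - b \ge 2(k-b)$, gives $|L(B)| \le \binom{n-b-1}{k-b-1}$.

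To finish, I would double-count pairs $(B,e)$ with $|B| = b$ and $B \subseteq e \in E(H)$. Since every edge $e$ contains exactly $\binom{k}{b}$ such $b$-sets,
\[
\binom{k}{b}\, e(H) \;=\; \sum_{B \in \binom{V(H)}{b}} |L(B)| \;\le\; \binom{n}{b}\binom{n-b-1}{k-b-1}.
\]
A routine manipulation rewrites the right-hand side as $\frac{n(k-b)}{(n-b)k}\binom{n-1}{k-1}$, and for $b \ge 1$ the prefactor is at most $1$ once $n \ge k$. This yields the desired inequality. Taking $n_k := 2k$ suffices to legitimize Erd\H{o}s--Ko--Rado for every link and to ensure the final prefactor is at most $1$; these are mild linear conditions in $k$, and I do not anticipate a substantive obstacle. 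The whole argument is essentially a one-step reduction to Erd\H{o}s--Ko--Rado combined with a double count.
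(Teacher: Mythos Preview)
The paper does not prove this result; it is quoted from the cited references and used as a black box. Your argument is a correct self-contained proof of the stated inequality, and in fact for $b\ge 1$ it yields the sharper bound $e(H)\le \dfrac{n(k-b)}{k(n-b)}\binom{n-1}{k-1}$. One cosmetic slip: what equals $\dfrac{n(k-b)}{(n-b)k}\binom{n-1}{k-1}$ is the right-hand side of your display \emph{after} dividing through by $\binom{k}{b}$, not the displayed right-hand side itself; the conclusion for $e(H)$ is unaffected. With $n_k=2k$ all prerequisites are met (Erd\H{o}s--Ko--Rado on each link needs $n-b\ge 2(k-b)$, and the prefactor bound needs $n\ge k$), so the proof goes through as written.
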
 

Here is an overview of the proof of Lemma~\ref{lem1}, which says that given a maximum $Y$-tiling in $H$, if one cannot find a significantly larger $\{Y, E\}$-tiling in $H$ (i.e., covering significantly more vertices), then we can give an upper bound on $e(H)$.
The strategy is to show that given a maximum $Y$-tiling $ \cT $ and the set of uncovered vertices $U$, there cannot exist many local configurations each of which allows us to find a $\{Y, E\}$-tiling on the union of $U$ and several (typically, three) copies of $Y$ in $\cT$ that covers \emph{more} vertices than $\cT$ does.
As mentioned in the introduction, a key ingredient in the proof is the algorithm in Proposition~\ref{prop}, which allows us to create extra flexibilities, that is, we can work with an extended version of $U$ (the set $W$ in Proposition~\ref{prop}).
We then reduce the proof to the inequality~\eqref{1.4}, which will be further reduced to Lemma~\ref{lm2}.
The rest of the proofs build on dealing with different kinds of forbidden structures.
The complexity comes from the fact that for each configuration, we have to exhibit an $ \Omega(n)$ number of vertex-disjoint copies of it.
For this we use other auxiliary structures, such as, digraphs and oriented $3$-graphs.

%Now we prove Lemma~\ref{lem1}.

\begin{proof}[Proof of Lemma~\ref{lem1}]
Suppose we have the constants satisfying the following hierarchy
\[1/n\ll \varepsilon \ll \varepsilon'' \ll \varepsilon' \ll  \gamma.\]
Let $ \cT $ be  a maximum $Y$-tiling in $ H $ and suppose there exists no $\{Y,E\}$-tiling in $ H $ covering $ 4|\cT|+\varepsilon n $ vertices. 
%	By adding edges to $H$ if necessary, we may assume $|\cT|=\alpha n$. 
We denote by $U$ the set of vertices not covered by $\cT$. %Obviously, there are no edges in $U$ and $|U|=\frac{3}{7}n.$ Thus we have $|U|\ge 3m_1$ and $|U|\ge \frac{9}{4}m_2$.

\begin{prop} \label{prop}
There exist $ t_1, t_2\in \mathbb N $ with $ t_1+t_2=|\cT|$, a labelling of the members of $ \cT $ as $ {Y_1, Y_2, \dots, Y_{t_1+t_2}} $ and a set $ R $ of size $ t_1 $ with $ |R\cap V(Y_{i})|=1 $ for each $ i\in[t_1] $, such that the following holds. Let $W :=U\cup \left(\bigcup_{i\in[t_1]}V(Y_i)\setminus R\right)$.
\begin{enumerate}
\item  For each $ i\in\{t_1+1,t_1+2,\dots,t_1+t_2\}  $ and $ v\in V(Y_{i})$, we have $\deg(v,W)<\varepsilon'n^{2}  $.
\item 	For any $ W'\subseteq W $ with $ |W'|\le 10\varepsilon n $, there exists a $Y$-tiling in $ H[R\cup(W\setminus W')] $ of size $t_1$.
\end{enumerate}
\end{prop}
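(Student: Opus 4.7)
The plan is to construct $R$, $W$, and the labeling of $\cT$ through a greedy iterative procedure. Initialize $W := U$ and $t_1 := 0$, with every member of $\cT$ starting in the second group. At each step, if there exist some $Y_j$ in the current second group and some vertex $v \in V(Y_j)$ with $\deg(v,W) \geq \varepsilon' n^2$, promote $Y_j$ to the first group by relabeling it as $Y_{t_1+1}$, setting $v_{t_1+1} := v$, appending $V(Y_j) \setminus \{v\}$ to $W$, and incrementing $t_1$. The procedure halts when no such pair $(Y_j,v)$ exists, at which point property~(1) follows directly from the termination condition.

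For property~(2), observe that since $W$ only grows during the procedure, every $v_i \in R$ satisfies $\deg(v_i, W) \geq \varepsilon' n^2$ with respect to the final $W$. A Cauchy--Schwarz count on the link graph of $v_i$ restricted to $W$ (which has at least $\varepsilon' n^2$ edges on at most $n$ vertices) produces $\Omega(\varepsilon'^2 n^3)$ triples $\{a,b,c\}\subseteq W$ such that $\{v_i,a,b,c\}$ spans a copy of $Y$, with $v_i$ playing the role of one of the two central vertices. Thus, although the ``default'' tiling $\{Y_1,\dots,Y_{t_1}\}$ itself lies in $H[R\cup W]$ and handles the case $W' = \emptyset$, each damaged $Y_i$ (one whose three vertices in $W$ meet $W'$) has a large reservoir of alternative $Y$-copies at $v_i$ available for replacement, with the surviving intact $Y_j$'s remaining unchanged.

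The main obstacle is arranging the replacements to be pairwise vertex-disjoint and to simultaneously avoid $W'$ and the vertices already used by the surviving intact tiles. A naive vertex-by-vertex greedy is delicate, since the $3t_1 = O(n)$ vertices occupied by intact tiles can in principle spoil most of the $\Omega(\varepsilon'^2 n^3)$ candidate $Y$'s at a particular $v_i$. To handle this I would strengthen the promotion criterion in the iterative procedure: rather than only demanding $\deg(v, W) \geq \varepsilon' n^2$, I would also require that the reservoir of candidate $Y$'s at $v$ is suitably ``spread'' across $W$ in a sense controlled by the intermediate constant $\varepsilon''$, and then show via a sequential/Hall-type argument on the auxiliary 4-uniform hypergraph of candidate replacements that the at most $10\varepsilon n$ required repairs can be executed in a vertex-disjoint manner avoiding $W'$. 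The key quantitative point, to be verified by careful bookkeeping, is that $\varepsilon' n^2$ comfortably dominates the cumulative losses incurred from the at most $10\varepsilon n$ forbidden $W'$-vertices together with the $O(\varepsilon n)$ vertices used by prior replacements, which is what the hierarchy $\varepsilon \ll \varepsilon'' \ll \varepsilon'$ is designed to guarantee.
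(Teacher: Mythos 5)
Your construction of $R$, $W$, and the labelling via the greedy promotion procedure is exactly the paper's algorithm, and your derivation of property~(1) from the termination condition is correct. However, the argument for property~(2) has a genuine gap, and you even flag it yourself. The problem is that the reservoir of candidate $Y$-copies at $v_i$ you count lives inside the \emph{final} $W$, which contains $V(Y_j)\setminus\{v_j\}$ for every $j\in[t_1]$; a replacement copy of $Y$ at $v_i$ can thus collide with intact tiles $Y_j$, and nothing in the degree condition $\deg(v_i,W)\ge\varepsilon'n^2$ prevents this. Your proposed fix — tightening the promotion criterion to demand a ``spread'' reservoir — would undermine property~(1): if fewer tiles are promoted, the algorithm can terminate with some $Y\in\cY$ and $v\in V(Y)$ still satisfying $\deg(v,W)\ge\varepsilon'n^2$, and the dichotomy you need (``promoted'' vs.\ ``low degree'') is destroyed.

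The paper's actual resolution is qualitatively different and is the piece you are missing. The point is not to \emph{avoid} damaging earlier tiles but to \emph{recursively repair} them, while controlling the recursion depth via an index gap. Concretely, the degree guarantee for $v_i$ is against the prefix $W_{i-1}$ (since that is when $v_i$ was promoted), and one deliberately draws the replacement $Y$-copy at $v_i$ only from $W_{i-\varepsilon'' n}$ — losing at most $3\varepsilon''n\,|W|$ pairs, which is negligible compared to $\varepsilon'n^2$. Consequently any intact tile $Y_k$ that the replacement damages satisfies $k\le i-\varepsilon''n$. Encoding the repairs as a forest of directed trees on $R$ where every arc $\overrightarrow{v_iv_k}$ has $i-k\ge\varepsilon''n$ and each vertex has at most three children (one per vertex of the replacement $Y$ that lands in some $V(Y_k)\setminus\{v_k\}$) bounds the depth by $1/\varepsilon''$ and hence the total number of affected tiles by $10\varepsilon n\sum_{k\le 1/\varepsilon''}3^k$, which is still $O(\varepsilon n)$. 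Your ``careful bookkeeping'' observation, that $\varepsilon'n^2$ dominates the losses from $W'$ and prior replacements, is correct as far as it goes, but the recursion-with-bounded-depth mechanism is the idea that makes the bookkeeping close, and it is absent from your proposal.
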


The item (2) in Proposition~\ref{prop} works in the following way.
Roughly speaking, when our arguments forbid a set of vertex-disjoint structures on the union of $U$ and the $Y$-tiling, by (2), one can extend such arguments to the union of $W$ and the vertices of the $Y$-tiling (e.g., it is easy to derive that $W$ is $Y$-free).
%Together with other ingredients, this flexibility eventually allows us to bridge between the two ``far-away'' extremal examples and get a full solution to the $Y$-tiling problem.

% For each $ i\in[t_1]  $, we have $ v_i\in V(Y_{i})$ and $\deg(v_i,W_{i-1})\ge\varepsilon'n^{2}  $.
\begin{proof}
We prove the proposition by the following greedy algorithm.

\begin{procedure}[h]
	\caption{Construction of R()}
	\label{alg:main}
	\SetKwInOut{Input}{Data}
	\SetKwInOut{Output}{Output}
	% \SetKw{STOP}{STOP}
	\Input{an $n$-vertex $k$-graph $H$ with a  maximum $Y$-tiling $ \cT $ and the uncovered set $ U $.}
%	\Output{vertex sets $ R $ and $ W_i $} 
	Initialize $ W_0\leftarrow U $, $R\leftarrow\emptyset$, $ \cY\leftarrow\cT $ and $ i\leftarrow0 $.

	\While{there exists a vertex $ v $ in  a copy of $ Y \in\mathcal{Y}  $  such that $ \deg(v,W_{i})\ge\varepsilon'n^{2}  $  }
	{ Label this vertex as $v_{i+1}$ and label this copy as $Y_{i+1}$
	
	 $ W_{i+1}\leftarrow W_{i}\cup(V(Y_{i+1})\setminus \{v_{i+1}\}) $
	
$ \mathcal{Y}\leftarrow\mathcal{Y}\setminus \{Y_{i+1}\} $
	
	$ R\leftarrow R\cup \{v_{i+1}\} $
	 
	 $i \gets i + 1$}

\textbf{else} Stop.

\Output{vertex sets $ R $, $\mathcal Y$ and $ W_i $.} 
		
\end{procedure} 
From the algorithm, we suppose $ R=\{v_1,v_2,\dots, v_{t_{1}}\} $, where $ t_1+t_2=|\cT| $. Then $ \cT\setminus\mathcal{Y}=\{Y_{1},Y_{2},\dots,Y_{t_1}\} $. 
%Note that $ \mathcal{Y}=\emptyset $ or  $ \deg(v,W_{i})<\varepsilon'n^{2}  $ for all $ v\in  V(Y)$ with  $ Y\in\mathcal{Y} $.
Label the copies of $ Y $ in $ \mathcal{Y} $ such that $ \mathcal{Y}=\{Y_{t_1+1},Y_{t_1+2},\dots,Y_{t_1+t_2}\} $.
Let $W :=W_{t_1}=U\cup (\bigcup_{i\in[t_1]}(V(Y_i)\setminus \{v_i\}))$.
Then for each $ i\in\{t_1+1,t_1+2,\dots,t_1+t_2\}  $, $ v\in V(Y_{i})$, we have $\deg(v,W)<\varepsilon'n^{2}  $. 
Note that $ \deg(v_i, W_{i-1})\ge\varepsilon'n^{2} $ for each $ v_i\in R  $.
So \[\label{1.1}\deg(v_j, W_{j-1})-\left(3\varepsilon'' n+3\times10\varepsilon n\sum_{k=1}^{1/\varepsilon''}3^{k}\right)|W|\ge\varepsilon'n^{2}-\varepsilon'n^{2}/2=\varepsilon'n^{2}/2, \tag{4.1}\] 
where the second term is the number of all  possible vertices we want to avoid during the following procedure.
Next we will use this degree condition to prove (2).

Fix  $ W'\subseteq W $ with $ |W'|\le 10\eps n $. 
Let $ W'\setminus U:=\{u_1,u_2,\dots, u_{q}\}  $, where $ q\le 10\varepsilon n $.
Note that $ W'\setminus U\subseteq  \bigcup_{i\in[t_1]}(V(Y_i)\setminus \{v_i\}$). For each $ v_i, i\in[t_1] $,  we will construct a copy of $ Y $ in $ H[R\cup(W\setminus W')] $ containing $ v_i$, such that all of them  together form a $ Y $-tiling of size $ t_1 $.
Without loss of generality, suppose $ u_i\in V(Y_{j_{i}}) $ for some $ j_{i}\in [t_1] $ and  $ j_1<j_2<\dots<j_q  $.
We will construct $ q $ vertex-disjoint directed rooted trees $ T_1,T_2, \dots ,T_q $ on $ R $ with roots $ v_{j_{1}},v_{j_{2}},\dots, v_{j_{q}} $ respectively, where  $ v_{j_{i}}\in V(Y_{j_{i}}) $ and  $ v_{j_{i}}\in R $, such that 
\begin{enumerate}
	\item if $  \overrightarrow{v_iv_j}\in E(T_{i'}) $ for some $ i'\in [q] $, then $ i-j\ge \eps'' n $,
	\item each vertex in the trees has at most three children.
\end{enumerate}
Hence the depth (the length of the longest path starting from the root in the tree)  of each tree is no more than $ \frac{t_1}{\varepsilon''n}\le1/\varepsilon''  $. 
Therefore the union of these trees contains at most $ 10\varepsilon n\sum_{k=0}^{1/\varepsilon''}3^{k} $ vertices.

We greedily construct the trees one by one as follows.
For $ i\le 0 $, let $ W_i:=U $.
We start with the empty tree and add the root vertex to it.
Throughout the process, as long as we add a new vertex $ v_i\in R $ to the tree (as a leaf), we choose a copy of $ Y $ denoted by $ Y(v_i) $ consisting of $ v_i $ and three other vertices from $ W_{i-\varepsilon'' n}$, vertex-disjoint from all existing vertices of the trees and the copies of $ Y $ associated with the tree vertices, which is possible by \eqref{1.1}. 
Specifically, suppose that $ S $ consists of all existing vertices of the trees.
Let $ W'_{i-\varepsilon'' n}:= W_{i-\varepsilon'' n}\setminus \bigcup_{v_{s}\in S }(V(Y_{s})\cup V(Y(v_{s}))) $.
We will choose $ Y(v_i) $ containing $ v_i $ in $ \{v_i\}\cup W'_{i-\varepsilon'' n} $.
Since $ |W_{i-1}|-|W_{i-\varepsilon'' n}|\le 3\varepsilon'' n $ and $ |\bigcup_{v_{s}\in S }(V(Y_{s})\cup V(Y(v_{s})))|\le 10\varepsilon n\sum_{k=1}^{1/\varepsilon''}3^{k}$, by \eqref{1.1}, we have $ \deg(v_j,  W'_{i-\varepsilon'' n})\ge\varepsilon'n^{2}/2>n/2 $, guaranteeing the existence of the desired $ Y(v_i) $.
Suppose $ V(Y(v_i)) = \{v_i, b_1, b_2, b_3\} $. 
If there is $ k\in [i-\eps'' n] $ such that $ b_j\in V(Y_k) $, then we add $ v_k $ as a child of $ v_i $. 
Thus, each $ v_i $ has at most three children in the tree.
The process terminates because the tree has depth at most $ 1/\eps'' $ and every vertex has at most three children. 
Other trees are constructed similarly, while being vertex-disjoint from all existing trees and the copies of $ Y $ associated with the tree vertices.

Let  $ \mathcal{Y}_{2}:=\{Y_{j}: j\in [t_1],  v_j\notin V(T_i)\ \text{for any} \ i\in[q]  \} $. 
By the construction of the trees, we obtain a desired copy of $ Y $ for each vertex of the trees such that all of them are pairwise disjoint.
Taking the union of $ \mathcal{Y}_{2} $ and all these newly built copies of $ Y $ associated with the tree vertices $\left\{Y(v_j): v_j\in \bigcup_{i\in[q]} V(T_i) \right\} $, we obtain a desired $ Y $-tiling of size~$ t_1 $.
\end{proof}
\begin{figure}[h]
	\begin{tikzpicture}
		[inner sep=2pt,
		vertex/.style={circle, draw=black!50, fill=black!50},
		%   rect/.style={rectangle, inner sep=7,minimum size=0.8},
		]
		\draw[rounded corners] (-2.5,1.4) rectangle (1.4,0.6);
		\draw[rounded corners] (-6,0.4) rectangle (1.4,-1.8);

		\draw (2.8,0) ellipse  (0.45 and 1.6);
		\draw (4.1,0) ellipse  (0.45 and 1.6);
		\draw (6.8,0) ellipse  (0.45 and 1.6);
		
		\node at (-1.1,1) [vertex,color=red] {};
		\node at (0.7,1) [vertex,color=red] {};
		\node at (-1.9,1) [vertex,color=red] {};

	   	\node at (0.7,0) [vertex,color=black] {};
		\node at (0.7,-0.65) [vertex,color=black] {};
		\node at (0.7,-1.3) [vertex,color=black] {};
		\node at (-1.1,0) [vertex,color=black] {};
		\node at (-1.1,-0.65) [vertex,color=black] {};
		\node at (-1.1,-1.3) [vertex,color=black] {};
		\node at (-1.9,0) [vertex,color=black] {};
		\node at (-1.9,-0.65) [vertex,color=black] {};
		\node at (-1.9,-1.3) [vertex,color=black] {};
		\node at (2.8,1) [vertex,color=black] {};
		\node at (2.8,0.3) [vertex,color=black] {};
	
		\node at (2.8,-0.4) [vertex,color=black] {};
		\node at (2.8,-1.1) [vertex,color=black] {};
		\node at (4.1,1) [vertex,color=black] {};
		\node at (4.1,0.3) [vertex,color=black] {};
		\node at (4.1,-0.4) [vertex,color=black] {};
		\node at (4.1,-1.1) [vertex,color=black] {};
		\node at (6.8,1) [vertex,color=black] {};
		\node at (6.8,0.3) [vertex,color=black] {};
		\node at (6.8,-0.4) [vertex,color=black] {};
		\node at (6.8,-1.1) [vertex,color=black] {};
		
		\draw (-2.5,0.4) -- (-2.5,-1.8);
	\node at (-0.6,1.5) [label=above:$R$] {};
       \node at (-2.5,-2.5) [label=above:$W$] {};
       \node at (5,-2.5) [label=above:$\cY$] {};
	\node at (-4,-1) [label=above:$U$] {};
	   \node at (-0.2,-1) [label=above:$\dots$] {};
		\node at (-0.2,0.8) [label=above:$\dots$] {};
		\node at (5.5,-0) [label=above:$\dots$] {};
		\node at (2.9,1.6) [label=above:$Y_{t_1+1}$] {};
		\node at (4.2,1.6) [label=above:$Y_{t_1+2}$] {};
		\node at (6.9,1.6) [label=above:$Y_{t_1+t_2}$] {};
		
	\end{tikzpicture}
	\caption{The output of Proposition~\ref{prop}.}
	\label{figure3}
\end{figure}
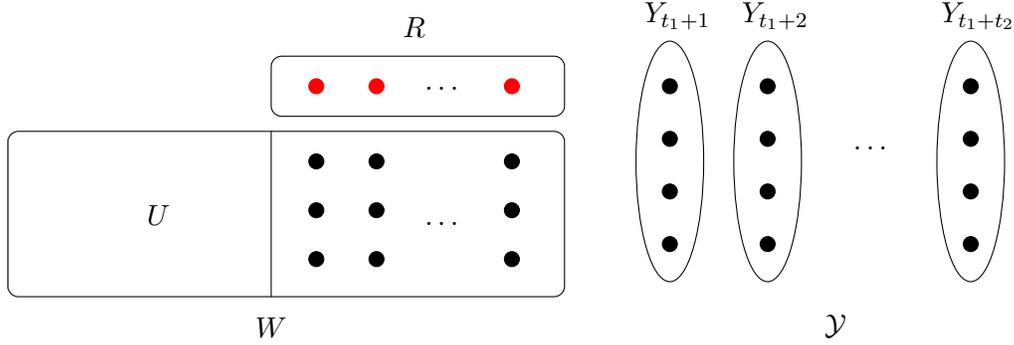
Fix the $ \cT,R,W$ returned by Proposition~\ref{prop}.
So $ t_1+t_2=|\cT|$, $ \cT=\{Y_{1},Y_{2},\dots,Y_{t_1+t_2}\}$ and $W :=U\cup \left(\bigcup_{i\in[t_1]}V(Y_i)\setminus R\right)$.
Suppose $ R=\{v_1,v_2,\dots, v_{t_{1}}\} $ with $ v_i \in V(Y_{i})$ for each $  i\in[t_1] $. 
Let $ \mathcal{Y}:=\{Y_{t_1+1},Y_{t_1+2},$ $\dots,Y_{t_1+t_2}\} $ (see Figure~\ref{figure3}).
We show the simple fact that $ W $ has no copy of $ Y $. 
\begin{claim} \label{c1}
$ W $ is $ Y $-free. 	
\end{claim}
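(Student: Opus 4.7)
The argument is a short contradiction that essentially just unpacks property~(2) of Proposition~\ref{prop}. Suppose for contradiction that $W$ contains a copy $Y^{\ast}$ of $Y$, and put $W' := V(Y^{\ast}) \subseteq W$, so that $|W'| = 4 \le 10\varepsilon n$ for $n$ sufficiently large. Property~(2) of Proposition~\ref{prop} then yields a $Y$-tiling $\cT'$ of size $t_1$ inside $H[R \cup (W \setminus W')]$.

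The key observation is that the three families $\cT'$, $\{Y^{\ast}\}$, and $\cY = \{Y_{t_1+1}, \dots, Y_{t_1+t_2}\}$ occupy pairwise disjoint vertex sets: $\cT'$ lives inside $R \cup (W \setminus W')$, the copy $Y^{\ast}$ lives inside $W'$, and $\cY$ lives inside $\bigcup_{i > t_1} V(Y_i)$, which is disjoint from $R \cup W$ by the construction in Proposition~\ref{prop}. Taking the union of these three families therefore gives a $Y$-tiling in $H$ of size $t_1 + 1 + t_2 = |\cT| + 1$, contradicting the choice of $\cT$ as a maximum $Y$-tiling.

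There is no real obstacle here: the entire structural work has already been absorbed into Proposition~\ref{prop}, whose property~(2) was precisely designed so that a small obstruction of size up to $10\varepsilon n$ inside $W$ can be avoided while still completing a $Y$-tiling on $R \cup W$ of the same size $t_1$. Claim~\ref{c1} is essentially the simplest possible instance of this mechanism, with obstruction size exactly $4$; the same trick will presumably be reused with larger obstructions in the sequel when ruling out the more elaborate forbidden configurations on $U$ and a few copies of $Y$ from $\cT$.
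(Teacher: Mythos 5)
Your proof is correct and follows the paper's argument exactly: take $W' = V(Y^\ast)$, invoke Proposition~\ref{prop}(2) to get a $Y$-tiling of size $t_1$ on $R \cup (W \setminus W')$, and combine it with $Y^\ast$ and $\cY$ to beat the maximum $Y$-tiling $\cT$. The explicit disjointness check you include is implicit in the paper but is a harmless (and clarifying) addition.
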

\begin{proof}
Suppose $  W $ contains a copy of $ Y $ denoted by $ Y_0 $.
Applying Proposition~\ref{prop} (2) with $ W'=V(Y_0) $, we get that there exists a $Y$-tiling $\mathcal{Y}'  $ in $ H[R\cup(W\setminus V(Y_0))] $ of size $t_1$. 
Recall that $ \mathcal{Y}=\{Y_{t_1+1},Y_{t_1+2},\dots,Y_{t_1+t_2}\} $. 
Thus $\mathcal{Y}'\cup \mathcal{Y} \cup \{Y_0\}  $ is a $Y$-tiling of size $ t_1+t_2+1 $, contradicting the maximality of $ \cT $.
\end{proof}

Let $ y_1 $ be the number of edges that have nonempty intersections with $R$. 
Note that
\[ y_1\le \binom{n}{3}-\binom{n-t_1}{3}. \tag{4.2}\]  	
Let $ y_2:=e(H-R)=e(H)-y_1 $.
By Claim~\ref{c1} and Theorem~\ref{FrFu}, we have $e(H[W])\le n^2$. 
If $ t_2\le2 $, then the number of edges in $ H-R $ not contained in $ H[W] $ is at most $ 8n^{2} $.
So we have $ y_2\le9n^2 $ and	\[ e(H)= y_1+y_{2}\le\binom{n}{3}-\binom{n-t_1}{3}+9n^2\le\binom{n}{3}-\binom{n-|\cT|}{3}+\gamma n^3. \]
%For $ t_2\le2 $, note that $ |YYY|=0 $ and $ |YYW|\le 16n $. Then by \eqref{1.6}, we get  $y_{2}\le 16n+3\varepsilon'n^{3} $.
For the case $ t_2\ge 3 $, we shall prove the following in the next subsection:
\[\label{1.4} y_2\le\begin{cases}
	64\binom{t_2}{3}+\gamma n^{3} & \text{if $\frac{|W|}{t_2-2}\le\frac{9}{8}$,} \\
	37\binom{t_2}{3}+8\binom{t_2}{2}|W|+\gamma n^{3} & \text{if $\frac{|W|}{t_2-2}>\frac{9}{8}$.}
\end{cases} \tag{4.3}\]
Suppose \eqref{1.4} holds. 
If 	$\frac{|W|}{t_2-2}\le\frac{9}{8}$, then by $ t_1+4t_2+|W|=n $, we have  $8t_2\ge n-t_1$. Thus we have 
\begin{align*}  \label{e1}
	e(H)&= y_1+y_{2}\\
	&\le\binom{n}{3}-\binom{n-t_1}{3}+	64\binom{t_2}{3}+\gamma n^{3} \\&=\binom{t_1}{3}+\binom{t_1}{2}(n-t_1)+t_1\binom{n-t_1}{2}+64\binom{t_2}{3}+\gamma n^{3}\\&\le64\binom{t_1}{3}+8t_2\binom{t_1}{2}+t_1\binom{8t_2}{2}+64\binom{t_2}{3}+\gamma n^{3}\\&\le\binom{4t_1}{3}+4t_2\binom{4t_1}{2}+4t_1\binom{4t_2}{2}+\binom{4t_2}{3}+\gamma n^{3}\\&=\binom{4t_1+4t_2}{3}+\gamma n^{3}=\binom{ 4|\cT|}{3}+\gamma n^3, \tag{4.4}
\end{align*}
where we used  $4t_2\binom{4t_1}{2}+4t_1\binom{4t_2}{2}-8t_2\binom{t_1}{2}-t_1\binom{8t_2}{2}=4t_1t_2(7t_1-2)\ge0$ in the last inequality.
On the other hand, if $\frac{|W|}{t_2-2}>\frac{9}{8}$, then we first claim that $ 37\binom{t_2}{3}+8\binom{t_2}{2}|W|\le\binom{n-t_1}{3}-\binom{n-t_1-t_2}{3}  $. Indeed,
\[
\begin{aligned} 
	\binom{n-t_1}{3}-\binom{n-t_1-t_2}{3}&=\binom{t_2}{3}+\binom{t_2}{2}(n-t_1-t_2)+t_2\binom{n-t_1-t_2}{2}\\&=\binom{t_2}{3}+\binom{t_2}{2}(|W|+3t_2)+t_2\binom{|W|+3t_2}{2}
	\\&=\binom{t_2}{3}+\binom{t_2}{2}|W|+3\binom{t_2}{2}t_2+t_2\binom{|W|}{2}+t_2\binom{3t_2}{2}+3t_2^2|W|\\&\ge37\binom{t_2}{3}+8\binom{t_2}{2}|W|, %\tag{1.5}
\end{aligned}
\]
where we used  $t_2\le |W|$ in the last inequality. Thus

\begin{align*} \label{e2}
	e(H)= y_1+y_{2}&\le\binom{n}{3}-\binom{n-t_1}{3}+37\binom{t_2}{3}+8\binom{t_2}{2}|W|+\gamma n^{3} \\&\le\binom{n}{3}-\binom{n-t_1}{3}+\binom{n-t_1}{3}-\binom{n-t_1-t_2}{3}+\gamma n^{3}
	%	\\&=\binom{n}{3}-\binom{n-t_1-t_2}{3}+\gamma n^3
	\\&=\binom{n}{3}-\binom{n-|\cT|}{3}+\gamma n^3. \tag{4.5}
\end{align*}
Combining  \eqref{e1} and \eqref{e2}, we get Lemma~\ref{lem1}.
\end{proof} 
Now it remains to prove \eqref{1.4} for $ t_2\ge3 $.

\subsection{More notation and tools}
Recall that $ \cT=\{Y_{1},Y_{2},\dots,Y_{t_1+t_2}\} $ is  a maximum $Y$-tiling in $ H $ and $\mathcal{Y}=\{Y_{t_1+1},\dots,Y_{t_1+t_2}\}$.
We need some more notation. %Denote the triples of members of $ \cY $ by $ T_{1},\dots,T_{\binom{t_2}{3}} $.
Fix a pair $P=\{ Y_{p},Y_{q}\}\in \binom{ \cY }{2}$. 
Let $G'_{P}$ be the bipartite graph on $\left(V(Y_{p}),V(Y_{q})\right) $ with $\{u, v\}\in E(G'_P)$ if and only if there are at least $ 16\varepsilon n $ vertices $w$ in $W$, such that $ \{u,v,w\}\in E(H) $, where $u\in V(Y_{p})$, $v\in V(Y_{q})$.
Let $ \mathcal{D} $ be the collection of $ P\in \binom{\cY}{2} $ such that $ e(G'_P)\ge6 $  and $ G'_P $ has a vertex cover of two vertices which are in the same part.
Fix a triple $T=\{ Y_{i},Y_{j},Y_{k}\}\in \binom{ \cY }{3}$.
Let $ P_1=\{ Y_{i},Y_{j}\} $, $ P_2=\{ Y_{j},Y_{k}\} $ and $ P_3=\{ Y_{k},Y_{i}\} $.
%Let $V(Y_{j_{1}})=V_{1}$, $ V(Y_{j_{2}})=V_{2} $ and $ V(Y_{j_{3}})=V_{3} $. 
Let $G'_{T}:=\bigcup_{i\in[3] }G'_{P_i}$ be the tripartite graph on $\left(V(Y_{i}),V(Y_{j}),V(Y_{k})\right) $. 
Let $ V(T):=V(Y_{i})\cup V(Y_{j}) \cup V(Y_{k}) $.
We have the following claim on the structure of $G'_{P}$. 
\begin{claim} \label{c3}
Suppose $ P\in\binom{\cY}{2} $, then $G'_{P}$ has no matching of size three and $e(G'_{P})\le8$.
	%then  $G'_{P} $ has a vertex cover of size at most two and $e(G'_{P})\le8$. 
\end{claim}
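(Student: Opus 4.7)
The plan is to establish both assertions together: first I will rule out a matching of size three in $G'_P$ by using such a matching to build a $Y$-tiling of $H$ strictly larger than $\cT$; then the edge bound $e(G'_P)\le 8$ will follow immediately from K\"onig's theorem applied to the bipartite graph $G'_P$.

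For the matching statement, suppose $\{u_iv_i\}_{i=1}^{3}$ is a matching of size three in $G'_P$ with $u_i\in V(Y_p)$ and $v_i\in V(Y_q)$. By the definition of $G'_P$, for each $i\in[3]$ there are at least $16\varepsilon n$ vertices $w\in W$ with $\{u_i,v_i,w\}\in E(H)$. Since $16\varepsilon n\gg 6$ for large $n$, I can greedily pick six pairwise distinct vertices $w_1,w_1',w_2,w_2',w_3,w_3'\in W$ so that both $\{u_i,v_i,w_i\}$ and $\{u_i,v_i,w_i'\}$ lie in $E(H)$ for each $i$. These two edges share exactly the two vertices $u_i,v_i$, so they form a copy $Y_i'$ of $Y$; the three copies $Y_1',Y_2',Y_3'$ are pairwise vertex-disjoint and contained in $V(Y_p)\cup V(Y_q)\cup W'$, where $W':=\{w_j,w_j':j\in[3]\}$ has size $6\le 10\varepsilon n$.

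The main obstacle is that the chosen $w_j,w_j'$ may themselves belong to $V(Y_k)$ for some $k\in[t_1]$, so one cannot simply delete $Y_p,Y_q$ from $\cT$ and add the three new copies. This is precisely the situation Proposition~\ref{prop}(2) was designed for: applied with this $W'$, it yields a $Y$-tiling $\mathcal{Z}$ of $H[R\cup(W\setminus W')]$ of size $t_1$. I then claim
\[
\mathcal{Z}\cup(\mathcal{Y}\setminus\{Y_p,Y_q\})\cup\{Y_1',Y_2',Y_3'\}
\]
is a $Y$-tiling of $H$ of size $t_1+(t_2-2)+3=|\cT|+1$, contradicting the maximality of $\cT$. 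Pairwise disjointness rests on three observations: $R\cup W$ is disjoint from $V(\mathcal{Y})=V(Y_{t_1+1})\cup\cdots\cup V(Y_{t_1+t_2})$ by the definitions of $R$ and $W$; the $Y_i'$'s lie in $V(Y_p)\cup V(Y_q)\cup W'$ with $V(Y_p),V(Y_q)\subseteq V(\mathcal{Y})$; and $\mathcal{Z}\cap W'=\emptyset$ by construction.

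Given this, K\"onig's theorem provides a vertex cover of $G'_P$ of size at most two. Since each side of the bipartition has exactly four vertices, every vertex of $G'_P$ has degree at most four, so the two cover vertices meet at most $2\cdot 4=8$ edges, yielding $e(G'_P)\le 8$.
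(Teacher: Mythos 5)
Your proposal is correct and follows essentially the same route as the paper: build three disjoint copies of $Y$ from the matching in $G'_P$ using $16\varepsilon n \gg 6$ choices in $W$, invoke Proposition~\ref{prop}(2) with $W'$ to repair the $Y_1,\dots,Y_{t_1}$ portion of the tiling, and conclude by maximality of $\cT$; the edge bound then follows from K\"onig's theorem exactly as in the paper. Your extra remarks on pairwise disjointness and the $2\cdot 4=8$ count just make explicit what the paper leaves implicit.
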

\begin{proof}
We claim that $G'_{P}$ has no matching of size three.
Indeed, suppose  $G'_{P} $ has a matching $\{e_1, e_2, e_3\}$.
By the definition of $G'_{P}$, we can find vertices $u_1,u_2,\dots, u_6\in W$ such that $e_1\cup \{u_1\}, e_1\cup \{u_2\}$, $e_2\cup \{u_3\}, e_2\cup \{u_4\}, e_3\cup \{u_5\}, e_3\cup \{u_6\}\in E(H)$, which gives a $Y$-tiling of size three denoted by $\mathcal{Y}'$.
Let $ W':=\{u_1,\dots, u_6\} $.
Applying Proposition~\ref{prop}, we get that there exists a $Y$-tiling $\mathcal{Y}''$ in $ H[R\cup(W\setminus W')] $ of size $t_1$. 
Thus $(\mathcal{Y}\setminus P)\cup\mathcal{Y}'\cup \mathcal{Y}''$ is a $Y$-tiling of size $ t_1+t_2+1 $, contradicting the maximality of $ \cT $.
So $G'_{P}$ has no matching of size three. 
By K\" onig's theorem \cite{2000Graph}, $G'_{P}$ has a vertex cover of size at most two, implying that $ e(G'_{P})\le 8 $.
\end{proof}

In order to obtain an  upper bound on the size of $ \mathcal{D} $. We need the following fact.
Let $ K_{2,3}^{+} $ be an oriented copy of (the complete bipartite graph) $ K_{2,3} $ with all six edges oriented from the part of size two to the part of size three (see Figure~\ref{figure1}). %be an orientation of $ K_{2,3} $ with all six edges from the vertices in the part containing two vertices to the vertices in the part containing three vertices
\begin{fact}\label{K}
Let $ 0<1/n\ll\varepsilon $.
Suppose $ D $ is a digraph on $ n $ vertices with $e(D)\ge \varepsilon n^{2} $.
Then $ D $ contains a copy of $ K_{2,3}^{+} $.
\end{fact}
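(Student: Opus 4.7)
The plan is to run a standard Kővári--Sós--Turán / double-counting argument on the digraph $D$. Specifically, I would look for two vertices $u_1,u_2$ that share at least three common out-neighbors; any three such common out-neighbors, together with $u_1$ and $u_2$, give the required copy of $K_{2,3}^+$ (note that a common out-neighbor of $u_1,u_2$ is automatically distinct from $u_1$ and $u_2$, so the five vertices are distinct).

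The key estimate is the count
\[
N:=\#\bigl\{(u_1,u_2,w)\in V(D)^3:u_1\ne u_2,\ u_1\to w,\ u_2\to w\bigr\}=\sum_{w\in V(D)}d^{-}(w)\bigl(d^{-}(w)-1\bigr).
\]
By the Cauchy--Schwarz inequality (or convexity of $x\mapsto x(x-1)$) together with $\sum_w d^{-}(w)=e(D)\ge \varepsilon n^2$, I would lower bound this sum by
\[
N\ge n\cdot\frac{e(D)}{n}\left(\frac{e(D)}{n}-1\right)\ge \frac{e(D)^2}{n}-e(D)\ge \varepsilon^2 n^3-\varepsilon n^2.
\]
On the other hand, if $D$ contains no $K_{2,3}^+$, then for every ordered pair $(u_1,u_2)$ with $u_1\ne u_2$ the set of common out-neighbors has size at most $2$, which gives the upper bound $N\le 2n(n-1)$.

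Comparing these two bounds yields $\varepsilon^2 n^3-\varepsilon n^2\le 2n(n-1)$, and using $1/n\ll\varepsilon$ this is a contradiction for $n$ sufficiently large, completing the proof. There is no real obstacle here: the only point to be slightly careful about is the lower bound on $\sum_w d^{-}(w)(d^{-}(w)-1)$ (handled by convexity) and the observation that common out-neighbors of two distinct vertices are automatically different from those two vertices, so the five chosen vertices are indeed distinct.
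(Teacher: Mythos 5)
Your proof is correct, and it is really the same Kővári--Sós--Turán counting argument that the paper uses, just implemented directly on the digraph instead of first passing to a bipartite sub-digraph. The paper takes a random bipartition $V=A\cup B$, keeps the (at least $\varepsilon n^2/4$) edges going from $A$ to $B$, and then cites the Kővári--Sós--Turán theorem as a black box on the resulting bipartite graph. You instead count ordered triples $(u_1,u_2,w)$ with $u_1\neq u_2$ and $u_1\to w$, $u_2\to w$, lower-bounding $\sum_w d^-(w)(d^-(w)-1)$ by convexity and upper-bounding it by $2n(n-1)$ if there is no $K_{2,3}^+$; this is exactly the standard proof of KST, run in place. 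Your observation that a common out-neighbor is automatically distinct from $u_1,u_2$ (since digraphs are loop-free) is the right point to flag, and it replaces what the bipartition step buys in the paper's version. The only step worth stating a little more carefully is the passage $\frac{e(D)^2}{n}-e(D)\ge \varepsilon^2n^3-\varepsilon n^2$: since $x\mapsto x^2/n-x$ is increasing only for $x\ge n/2$, you should note that $e(D)\ge\varepsilon n^2\ge n/2$ (which follows from $1/n\ll\varepsilon$) before substituting the lower bound for $e(D)$. With that said, both routes are sound; yours is marginally more self-contained since it avoids the probabilistic bipartition.
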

\begin{proof}
We shall find a bipartite directed subgraph of $ D $ with at least half of the edges.
Let $ A\subseteq V(D) $ be a random subset given by $ \Pr(x\in A)=1/2 $ for each  $ x\in V(D) $ with these choices being mutually independent.
Let $ B:=V-A $ and  $ X $ be the number of edges in the bipartite digraph $ D[A,B] $.
Then $ \mathbb{E}[X]=e(D)/2 $.
We fix a choice of $ A $ such that $e(D[A,B])\ge e(D)/2\ge \eps n^2/2$. 
Without loss of generality, we assume that there are at least $\eps n^2/4$ edges which are directed from $ A$ to $B $. 
Using the K\H ov\'ari--S\'os--Tur\'an theorem, we get that  any $  K_{2,3} $-free bipartite graph with $ n $ vertices in each part has $ O(n^{3/2}) $ edges.
So we can find a copy of $ K_{2,3}^{+} $ in this bipartite digraph, and thus in $ D $.
\end{proof}
%We use the above fact 
The following claim gives an upper bound on the size of $ \mathcal{D} $.
\begin{claim}\label{D}
$ |\mathcal{D}|\le \varepsilon'n^{2} $.
\end{claim}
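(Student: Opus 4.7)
The plan is to argue by contradiction. Suppose that $|\mathcal{D}| > \varepsilon' n^{2}$. For each pair $P = \{Y_p, Y_q\} \in \mathcal{D}$, fix one of the two parts (say $V(Y_p)$) that contains a 2-vertex cover of $G'_P$, and orient a directed edge $Y_p \to Y_q$. This produces a digraph $D$ on the vertex set $\cY$ with $e(D) \ge |\mathcal{D}| > \varepsilon' n^{2}$. Since $1/n \ll \varepsilon'$, Fact~\ref{K} (applied with $\varepsilon'$ in place of $\varepsilon$) yields a copy of $K_{2,3}^{+}$ in $D$ whose sources I denote $Y_a, Y_{a'}$ and whose sinks I denote $Y_{b_1}, Y_{b_2}, Y_{b_3}$.

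For each pair $(i,j) \in \{a, a'\} \times [3]$, the pair $\{Y_i, Y_{b_j}\} \in \mathcal{D}$ has a 2-vertex cover contained in $V(Y_i)$ with $e(G'_{\{Y_i, Y_{b_j}\}}) \ge 6$. By pigeonhole on the two cover vertices, at least one of them is incident to at least three edges in $G'_{\{Y_i, Y_{b_j}\}}$, i.e., has at least three dense-pair neighbors in $V(Y_{b_j})$; each such dense pair extends to at least $16\varepsilon n$ edges of $H$ using a vertex of $W$ as the third coordinate.

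Using these six dense structures, the next step is to exhibit a $Y$-tiling of size $6$ inside $H[V(Y_a) \cup V(Y_{a'}) \cup V(Y_{b_1}) \cup V(Y_{b_2}) \cup V(Y_{b_3}) \cup W']$ for some $W' \subseteq W$ with $|W'| \le 10\varepsilon n$. Once this is achieved, Proposition~\ref{prop}(2) applied to $W'$ produces a $Y$-tiling of size $t_1$ in $H[R \cup (W \setminus W')]$, which is vertex-disjoint from the five chosen copies and from the untouched $Y$-tiling $\cY \setminus \{Y_a, Y_{a'}, Y_{b_1}, Y_{b_2}, Y_{b_3}\}$ of size $t_2 - 5$. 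The union of these three pieces is a $Y$-tiling of $H$ of size $t_1 + (t_2 - 5) + 6 = |\cT| + 1$, contradicting the maximality of $\cT$.

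The main obstacle is the explicit construction of the six disjoint $Y$-copies on these $20 + 4 = 24$ vertices. Since each dense pair has at least $16 \varepsilon n$ extensions into $W$ while we only ever use at most $10\varepsilon n$ of them, the extension step never fails; the real combinatorial work is to choose the six dense pairs so that the resulting $Y$-copies partition $V(Y_a) \cup V(Y_{a'}) \cup V(Y_{b_1}) \cup V(Y_{b_2}) \cup V(Y_{b_3})$ together with four $W$-vertices. This requires a case analysis based on whether each heavy cover-vertex is a ``center'' (degree $2$ in its parent $Y$) or a ``non-center'' (degree $1$), so that $Y_i$ minus its heavy vertex retains an edge of $Y_i$ usable in the new tiling. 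In the worst configurations, one may strengthen Fact~\ref{K} to $K_{2,r}^{+}$ for a suitable fixed $r$, which remains valid by the K\H{o}v\'ari--S\'os--Tur\'an bound $O(n^{2-1/r}) \ll \varepsilon' n^{2}$ for $n$ large, and then pigeonhole on the $\binom{4}{2} = 6$ possible 2-subsets of $V(Y_a)$ and $V(Y_{a'})$ serving as covers to pin down a common heavy vertex across multiple sinks, reducing to the favorable case.
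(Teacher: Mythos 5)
Your high-level plan — build a digraph on $\cY$, extract $K_{2,3}^{+}$ via Fact~\ref{K}, then replace five copies of $Y$ in $\cY$ by six new ones built from dense pairs and $W$-vertices — matches the paper's strategy. But there are two genuine problems, and the second one you explicitly leave open.

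First, your orientation is reversed, and this matters. You orient $Y_p\to Y_q$ when the cover of $G'_{\{Y_p,Y_q\}}$ sits in $V(Y_p)$, so in the copy of $K_{2,3}^{+}$ all six covers land in the two \emph{sources} $V(Y_a)\cup V(Y_{a'})$. The paper instead orients arcs toward the cover, so the three covers sit in the three \emph{sinks}, one 2-set per $V(Y_{b_j})$. That difference is decisive: with your orientation it can happen (e.g.\ if the three covers in $V(Y_a)$ coincide, and likewise in $V(Y_{a'})$) that a 4-vertex set covers all of $G=\bigcup_{i,j}G'_{\{Y_i,Y_{b_j}\}}$, so $G$ has no matching of size $5$, and you cannot produce six disjoint dense pairs. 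Worse, adding the averaging refinement (fix the ``location'' of the cover, which the paper uses and you omit) would \emph{force} the three covers in $V(Y_a)$ to be the same 2-set in your setup, guaranteeing this bad case. The paper's orientation avoids this: the six cover vertices are distributed two per sink, are a vertex cover of $G$, each has $G$-degree at least $4$ (and the two largest have degree at least $6$), and a greedy argument then yields a matching of size $6$ with the other endpoints in the 8-vertex source side.

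Second, you acknowledge the decisive step — producing the six disjoint $Y$-copies — is not carried out, and the remedies you sketch are off-target. The ``center vs.\ non-center'' case analysis is unnecessary: one does not need the new $Y$-copies to cover $V(Y_{i_j})$, only to be pairwise disjoint; the count $t_1+(t_2-5)+6=|\cT|+1$ works regardless of which old vertices are reused. Strengthening to $K_{2,r}^{+}$ and pigeonholing on $\binom{4}{2}$ cover locations is close in spirit to the paper's averaging step, but you apply it to the wrong side (the sources) and do not derive a matching of size six from it. As it stands the argument has a real gap exactly where the paper does the work.
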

\begin{proof}
Towards a contradiction, we suppose $ |\mathcal{D}|>\varepsilon'n^{2} $.
Let $ D $ be a digraph on $ \binom{\cY}{2} $ such that the ordered pair $ (Y_{i},Y_{j}) $ is an arc of  $ D $ if and only if the pair $ \{Y_{i},Y_{j}\}\in\mathcal{D} $ and $ G'_{\{Y_{i},Y_{j}\}} $ contains a vertex cover of size two  in $ V(Y_{j}) $.
So $ e(D)\ge \varepsilon'n^{2} $.
For each member of $ \cY $, we label the four vertices arbitrarily.
%We consider each member of  $ \binom{\cY}{2} $ with a labeling of the vertices.
Fix an edge $ (Y_1,Y_2) $ of $ D $, and note that there are six possibilities for the location of the vertex cover of size two of $ G'_{\{Y_1,Y_2\}} $.
Thus, by averaging, there exists a subgraph $ D' $ of $ D $ such that $ e(D')\ge \eps' n^2/6 $ so that for all edges $ (Y',Y'') $  in $ D' $, the two vertices in the vertex cover of $ G'_{\{Y',Y''\}} $ are in the same location.
% fix $ Y'\in \cY $, the vertex cover of size two $ G'_{\{Y'',Y'\}} $ in $ V(Y') $ is the same for all arcs $ (Y'',Y') \in E(D')$, $ Y''\in \cY $.
Using Fact~\ref{K} with $ \varepsilon'/6 $ in place of $ \varepsilon $, we get that $ D' $ contains a copy of $ K_{2,3}^{+} $ denoted by $ K $.

Suppose $ K $ has two parts  $ \{Y_{i_1},Y_{i_2}\} $ and $\{Y_{i_3},Y_{i_4},Y_{i_5}\} $ and contains all arcs $ (Y_{i_p},Y_{i_q}) $ for $ p\in[2], q\in\{3,4,5\} $.
Let $ I_{q} $, $ q\in\{3,4,5\} $ be the vertex cover of $ G'_{\{Y_{i_p},Y_{i_q}\}}  $ with $ |I_{q}|=2 $ and let $ G:=\bigcup_{ p\in[2], q\in\{3,4,5\}}G'_{\{Y_{i_p},Y_{i_q}\}} $.
Suppose $I:= \bigcup_{q\in\{3,4,5\}}I_{q}=\{v_1,v_2,v_3,v_4,v_5,v_6\} $ with $ d_{G}(v_6)\ge d_{G}(v_5)\ge d_{G}(v_4)\ge d_{G}(v_3)\ge d_{G}(v_2)\ge d_{G}(v_1) $.
Since $ e(G'_{\{Y_{i_p},Y_{i_q}\}})\ge6 $ for $p\in[2]$, $q\in\{3,4,5\} $, we have $ d_{G'_{\{Y_{i_p},Y_{i_q}\}}}(v_i)\ge6-4=2 $, where $ i\in[6] $.
So  $ d_{G}(v_i)\ge2+2=4 $ for each $ i\in[6] $.
Note that  $ d_{G}(v_6)\le 8 $, $ e(G)\ge6\times6=36 $ and $ I $ is a vertex cover of $ G $.
Then we have $ d_{G}(v_5)\ge 28/5 $, which implies that $ d_G(v_6)\ge d_{G}(v_5)\ge 6 $. Therefore, we can greedily find a matching $ \{e_1,e_2,\dots, e_6\} $ which consists of $ v_1,v_2,\dots, v_6 $ and six vertices in $ V(Y_{i_{1}}\cup Y_{i_{2}}) $, see Figure~\ref{figure1}.
Since each edge of $ \{e_i: i\in [6]\} $ can be extended to a copy of $Y$ by using two vertices in $W$, we may get six vertex-disjoint copies of $Y$, denoted by $ \mathcal{Y}' $.
Let $ W'\subseteq W $ be the set of 12 vertices used by the members of  $ \mathcal{Y}' $.
Applying Proposition~\ref{prop}, we obtain a $Y$-tiling $\mathcal{Y}''  $ in $ H[R\cup(W\setminus W')] $ of size $t_1$. 
Thus $\mathcal{Y}\setminus (\bigcup_{j\in[5]}Y_{i_j})\cup\mathcal{Y}'\cup \mathcal{Y}''  $ is a $Y$-tiling of size $ t_1+t_2+1 $, contradicting the maximality of $ \cT $.
\end{proof}
\begin{figure}
	\begin{tikzpicture}
		[inner sep=2pt,
		vertex/.style={circle, draw=black!50, fill=black!50},
		%   rect/.style={rectangle, inner sep=7,minimum size=0.8},
		]
		%\draw[line width=10pt, color=red!20] (-1.2,1)--(2.6,1);
		\tikzstyle{square}=[rectangle, minimum size=2.3mm, fill=black, draw]
		\node at (-8,0) [vertex,color=black] {};		
		\node at (-4.5,-1) [vertex,color=black] {};		
		\node at (-8,2) [vertex,color=black] {};		
		\node at (-4.5,1) [vertex,color=black] {};		
		\node at (-4.5,3) [vertex,color=black] {};		
		\node at (-8,-0.8) [label=above:$Y_{i_{2}}$] {};
		\node at (-4.5,-1.8) [label=above:$Y_{i_{5}}$] {};
		\node at (-8,1.2) [label=above:$Y_{i_{1}}$] {};
		\node at (-4.5,0.2) [label=above:$Y_{i_{4}}$] {};
		\node at (-4.5,2.2) [label=above:$Y_{i_{3}}$] {};
		\draw[->,line width=1.1pt] (-8,-0) -- (-4.6,0.95);
		\draw[->,line width=1.1pt] (-8,-0) -- (-4.6,-1.05);
		\draw[->,line width=1.1pt] (-8,-0) -- (-4.6,2.95);
		\draw[->,line width=1.1pt] (-8,2) -- (-4.6,3.05);
		\draw[->,line width=1.1pt] (-8,2) -- (-4.6,1.05);
		\draw[->,line width=1.1pt] (-8,2) -- (-4.6,-0.95);
		
		\draw[rounded corners] (1.8,4.65) rectangle (3,2.3);
		\draw[rounded corners] (1.8,2) rectangle (3,-0.5);
		\draw[rounded corners] (1.8,-0.8) rectangle (3,-3.1);
		\draw[rounded corners] (-1.15,2) rectangle (0.05,-0.5);
		\draw[rounded corners] (4.75,2) rectangle (5.95,-0.5);
		
		\node at (-0.5,1.7) [vertex,color=black] {};
		\node at (-0.5,1.05) [vertex,color=black] {};
		\node at (-0.5,0.45) [vertex,color=black] {};
		\node at (-0.5,-0.2) [vertex,color=black] {};
		
		\node at (5.3,1.7) [vertex,color=black] {};
		\node at (5.3,1.05) [vertex,color=black] {};
		\node at (5.3,0.45) [vertex,color=black] {};
		\node at (5.3,-0.2) [vertex,color=black] {};
		\node at (2.4,1.7) [vertex,color=black] {};
		\node at (2.4,1.05) [square,color=blue] {};
		\node at (2.4,0.45) [square,color=blue] {};
		\node at (2.4,-0.2) [vertex,color=black] {};
		\node at (2.4,4.4) [vertex,color=black] {};
		\node at (2.4,3.8) [square,color=blue] {};
		\node at (2.4,3.2) [square,color=blue] {};
		\node at (2.4,2.6) [vertex,color=black] {};
		\node at (2.4,-1.1) [vertex,color=black] {};
		\node at (2.4,-1.7) [square,color=blue] {};
		\node at (2.4,-2.25) [square,color=blue] {};
		\node at (2.4,-2.8) [vertex,color=black] {};
		
		\draw (-0.5,1.7) -- (2.4,3.8);
		\draw (5.3,1.7) -- (2.4,3.2);
		\draw (-0.5,1.05) -- (2.4,1.05);
		\draw (5.3,0.45) -- (2.4,0.45);
		\draw (-0.5,-0.2) -- (2.4,-1.7);
		\draw (5.3,-0.2) -- (2.4,-2.25);

		%\node at (-0.1,0.3) [label=above:$e_1$] {};
		%\node at (1.4,-0.4) [label=above:$e_2$] {};
		%\node at (4.8,0.17) [label=above:$e_4$] {};
		%\node at (3.45,-0.2) [label=above:$e_3$] {};
		
		\node at (-0.5,-1.2) [label=above:$Y_{i_{1}}$] {};
		\node at (3.4,2.1) [label=above:$Y_{i_{3}}$] {};
		\node at (3.4,-3.2) [label=above:$Y_{i_{5}}$] {};
		\node at (3.4,-0.8) [label=above:$Y_{i_{4}}$] {};
		\node at (5.4,-1.2) [label=above:$Y_{i_{2}}$] {};				
	\end{tikzpicture}
	\caption{A copy of $ K_{2,3}^{+} $ in $ D' $ and its underlying structure. The square vertices are $ v_1,v_2,\dots, v_6 $.}
	\label{figure1}
\end{figure}
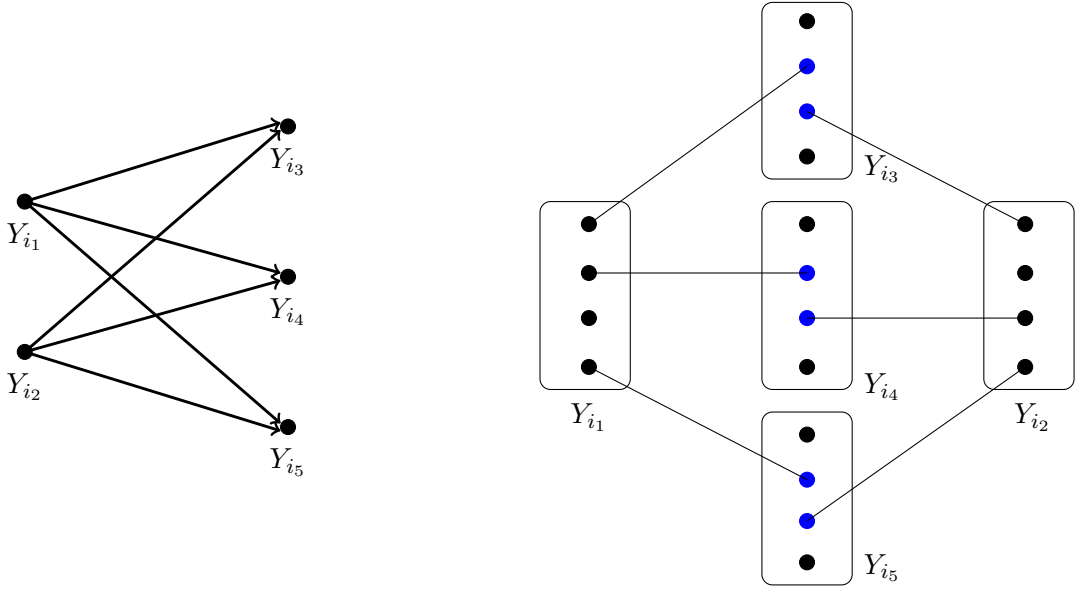

Recall that %$  \mathcal{Y}=\{Y_{t_1+1},Y_{t_1+2},\dots,Y_{t_1+t_2}\} $ and 
$ y_2 $ is the number of edges in $H-R=H[W\cup(\bigcup_{Y_{i}\in \mathcal{Y}}V(Y_{i}))] $.
The analysis of  $ y_2 $  is much more involved.	
For that we introduce the following notation.
Let $ YYY $ represent the set of the edges $\{v_1,v_2,v_3\}$ in $ H $, where $v_1\in V(Y_i)$, $v_2\in V(Y_j)$, $v_3\in V(Y_k)$ and $ i,j ,k \in \{t_1+1,\dots,t_1+t_2\}$ are all distinct. 
Let $ YYW $ represent the set of the edges $\{v_1,v_2,w\}$ in $ H $, where $  v_1\in V(Y_i)$, $v_2\in V(Y_j)$, $w\in W$ and $ i\neq j $.
We say such edge $\{v_1,v_2,w\}$ is reflected in $ G'_{\{Y_i,Y_j\}} $ if $ \{v_1,v_2\}\in E\left(G'_{\{Y_i,Y_j\}}\right) $.
Moreover, let $ YWW $ represent the set of the edges  $\{v_1,w_1,w_2\}$ in $ H $, where $  v_1\in V(Y_i)$, $w_1,w_2\in W $. 
Let $ H'' $ be a subgraph of $ H-R $, consisting of

\begin{enumerate}
	\item all the  edges in $YYW$ not reflected in any $G'_{P}$, $P \in \binom{ \cY }{2} $, 
	\item  all the  edges in $YYW$ containing some edge in  $ G'_{P} $ for some $ P\in \mathcal{D} $, 
	\item all the  edges in $YWW$, 
	\item all the edges in $ W$, and
	\item all the edges that have  at least two vertices covered by a single member of $\mathcal{Y}$.
\end{enumerate}
We claim that $ e(H'')\le 3\varepsilon'n^{3} $.
Indeed, by the definition of $G'_{P}$ and Claim~\ref{D}, we get that the number of edges in (1) and (2) is at most $ 16\varepsilon n\binom{n}{2}+\varepsilon'n^{3} $.
Since  $ \deg(v,W)<\varepsilon'n^{2}  $ for all $ v\in  V(Y_{i})$ with  $ Y_{i}\in\mathcal{Y} $, we have $ |YWW|\leq\varepsilon'n^{3} $. 
By Claim~\ref{c1} and Theorem~\ref{FrFu}, we see $e(H[W])\le n^2$. 
Note that the number of edges that have at least two vertices covered by a single $Y_{j}$ of $\mathcal{Y}$ is at most $ \binom{4}{2}|\cY|n\le6n^{2} $. 
So \[e(H'')\le16\varepsilon n\binom{n}{2}+\varepsilon'n^{3}+\varepsilon'n^{3}+n^2+6n^2\le3\varepsilon'n^{3}.\]
Let $ H':=(V(H), E(H)\setminus E(H'')) $.
Note that  $ E\left(H\left[W\cup \left(\bigcup_{Y_{i}\in \mathcal{Y}}V(Y_{i})\right)\right]\right)=E(H'')\cup YYY\cup YYW $. 
Thus
\[y_{2}=|YYY|+|YYW\cap E(H')|+e(H'')\le |YYY|+|YYW\cap E(H')|+3\varepsilon'n^{3}.\tag{4.6}\label{6.6}\]

For any pair $P\in \binom{ \cY }{2}$, let $ G_{P} $ be  a graph on $ V(G'_P) $ such that if $ P\in \mathcal{D} $, then $ E(G_{P})=\emptyset $, otherwise $ E(G_{P})=E(G'_{P}) $.
In other words, an edge $ e\in E(G_P) $ if and only if $ e $ is contained in at least $ 16\eps n $ edges of the form $ YYW $ in $ H' $.
For a triple $T=\{ Y_{i},Y_{j},Y_{k}\}\in \binom{ \cY }{3}$,
let $G_{T}:= G_{\{Y_{i},Y_{j}\}}\cup G_{\{Y_{j},Y_{k}\}} \cup G_{\{ Y_{k},Y_{i}\}} $ be the tripartite graph on $(V(Y_{i}),V(Y_{j}),V(Y_{k})) $, 	
%Let $V(Y_{j_{1}})=V_{1}$, $ V(Y_{j_{2}})=V_{2} $ and $ V(Y_{j_{3}})=V_{3} $. 
%Let $G_{T}:=\bigcup_{i\in[3] }G_{P_i}$ be the tripartite graph on $(V(Y_{j_{1}}),V(Y_{j_{2}}),V(Y_{j_{3}})) $.  	
%For any triple $ T\in \binom{ \cY }{3} $, we obtain $ G_{T} $ from $ G'_{T} $ by removing all edges $ e\in E(G_P) $ such that $ P\in \mathcal{D} $.
and let $Q_{T}$ be the 3-partite 3-graph that is the  induced tripartite subgraph of $H'$  on $(V(Y_{i}),V(Y_{j}),V(Y_{k})) $. 
Suppose $f(T):= e(G_T) $ and $ g(T):=e(Q_{T}) $, for $ T\in \binom{ \cY }{3} $. 
By Claim~\ref{c3}, we get that  $ f(T)\le 3\times 8=24 $ for any $ T\in \binom{ \cY }{3} $.
Here is our estimate on $ f(T) $ and $ g(T) $, which is crucial in the proof of~\eqref{1.4}.
\begin{lemma} \label{lm2} 
For all but at most $\varepsilon' n^{3}$ triples $ T\in \binom{ \cY }{3} $, we have $ g(T)\le 64-\dfrac{9}{8}f(T) $. 
\end{lemma}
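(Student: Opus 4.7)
The plan is to prove the contrapositive: assume that more than $\varepsilon' n^{3}$ triples $T \in \binom{\cY}{3}$ violate $f(x_T) \le 64 - \frac{9}{8} x_T$, and derive a contradiction with the maximality of $\cT$. The core idea is that each such bad triple $T = \{Y_{i},Y_{j},Y_{k}\}$ should admit an \emph{augmenting configuration}: a set $W_{0} \subseteq W$ of constant size together with $4$ vertex-disjoint copies of $Y$ in $H[V(T) \cup W_{0}]$. Once one such configuration is exhibited, Proposition~\ref{prop}(2) applied with $W' = W_{0}$ reproduces a $Y$-tiling of size $t_{1}$ in $H[R \cup (W \setminus W_{0})]$; combining this with $\cY \setminus \{Y_{i},Y_{j},Y_{k}\}$ and the $4$ new copies yields a $Y$-tiling of size $t_{1}+(t_{2}-3)+4 = |\cT|+1$, contradicting the maximality of $\cT$.

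First I would establish the structural framework. Since every edge of $H'$ with two vertices in one cluster $V(Y_{a})$ is excluded, any copy of $Y$ inside $V(T)$ consists of two $YYY$-edges $\{u,v,w_{1}\}, \{u,v,w_{2}\}$ where $(u,v) \in V(Y_{a}) \times V(Y_{b})$ for some $a \ne b$ and $w_{1},w_{2} \in V(Y_{c})$. Thus such a copy corresponds to a pair $(u,v)$ with $Q_{T}$-codegree $\ge 2$ into the third cluster. A copy of $Y$ obtained from a $G_{T}$-edge consumes the pair itself together with two of its $\ge 16\varepsilon n$ common $W$-neighbours. The augmenting configuration I aim to produce selects one cluster $V(Y_{c})$ as the ``apex'', finds two vertex-disjoint in-$V(T)$ copies of $Y$ exhausting $V(Y_{c})$ and using $2+2$ vertices from $V(Y_{a})\cup V(Y_{b})$, and then picks two $G_{\{Y_{a},Y_{b}\}}$-edges on the remaining $2+2$ vertices of $V(Y_{a})\cup V(Y_{b})$, extending each via two fresh $W$-vertices; the entire configuration uses $|W_{0}| = 4$ vertices of $W$.

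Next I would quantify the threshold forcing such a configuration. Assigning weight $8$ to each potential $YYY$-edge and weight $9$ to each potential $G_{T}$-edge rephrases the desired bound as $8f(x_{T})+9x_{T} \le 512$. I propose a case analysis on the shapes of $G_{P_{1}}, G_{P_{2}}, G_{P_{3}}$ —  each a bipartite graph on $(4,4)$ vertices with matching number at most $2$ (Claim~\ref{c3}), and with vertex covers not concentrated on a single side since $P_{i} \notin \mathcal{D}$ by the definition of $G_{P}$ — showing that whenever $8f(x_{T})+9x_{T} > 512$ one can always find the apex matching in $Q_{T}$ and the disjoint $G_{T}$-matching on the complementary vertices. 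The constants $8$ and $9$ arise as the worst-case trade-off: each ``blocked'' $YYY$-edge prevents one in-$V(T)$ copy through a specific pair, while each ``blocked'' $G_{T}$-edge prevents one $W$-extended copy, and the ratio $9/8$ captures how many $YYY$-edges a single $G_{T}$-edge ``substitutes for'' in the apex accounting. With more than $\varepsilon' n^{3}$ bad triples, any single bad triple $T$ suffices — the $4$ needed $W$-vertices lie well within the $10\varepsilon n$ budget of Proposition~\ref{prop}(2), so the replacement goes through and gives the contradiction.

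The main obstacle will be pinning down the tight constant $\tfrac{9}{8}$ in the case analysis, since the bipartite graphs $G_{P_{i}}$ admit several distinct shapes (a single star, two disjoint stars, $K_{2,3}$, $K_{2,4}$, and so on), and in each case the augmentation fails in slightly different ways. I anticipate introducing, for each pair $P \subseteq T$, a notion of \emph{blocked} $YYY$-edges — those whose simultaneous presence with the known $G_{P}$-structure immediately produces the augmenting configuration — and then summing the resulting blocking inequalities across the three pairs of $T$. A secondary but manageable point is verifying that the four $W$-vertices used can always be chosen outside the reserved set of Proposition~\ref{prop}(2), which follows from the $16\varepsilon n$ codegree lower bound in the definition of $G_{T}$-edges.
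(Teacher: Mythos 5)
Your proposal has a genuine gap centered on the contradiction mechanism. You claim that each triple $T$ violating $f(x_T) \le 64 - \tfrac{9}{8}x_T$ admits four vertex-disjoint copies of $Y$ in $H[V(T)\cup W_0]$ with $|W_0| = 4$, which, combined with Proposition~\ref{prop}(2), would give a $Y$-tiling of size $|\cT|+1$ and contradict the maximality of $\cT$. But if that were true the lemma would have no exceptional triples at all, whereas the statement allows up to $\varepsilon' n^3$ of them --- and the paper's own proof confirms they really can occur. The paper isolates two kinds of problematic triples: $\cT_1$ (those for which $H'[V(T)\cup W]$ contains a $\{Y,E\}$-tiling covering more than 12 vertices) and $\cT_2$ (those with $x_T \ge 17$ and $f(x_T) > 37$). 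A triple in $\cT_1$ typically yields only a mixed $\{Y,E\}$-tiling (for example one copy of $Y$ plus three bare edges covering 13 vertices), which does \emph{not} produce a larger $Y$-tiling and hence does not contradict the maximality of $\cT$. A triple in $\cT_2$ yields no local improvement on its own at all: the paper must pair up two $\cT_2$-triples that share a member of $\cY$, passing through an ordered 3-graph and Lemma~\ref{f4} to find many disjoint copies of $\mathcal V$, and only then does a gain materialize. Your proposal simply never confronts this case.

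Relatedly, the quantity that gets contradicted is not the maximality of $\cT$ but the hypothesis of Lemma~\ref{lem1}, namely that no $\{Y,E\}$-tiling covers $4|\cT|+\varepsilon n$ vertices. Since a single improving triple gains only $O(1)$ vertices, one must aggregate $\Omega(\varepsilon n)$ pairwise vertex-disjoint bad triples to reach the $\varepsilon n$ surplus --- this is exactly what Claim~\ref{c5} does, and it is why the $\varepsilon' n^3$ tolerance appears in the lemma in the first place. The $10\varepsilon n$ budget in Proposition~\ref{prop}(2) is there to accommodate this aggregation, not a single 4-vertex set as you suggest. Finally, while the rephrasing $8f(x_T)+9x_T \le 512$ is correct, your sketched case analysis on the shapes of $G_{P_1},G_{P_2},G_{P_3}$ is never carried out and is not how the paper proceeds: the actual proof runs a five-part case analysis on $x_T$ using Facts~\ref{f0},~\ref{f11},~\ref{f1},~\ref{f2} and the crossing-vertex-cover Claim~\ref{vtxcov}, with the constant $9/8$ emerging as a consequence of those bounds (Table~\ref{table}) rather than being a design parameter of a blocking argument.
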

Now we prove \eqref{1.4} by assuming that Lemma~\ref{lm2} holds.
Since  $ t_2\ge3 $, we have $  |YYY|=|YYY\cap E(H')| = \sum_{T\in\binom{ \cY }{3}}g(T) $ and $ |YYW\cap E(H')|\le \frac{|W|}{t_2-2}\sum_{T\in\binom{ \cY }{3}}f(T) $.
So	\[y_{2}\le\sum_{T\in \binom{ \cY }{3} }g(T)+\frac{|W|}{t_2-2}\sum_{T\in \binom{ \cY }{3}}f(T)+3\varepsilon'n^{3}.\]
Let $ \mathcal{T}'\subseteq \binom{ \cY }{3} $ consisting of $ T\in \binom{ \cY }{3} $ such that for any $ T\in \mathcal{T}' $, $ g(T)\le 64-\dfrac{9}{8}f(T) $.
Note that $ s:=\left|\binom{ \cY }{3}\setminus\mathcal{T}'\right|\le \min\{\eps'n^3,(t_{2})^{3}\}\le(\eps'n^3)^{\frac{2}{3}}\cdot(t_{2})^{3\times\frac{1}{3}}=(\eps')^{\frac{2}{3}}n^2t_2 $.
Then  
\[
\begin{aligned} 
	y'&:=\sum_{T\in\binom{ \cY }{3}\setminus\mathcal{T}'}g(T)+\frac{|W|}{t_2-2}\sum_{T\in\binom{ \cY }{3}\setminus\mathcal{T}'}f(T) 
	\\&\le 64s+\frac{|W|}{t_2-2}\times48s 
	\\&\le64(\eps')^{\frac{2}{3}}n^2t_2+\frac{|W|}{t_2-2}\times48(\eps')^{\frac{2}{3}}n^2t_2
	\\&\le64(\eps')^{\frac{2}{3}}n^2t_2+48(\eps')^{\frac{2}{3}}n^3\times3
	\le208(\eps')^{\frac{2}{3}}n^{3}.
	%\\&=o(n^3).
\end{aligned}
\]
For the case $\frac{|W|}{t_2-2}\le\frac{9}{8}$, we have 
\[
\begin{aligned} 
	y_{2}&\le\sum_{T\in \binom{\cY}{3}} \left( g(T) + \frac{|W|}{t_2-2} f(T) \right) +3\varepsilon'n^{3}
	\\&\le64\binom{t_2}{3}+\sum_{T\in \mathcal{T}'}\left(g(T)+\dfrac{9}{8}f(T)-64\right)+y'+3\varepsilon'n^{3}
	\le64\binom{t_2}{3}+\gamma n^{3}. 
\end{aligned}
\]
On the other hand, if	$\frac{|W|}{t_2-2}>\frac{9}{8}$, then
\[
\begin{aligned} 
	y_{2}&\le \sum_{T\in \binom{\cY}{3}} \left( g(T) + \frac{|W|}{t_2-2} f(T) \right) +3\varepsilon'n^{3}\\
	&=24\binom{t_2}{3}\frac{|W|}{t_2-2}+\sum_{T\in\mathcal{T}'}g(T)+\frac{|W|}{t_2-2}\sum_{T\in \mathcal{T}'}f(T)-24\binom{t_2}{3}\frac{|W|}{t_2-2}+y'+3\varepsilon'n^{3}\\&
	\le8\binom{t_2}{2}|W|+\sum_{T\in \mathcal{T}'}\left(g(T)-\frac{|W|}{t_2-2}(24-f(T))\right)+\gamma n^{3}\\&
	\le8\binom{t_2}{2}|W|+\sum_{T\in \mathcal{T}'}\left(g(T)-\dfrac{9}{8}(24-f(T))\right)+\gamma n^{3}
	\\&\le8\binom{t_2}{2}|W|+37|\mathcal{T}'|+\gamma n^{3}
	\le37\binom{t_2}{3}+8\binom{t_2}{2}|W|+\gamma n^{3}, 
\end{aligned}
\]%211(\eps')^{\frac{2}{3}}n^{3}
where we used $ f(T)\le24 $ for all $ T\in \binom{ \cY }{3} $ in the third inequality.
%The error terms in \eqref{1.5} and \eqref{1.6} come from $YYW$ edges deleted from $ H $ and edges corresponding to at most $ 8\varepsilon n^{3} $ triples in Lemma~\ref{lm2}, and edges that have at least two vertices covered by a single $Y_{j}$ of $\mathcal{Y}$.
Thus the proof of \eqref{1.4} has been completed.
Now it remains to prove Lemma~\ref{lm2}. 	
The following facts will be useful in our proof.

\begin{fact} 	\cite[Fact 4]{large}\label{f0}
Let $a,b$ be integers with $b\ge a\ge2$. 
Let $H$ be a $3$-partite $3$-graph on $V_1,V_2,V_3$ with $|V_1|=|V_2|=a$, $|V_3|=b$ and no matching of size $a$. 
Then $e(H)\le(a-1)ab$.
\end{fact}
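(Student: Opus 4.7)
The plan is to prove Fact~\ref{f0} by induction on $a$.

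For the base case $a=2$, I label $V_1=\{x_1,x_2\}$, $V_2=\{y_1,y_2\}$, and for each $(i,j)\in[2]^2$ set $S_{ij}=\{w\in V_3:\{x_i,y_j,w\}\in E(H)\}$. Since the pairs $\{x_1,y_1\}$ and $\{x_2,y_2\}$ are disjoint in $V_1\cup V_2$, any two edges of types $(1,1)$ and $(2,2)$ with distinct $V_3$-vertices would form a matching of size $2$; this forces $|S_{11}|+|S_{22}|\le b$ (the extremal case being that one of the two sets is empty). The symmetric bound $|S_{12}|+|S_{21}|\le b$ holds for the other pair of disjoint $V_1\times V_2$-types. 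Summing gives $e(H)=\sum_{i,j}|S_{ij}|\le 2b$.

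For the inductive step, I would first strengthen the statement to the asymmetric form: if $H$ is a 3-partite 3-graph with sides $a_1\le a_2\le a_3$ and $\nu(H)<a_1$, then $e(H)\le(a_1-1)a_2a_3$. Given such $H$ with $a_1\ge 3$, I look for a vertex $u\in V_1$ that is \emph{matching-essential}, meaning $\nu(H-u)\le a_1-2$. If one exists, then $H-u$ has sides $a_1-1,a_2,a_3$ with $\nu(H-u)<a_1-1$; the inductive hypothesis yields $e(H-u)\le(a_1-2)a_2a_3$, and combined with the trivial bound $\deg_H(u)\le a_2a_3$ we conclude $e(H)\le(a_1-1)a_2a_3$.

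It remains to handle the case when every $u\in V_1$ is \emph{skippable}, that is, there exists a maximum matching $M_u$ of size $a_1-1$ avoiding $u$. By maximality of $M_u$, every edge through $u$ must meet $V(M_u)\cap(V_2\cup V_3)$, so
\[\deg_H(u)\le a_2a_3-(a_2-a_1+1)(a_3-a_1+1).\]
A direct sum of these bounds over $u\in V_1$ falls short of the target, so the key technical step is a swap argument between two skipping matchings $M_u$ and $M_{u'}$: an alternating walk from $u$ to $u'$ either produces a matching of size $a_1$ in $H$ (contradicting the hypothesis) or forces the sets $V(M_u)\cap V_2$ and $V(M_u)\cap V_3$ to overlap substantially across different choices of $u$, which refines the per-vertex bound enough to close the gap.

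The principal obstacle is this skippable case: the naive degree-sum estimate is too weak by roughly $a_1(a_1-1)$ edges, and the sharp bound $e(H)\le(a_1-1)a_2a_3$ requires exploiting the rigidity imposed by the simultaneous existence of $|V_1|$ near-perfect matchings of $V_1$, each missing exactly one vertex. Carefully executing the exchange argument along alternating walks between these matchings — and using the $3$-partite structure to convert augmenting configurations into matchings of size $a_1$ — is what turns the elementary vertex-by-vertex estimate into the desired sharp inequality.
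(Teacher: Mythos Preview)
The paper does not prove Fact~\ref{f0}; it cites it from~\cite{large}. Evaluating your proposal on its own merits, there is a genuine gap.

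Your base case $a=2$ is correct, but the inductive step is incomplete in the ``skippable'' case. You correctly note that summing the per-vertex bound $\deg_H(u)\le a_2a_3-(a_2-a_1+1)(a_3-a_1+1)$ over $u\in V_1$ falls short of $(a_1-1)a_2a_3$, and you propose an alternating-walk exchange between the matchings $M_u$ and $M_{u'}$ --- but you never carry it out. Exchange arguments for $3$-partite $3$-graph matchings are not routine (there is no K\"onig theorem available here), and nothing in the sketch explains how an augmenting walk actually produces a matching of size $a_1$, or how the claimed ``overlap'' of the sets $V(M_u)\cap V_j$ quantitatively recovers the missing $\Theta(a_1^2)$ edges. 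There is also a smaller structural issue: you introduce the asymmetric strengthening only at the inductive step, but deleting a vertex from $V_1$ lands you in parts of sizes $a_1-1<a_2\le a_3$, so the induction needs the asymmetric base case $a_1=2$ with arbitrary $a_2,a_3$, which your pairing argument (which relies on $|V_1|=|V_2|=2$) does not establish.

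A far shorter argument avoids induction altogether. Take a $1$-factorization $M_1,\dots,M_a$ of the complete bipartite graph $K_{a,a}$ on $(V_1,V_2)$. For each $i$, form the bipartite graph $F_i$ on $(M_i,V_3)$ with $\{e,z\}\in E(F_i)$ iff $e\cup\{z\}\in E(H)$; a matching of size $a$ in $F_i$ would yield a matching of size $a$ in $H$, so $\nu(F_i)\le a-1$, and by K\"onig's theorem $F_i$ has a vertex cover of size at most $a-1$, each of whose vertices has degree at most $b$. Hence $e(F_i)\le(a-1)b$, and summing over $i$ gives $e(H)=\sum_{i=1}^a e(F_i)\le a(a-1)b$. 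This is exactly the $1$-factorization trick the paper itself uses later in the proof of Fact~\ref{f1}.
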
	

%\begin{fact} 	[\cite{large}, Fact 4]\label{f0}Let $a,b$ be integers with $b\ge a\ge2$. Let $H$ be a $3$-partite $3$-graph on $V_1,V_2,V_3$ with $|V_1|=|V_2|=a$, $|V_3|=b$ and no matching of size $a$. Then $e(H)\le(a-1)ab$.\end{fact}	
For $ t<n $ and $ k\ge 2 $, let $ H^{(k)}(t,n) $ be the $k$-partite $k$-graph on $ kn$ vertices consisting of the complete $ k $-partite $ k $-graph with one part of size $ t $ and all other parts of size $ n $, and $ n-t $ isolated vertices.
Moreover, let $ H^{(k)}(t,n)^{-} $  be the (unique) $ k $-partite $ k $-graph obtained from $ H^{(k)}(t,n) $ with one edge removed.
The following result was proven in \cite{Markstr2011Perfect} and we also need a slight strengthening of it (Fact~\ref{f1}).
\begin{fact} \cite[Theorem 4]{Markstr2011Perfect} \label{f11}
For all integers $  k\ge 1 $, $ n \ge3 $, and $ 1 \le t\le n - 1 $, the maximum number of edges in a $k$-partite $k$-graph $ H $ with $n$ vertices in each class and no matching of size $t+1$ is $tn^{k-1}$ and $ e(H)=tn^{k-1}  $ if and only if $H\cong H^{(k)}(t,n) $.
\end{fact}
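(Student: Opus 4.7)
My plan is to prove Fact~\ref{f11} by induction on $k$, reducing first to the shifted (left-compressed) case and then arguing via the structure of the nested links in $V_k$. For the base case $k=1$, edges are single vertices, so $\nu(H) \le t$ forces $e(H) \le t$, with equality iff $H$ consists of exactly $t$ vertices, which matches $K^{(1)}_{t,n}$ up to isomorphism. For $k=2$, I would appeal to K\"onig's theorem: $\nu(H) = \tau(H) \le t$ gives a vertex cover of size at most $t$, and since each cover vertex lies in at most $n$ edges, $e(H) \le tn$; equality forces the $t$ cover vertices to lie inside a single part with degree exactly $n$, yielding $H \cong K^{(2)}_{t,n}$.

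For the inductive step ($k \ge 3$), I will first apply the standard $(a,b)$-shift operators in each coordinate $V_i$ to replace $H$ by a shifted $k$-partite $k$-graph $H'$ with $e(H') = e(H)$ and $\nu(H') \le \nu(H) \le t$, so I may assume $H$ is shifted. Enumerating $V_k = \{v_k^1, \ldots, v_k^n\}$ in the shift order, the links $L_j := L_{v_k^j}$ are shifted $(k-1)$-partite $(k-1)$-graphs on $V_1 \cup \cdots \cup V_{k-1}$ forming a decreasing chain $L_1 \supseteq L_2 \supseteq \cdots \supseteq L_n$, with $e(H) = \sum_{j=1}^{n} e(L_j)$. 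The key claim I would establish is that $L_{t+1} = \emptyset$; once this is in hand, $e(H) = \sum_{j=1}^{t} e(L_j) \le t n^{k-1}$ follows from the trivial bound $e(L_j) \le n^{k-1}$, and equality forces each $L_j$ for $j \le t$ to be the complete $(k-1)$-partite hypergraph and the remaining $L_j$ to be empty, which exactly describes $H \cong K^{(k)}_{t,n}$.

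To prove $L_{t+1} = \emptyset$, I would argue by contradiction: assume $L_{t+1}$ contains an edge $f$. By the downward-closed structure of each shifted $L_j$, every $L_j$ with $j \le t+1$ contains not just $f$ but an entire ``box'' of edges coordinate-wise dominated by $f$. I would then extract pairwise vertex-disjoint $(k-1)$-sets $f_1, \ldots, f_{t+1}$ with $f_j \in L_j$ by a greedy selection along the coordinate axes (translating one coordinate at a time while staying inside the dominated box), and combine them with the distinct vertices $v_k^j$ to produce a matching $\{f_j \cup \{v_k^j\}\}_{j=1}^{t+1}$ of size $t+1$ in $H$, contradicting $\nu(H) \le t$.

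The hard part will be constructing the rainbow system $\{f_j\}$: ensuring pairwise disjointness across the nested links requires a careful use of shiftedness, essentially a Hall-type argument applied to the coordinate slices of $L_1$, and is the technical heart of the proof. Extracting the extremal structure $K^{(k)}_{t,n}$ from the equality case is then routine bookkeeping, since tightness in $e(L_j) = n^{k-1}$ for $j \le t$ and $L_j = \emptyset$ for $j > t$ (together with unshifting) leaves only one isomorphism class.
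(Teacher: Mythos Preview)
Your central claim --- that $L_{t+1} = \emptyset$ in the shifted hypergraph --- is false. Take $H = K^{(k)}_{t,n}$ with the size-$t$ part placed in $V_1$ rather than in $V_k$; this graph is shifted, has no matching of size $t+1$, and attains $e(H) = tn^{k-1}$, yet every link $L_j$ equals the full $K^{(k-1)}_{t,n}$ on $V_1, \ldots, V_{k-1}$ and is nonempty for all $j \in [n]$. Your proposed proof of the claim --- extracting $t+1$ pairwise disjoint $(k-1)$-sets $f_1, \ldots, f_{t+1}$ with $f_j \in L_j$ ``staying inside the dominated box'' --- cannot succeed here: each $L_j$ has a maximum matching of size exactly $t$, limited by the $t$ available vertices in $V_1$, so no family of $t+1$ pairwise disjoint edges exists in $\bigcup_{j\le t+1} L_j$, let alone a rainbow one. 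The ``Hall-type argument'' you invoke is blocked by a genuine obstruction, not a technicality.

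The paper does not itself prove Fact~\ref{f11} (it is quoted from \cite{Markstr2011Perfect}), but its proof of the companion Fact~\ref{f1} exhibits the standard mechanism, which bypasses links entirely: fix a 1-factorization $M_1, \ldots, M_{n^{k-2}}$ of the complete $(k-1)$-partite $(k-1)$-graph on $V_1, \ldots, V_{k-1}$, and for each $M_i$ form the bipartite graph $F_{M_i}$ on parts $M_i$ and $V_k$ with $\{e, v\}$ an edge iff $e \cup \{v\} \in E(H)$. A matching in $F_{M_i}$ lifts to one in $H$, so K\"onig gives $e(F_{M_i}) \le tn$; summing over $i$ yields $e(H) = \sum_i e(F_{M_i}) \le tn^{k-1}$, and equality forces every $F_{M_i} \cong K^{(2)}_{t,n}$, after which comparing the size-$t$ covers across different $M_i$ pins down $H \cong K^{(k)}_{t,n}$. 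If you want to rescue the link approach, the workable target is not $L_{t+1} = \emptyset$ but rather $\sum_{j=1}^{n} m_j \le tn$, where $m_j$ is the matching number of $L_j$ (so that the inductive bound $e(L_j) \le m_j n^{k-2}$ sums to the desired inequality); proving that for nested shifted systems is the real content, and it is a different argument from the one you sketched.
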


\begin{fact} \label{f1}
%	$ K_{3}^{3}(4)^{-}$	is the only $ 3 $-partite $ 3 $-graph with 4 vertices in each class and exactly 47 edges which contains no matching of size four.
For all integers $ k\ge2$, $ n\ge4$ and $ 3\le t\le n-1 $,  $ H^{(k)}(t,n)^{-} $ is the only $k$-partite $ k $-graph with $ n $ vertices in each class and exactly $ tn^{k-1}-1 $ edges which contains no matching of size $ t+1 $.
\end{fact}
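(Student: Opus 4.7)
The plan is to show $H \cong K^{(k)-}_{t,n}$ by a structural analysis, using Fact~\ref{f11} as a tool and inducting on $k$. First, apply Fact~\ref{f11} at level $t-1$: combined with the inequality $e(H) = tn^{k-1} - 1 > (t-1)n^{k-1}$ (which holds since $n \ge 4$), this rules out $\nu(H) \le t-1$ and forces $\nu(H) = t$. Fix a maximum matching $M = \{e_1, \ldots, e_t\}$ and set $U := V(H) \setminus V(M)$, so that $H[U]$ is edgeless and $|U \cap V_j| = n - t$ for each $j \in [k]$.

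For the base case $k = 2$, König's theorem gives a vertex cover $C$ of size $\nu(H) = t$. Writing $t_i := |C \cap V_i|$, if $1 \le t_1 \le t-1$, then inclusion--exclusion bounds the number of edges admitting such a cover by
\[
t_1 n + (t - t_1)n - t_1(t - t_1) = tn - t_1(t - t_1) \le tn - (t - 1),
\]
which is strictly less than $tn - 1 = e(H)$ whenever $t \ge 3$. Hence $C$ lies entirely in a single class, so $H$ is a subgraph of the corresponding $K^{(2)}_{t, n}$, and the edge count forces $H \cong K^{(2)-}_{t, n}$.

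For the inductive step $k \ge 3$, I would analyze the vertex links $H_v := \{e \setminus \{v\} : v \in e \in H\}$ for $v \in V_1$, each a $(k-1)$-partite $(k-1)$-graph on $(V_2, \ldots, V_k)$ with $e(H_v) = \deg_H(v) \le n^{k-1}$ and $\sum_{v \in V_1} e(H_v) = tn^{k-1} - 1$. The constraint $\nu(H) = t$ translates, via a Hall/SDR-type argument on the family $\{H_v\}_{v \in V_1}$, into the statement that at most $t$ of the $V_1$-vertices can lie in pairwise-disjoint edges of $H$; this forces all but $t$ vertices of some class (up to relabeling, $V_1$) to be isolated in $H$. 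Applying Fact~\ref{f11} (or the inductive hypothesis at dimension $k-1$) to the link of a vertex whose $e(H_v)$ is maximal then pins down the nearly complete structure of the active part, leaving exactly one edge missing, and hence $H \cong K^{(k)-}_{t, n}$.

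The main obstacle is this inductive step, specifically ensuring that the single edge defect implied by $e(H) = tn^{k-1} - 1$ localizes to one specific $k$-tuple rather than being distributed as small deficits across several of the links $H_v$. Resolving this requires carefully coupling the link-level applications of Fact~\ref{f11} to a single global structure by exploiting the maximum matching $M$ and (when necessary) an augmenting-style argument on the uncovered set $U$, and I expect the bookkeeping here to be the most delicate part of the proof.
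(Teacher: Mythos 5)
Your base case $k=2$ is correct and matches the paper's in spirit: K\"onig's theorem gives a vertex cover of size $t$, and a short count forces it into a single part. (Your inclusion--exclusion bound is a slightly more explicit version of the paper's ``to cover all $tn-1$ edges, these $t$ vertices must be in the same part.'')

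The inductive step, however, has a genuine gap and is also not the route the paper takes. The step ``the constraint $\nu(H)=t$ translates, via a Hall/SDR-type argument, into $\ldots$ this forces all but $t$ vertices of some class to be isolated in $H$'' is not justified, and it is not a priori true before you already know the full structure of $H$. Having no matching of size $t+1$ does not by itself prevent every vertex of every class from lying in some edge: for example a bipartite graph whose minimum vertex cover has $t_1 \ge 1$ vertices in $V_1$ and $t-t_1 \ge 1$ in $V_2$ can have every vertex nonisolated while still having $\nu = t$. What rules such configurations out is the edge count $e(H) = tn^{k-1}-1$, and extracting the ``one class has $n-t$ isolated vertices'' conclusion from that count is essentially the whole content of the fact --- you cannot assume it as an intermediate step. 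Moreover, the difficulty you correctly flag, namely that the single edge defect could a priori be smeared across many links $H_v$ as a collection of smaller deficits, is exactly where your sketch stops, and ``an augmenting-style argument on $U$'' is not a worked-out resolution. The paper sidesteps link analysis entirely: for $k > 2$ it fixes a 1-factorization $M_1,\dots,M_{n^{k-2}}$ of the complete $(k-1)$-partite $(k-1)$-graph on $(V_1,\dots,V_{k-1})$, forms for each $M_i$ the auxiliary bipartite graph $F_{M_i}$ on $M_i \cup V_k$, applies Fact~\ref{f11} and the $k=2$ case to each $F_{M_i}$ to get a size-$t$ vertex cover $C_i$ lying either inside $M_i$ (type~I) or inside $V_k$ (type~II), and then proves (Claim~\ref{1.10}) that any two perfect matchings of $K^{(k-1)}$ sharing an edge are of the same type with the same cover when type~II. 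Connectivity of the ``share-an-edge'' relation then forces global coherence: either all covers live in $V_k$, giving $H \cong K^{(k)-}_{t,n}$ directly, or all are type~I, in which case one bundles the covers into a $(k-1)$-graph $H'$ with no matching of size $t+1$ and applies Fact~\ref{f11} once more. This 1-factorization mechanism is precisely what localizes the single missing edge to one $k$-tuple, which is the point your sketch leaves open.
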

We postpone the proof of Fact~\ref{f1} to Section 4.3.

An oriented $3$-graph $ F $ is a $3$-graph where each edge is an ordered triple of vertices.
Let $ \Lambda$ be an  oriented $3$-graph consisting of two edges that intersect exactly in the third vertex. 
The following result allows us to find many vertex-disjoint copies of $ \Lambda$ in a dense oriented $3$-graph. 
\begin{lemma} \label{f4}
Let $ 0<1/n\ll\varepsilon $.
Suppose $ F $ is an oriented $3$-graph on $ n $ vertices with $e(F)> 12\varepsilon n^{3} $.
Then $ F $ contains $\varepsilon n$  vertex-disjoint copies of $ \Lambda$.
\end{lemma}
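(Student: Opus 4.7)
The plan is a greedy argument: we extract copies of $\mathcal{V}$ one at a time, deleting the five vertices of each, and show that after fewer than $\varepsilon n$ deletions the remaining ordered 3-graph still contains a copy of $\mathcal{V}$. Since each removed vertex kills at most $\binom{n-1}{2}$ edges, after at most $5\varepsilon n$ deletions we retain at least
\[
12\varepsilon n^{3} - 5\varepsilon n \cdot \binom{n-1}{2} \ge 9\varepsilon n^{3}
\]
edges, so it suffices to show that any ordered 3-graph $F'$ on at most $n$ vertices with $e(F') \ge 9\varepsilon n^{3}$ already contains a single copy of $\mathcal{V}$.

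To locate one, for each vertex $v$ define the \emph{apex link} $L_v := \{(a,b) : (a,b,v) \in E(F')\}$, the set of ordered pairs occupying the first two coordinates when $v$ sits at the third. A copy of $\mathcal{V}$ with apex $v$ corresponds exactly to a pair of elements $(a_1,b_1),(a_2,b_2) \in L_v$ with $\{a_1,b_1\} \cap \{a_2,b_2\} = \emptyset$. We claim that if no such disjoint pair exists, then $|L_v| \le 2(n-1)$: the underlying unordered pairs of members of $L_v$ form a simple graph in which every two edges share a vertex, hence a graph with matching number at most one, which must be a subgraph of a star or a triangle and so has at most $n-1$ edges; since each unordered pair yields at most two ordered pairs, $|L_v| \le 2(n-1)$.

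Now $\sum_v |L_v| = e(F') \ge 9\varepsilon n^{3}$, so by averaging some $v$ satisfies $|L_v| \ge 9\varepsilon n^{2} > 2(n-1)$ for $1/n \ll \varepsilon$. The claim then produces two vertex-disjoint ordered pairs in $L_v$, and hence a copy of $\mathcal{V}$ at apex $v$; we delete its five vertices and iterate. The main (and only mild) obstacle is the structural claim about link graphs with no matching of size two, which is a classical observation that every graph with matching number one is a subgraph of a star or a triangle; everything else is routine deletion bookkeeping.
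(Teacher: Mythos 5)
Your overall approach---greedy extraction paired with an averaging argument over apex links---is a clean variant of the paper's strategy. The paper first performs a one-time clean-up that deletes every edge whose third coordinate has apex-degree at most $6\varepsilon n^2$ (removing at most $6\varepsilon n^3$ edges); afterwards any surviving edge can be extended to a copy of $\mathcal{V}$ through its third vertex. You skip the clean-up and instead re-average at every step to locate a vertex with a large apex link. Both routes hinge on the same structural observation, namely that $|L_v|\le 2(n-1)$ when $L_v$ contains no two disjoint pairs (the star-or-triangle bound for intersecting families of $2$-sets), and that part of your argument is correct, as is the identity $\sum_v|L_v|=e(F')$.

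The gap is in the deletion bookkeeping. You assert that removing one vertex destroys at most $\binom{n-1}{2}$ edges, but that is the count for an \emph{unordered} $3$-graph. In an ordered $3$-graph a vertex may occupy any one of the three coordinates of an edge, so it can lie in as many as $3(n-1)(n-2)$ ordered edges. With the correct bound, deleting up to $5\varepsilon n$ vertices can kill roughly $15\varepsilon n^{3}$ edges, which already exceeds the initial $12\varepsilon n^{3}$; so the claimed $9\varepsilon n^{3}$ surviving edges (indeed any positive number) is not guaranteed, and the greedy step cannot be repeated $\varepsilon n$ times as written. Your averaging step in fact only needs $e(F'')>2n(n-1)$ to produce one more copy of $\mathcal{V}$, so the argument is salvageable, but one must either account for destroyed edges more carefully (for instance, via the paper's apex-degree clean-up so that a surviving edge can always be extended directly) or start with a larger constant in place of $12$. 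As stated, the arithmetic does not close.
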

\begin{proof}
Suppose $ V(F):=\{v_1,v_2,\dots,v_n\} $.
We apply the following procedure iteratively and  get a subgraph of $ F $, denoted by $ F' $,  which satisfies that for each $v_{i}\in V $, $ |\{(j,k):(v_{j},v_{k},v_{i} )\in E(F')\}|$ is either zero or more than $ 6\varepsilon n^2 $. 
Whenever there is $ v_{i} $ that is in at most $  6\varepsilon n^2 $ edges  containing $ v_{i} $ as the third vertex, we delete all such edges.  
%containing $ v_i $ as the first or third vertex.
% $ |\{(v_j,v_k):(v_i,v_j,v_k)\in E(F)\}|\le 8\varepsilon n^2 $, then we delete all the edges $ (v_i,v_j,v_k),(v_p,v_q,v_i )\in E(F), j,k,p,q\in [n] $. 
Note that when the process ends, the number of edges deleted is at most $ 6\varepsilon n^3 $.

We can greedily find $\varepsilon n$ vertex-disjoint copies of $ \Lambda$ in $ F' $ one by one. 
Indeed, suppose we have found $s<\varepsilon n$  vertex-disjoint copies of $ \Lambda$.
After the removal of their vertices from $ F' $, we obtain $ F'' $ with $ e(F'')\ge 12\varepsilon n^{3}-6\varepsilon n^3-5\varepsilon n^{3}\ge \varepsilon n^{3} $.
For  each $v_i\in V(F'') $, $ |\{(j,k):(v_j,v_k,v_i )\in E(F'')\}|$ is either zero or more than $ 6\varepsilon n^2-5sn> 6\varepsilon n^2-5\varepsilon n^2=\varepsilon n^2 $. 
So we can fix an edge $ (v_{i_1},v_{i_2},v_{i_3})\in E(F'') $, then  $ |\{(j,k):(v_j,v_k,v_{i_3})\in E(F'')\}|\ge\varepsilon n^2  $.
We choose $v_{i_4},v_{i_5}$ such that $ (v_{i_4},v_{i_5},v_{i_3}) \in E(F'')$, which together with $ (v_{i_1},v_{i_2},v_{i_3}) $ forms a copy of $ \Lambda$.
\end{proof}
%\textbf{Remark}: In an ordered graph, since there can be $ \varepsilon' n^{2} $ edges containing a fixed vertex $ v $ as the first or second vertex,  the vertex degree more than $ 6\varepsilon n^{2} $  does not guarantee that an edge can be extended to a copy of $ \mathcal{V} $.
%So the iterative procedure in the proof of Lemma~\ref{f4} can not be reduced to checking vertex degrees. 
%\\

\subsection{The proof of  Lemma~\ref{lm2}.}
As mentioned earlier, the core of the proof of  Lemma~\ref{lm2} is to bound the number of small configurations that can locally improve the $\{Y, E\}$-tiling.
These configurations are encoded in the following families $\cT_1$ and $\cT_2$. %and the final estimate can be found in Claim~\ref{c5}.
Recall that by the definition of $ G_P $, for $ P\in \binom{\mathcal{Y}}{2} $, if $ P\in \mathcal{D} $, then $ E(G_{P})=\emptyset $.
We consider two families of triples.
Let 
\[\cT_1:=\left\{T\in \binom{\cY}{3}: \text{there is a $\{Y, E\}$-tiling in $ H'[V(T)\cup W] $ covering more than 12 vertices}\right\} \] and 
\[\cT_2:=\left\{T\in \binom{\cY}{3}\setminus \cT_1: f(T)\ge17, g(T)>37\right\}. \]
We will bound the sizes of these two families in  Claim~\ref{c5} and then show the claimed bound for every triple not belonging to either family.

For an arbitrary $ T \in \binom{\cY}{3}\setminus  \cT_1 $, there is no $\{Y, E\}$-tiling on $ V(T)\cup W $ covering more than 12 vertices. 	
A \emph{star} is a graph all of whose edges share a common vertex.
We call the common vertex the \emph{center} of the star. 
For convenience, by a \emph{transversal vertex cover}, we mean a vertex cover with one vertex from each part.
\begin{claim}\label{vtxcov}
For $ T \in \binom{\cY}{3}\setminus  \cT_1 $, if $ f(T)\ge17 $, then	$G_{T}$ has a transversal vertex cover of size three.
\end{claim}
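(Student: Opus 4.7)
I will prove the claim by contradiction. Suppose $G_T$ has no crossing vertex cover of size three. I then aim to exhibit a $\{Y,E\}$-tiling in $H'[V(T)\cup W]$ covering more than $12$ vertices, contradicting $T\notin\cT_1$.

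The first and main step is purely combinatorial: I will show that $x_T\ge 17$ together with the absence of a crossing $3$-cover forces $G_T$ to contain a matching of size four. Recall that $G_T$ is a tripartite graph on parts $A=V(Y_i)$, $B=V(Y_j)$, $C=V(Y_k)$ of size four each, that by Claim~\ref{c3} each bipartite piece $G_{P_r}$ has no matching of size three and so (by K\H{o}nig) a vertex cover of size at most two, and that if $e(G_{P_r})\ge 6$ then the minimum bipartite cover must be ``crossing'' (one vertex in each spanned part), since a same-part cover together with $e(G_{P_r})\ge 6$ would put $P_r\in\mathcal D$ and force $G_{P_r}=\emptyset$. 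A quick count gives $e(G_{P_r})\le 7$ in all cases: a same-part $2$-cover is ruled out once $e(G_{P_r})\ge 6$, and a crossing $2$-cover covers at most $|A|+|B|-1=7$ edges in a $4\times 4$ bipartite graph. Since $5+5+5=15<17$, at least one $G_{P_r}$ has $\ge 6$ edges and hence a crossing bipartite cover. I plan to case-split on the multiset $(e(G_{P_1}),e(G_{P_2}),e(G_{P_3}))$ and the positions of the chosen cover vertices, and then either merge the three bipartite covers into a single crossing vertex cover of $G_T$ of size three (contradicting our standing assumption), or identify the obstruction to such a merge as a set of four vertex-disjoint edges in $G_T$, i.e., a matching of size four.

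The second step is the extension. Given a matching $\{e_1,e_2,e_3,e_4\}\subseteq E(G_T)$, each $e_i$ has at least $16\varepsilon n$ vertices $w\in W$ with $e_i\cup\{w\}\in E(H)$ by definition of $G_T$. Choosing two distinct such vertices $w_i^1,w_i^2\in W$ for each $i$, with all eight chosen vertices distinct across $i\in[4]$ (possible because $16\varepsilon n\gg 8$), yields four vertex-disjoint copies of $Y$ in $H'[V(T)\cup W]$, covering $16>12$ vertices. Hence $T\in\cT_1$, a contradiction.

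The principal obstacle is the combinatorial step: showing that the non-existence of a crossing $3$-cover forces a matching of size four once $x_T\ge 17$. The threshold $17$ is presumably tight, so the argument has to exploit the seventeenth edge genuinely. The case analysis will be delicate, especially the sub-case where exactly one bipartite piece has a crossing cover (forcing $e(G_{P_1})=7$ and $e(G_{P_2})=e(G_{P_3})=5$, up to relabeling). In that regime the two same-part-covered pieces have highly constrained structure, and it is here that the star language introduced just before the claim should be most useful in pinpointing the disjoint edges needed for the fourth matching edge.
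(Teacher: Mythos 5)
Your plan is logically equivalent to the paper's (you argue the contrapositive: no crossing $3$-cover plus $x_T\ge17$ forces a matching of size four, whereas the paper first shows $T\notin\cT_1$ rules out a size-$4$ matching and then derives the crossing cover), and the preliminary observations ($e(G_{P_r})\le 7$, at least one bipartite piece has $\ge 6$ edges and hence a crossing $2$-cover) are correct. The extension step -- converting a size-$4$ matching in $G_T$ into a $\{Y,E\}$-tiling of $16$ vertices via the $16\varepsilon n$-rich triples -- is also fine.

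However, the core combinatorial step is not actually proven; you explicitly flag it as ``the principal obstacle'' and outline a case split without executing it. This is where all the difficulty lies, and the threshold $x_T\ge 17$ is indeed used in a genuine way. Moreover your proposed strategy (``merge the three bipartite covers into a single crossing $3$-cover'') is not obviously well-posed: when $e(G_{P_r})\le 5$, that bipartite piece may have a singleton cover or a same-part $2$-cover, and there is no canonical way to ``merge'' these with the crossing cover of the dense piece. The paper avoids this by taking a different tack: fix a crossing $2$-cover $\{v_1,v_2\}$ of the bipartite piece with $\ge 6$ edges, show $G_T-\{v_1,v_2\}$ has no matching of size two (using $e(G_1)\in\{6,7\}$ to extract two more independent edges through $v_1$ and $v_2$), conclude $G_T-\{v_1,v_2\}$ is a star, and then use $x_T\ge 17$ again to force the star center into the third part. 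None of this is in your write-up, so as it stands the proof is incomplete.
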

\begin{proof}
Let $ T=:\{Y_{i_1},Y_{i_2},Y_{i_3}\}$, $ V_{j}:=V(Y_{i_j})$ for $j\in[3] $, $ G_1:=G_{\{Y_{i_1},Y_{i_2}\}} $, $ G_2:=G_{\{Y_{i_2},Y_{i_3}\}} $ and $ G_3:=G_{\{Y_{i_3},Y_{i_1}\}} $. Recall that $ \mathcal{D} $ is the collection of $ P\in \binom{\cY}{2} $ such that $ e(G'_P)\ge6 $  and $ G'_P $ has a vertex cover of two vertices which are in the same part.
We may assume that $ \{Y_{i_j},Y_{i_k}\}\notin \mathcal{D}$ for $ j,k\in[3] $, as otherwise $ f(T)\le2\times8=16 $, which is a contradiction.
Note that $ G_{T} $ contains no matching of size four, as otherwise, there exists a $ Y $-tiling of size four containing this matching and eight vertices in $ W $, contradicting that $ T\notin \cT_{1} $.
Without loss of generality, suppose $ e(G_{1})\ge e(G_{2})\ge e(G_{3}) $.
Since $ \{Y_{i_j},Y_{i_k}\}\notin \mathcal{D}$ for $ j,k\in[3] $ and by Claim~\ref{c3}, $ G_{i} $ contains no matching of size three, we have $ e(G_{i})\le7$ for $i\in[3]  $.
Since $ f(T)\ge17 $, we have $ e(G_{1})\ge6 $ and $ e(G_{3})\ge3  $.
We suppose that $ v_1\in V_1, v_2\in V_2 $ form a  vertex cover of $ G_1 $.

We claim that $ G_{T}-\{v_1,v_2\} $ has no matching of size two.
Indeed, suppose $e_1,e_2\in E(G_{T}-\{v_1,v_2\}) $ are disjoint.
Note that  $ E(G_{1}-\{v_1,v_2\})=\emptyset $, therefore $ |(e_1\cup e_2)\cap V_1|+|(e_1\cup e_2)\cap V_2|=2  $.
For the case $ e(G_{1})=7 $, we have $  d_{G_1}(v_1)=d_{G_1}(v_2)=4 $ and  we can greedily find $e_3\in E(G_{T}) $  containing $ v_1 $ and $ e_4\in E(G_{T}) $ containing $ v_2 $, such that $ \{e_1,e_2,e_3,e_4\} $ is a matching of size four, which is a contradiction.
For the case $ e(G_{1})=6 $, we see that $ e(G_{2})=6 $, $ e(G_{3})\ge5 $.
So $ e(G_{i}-\{v_1,v_2\})\ge1 $ for $ i\in\{2,3\} $.
We may assume $ e_1\in E(G_2)$ and $e_2\in E(G_3) $, as if $ e_1,e_2\in E(G_{i})  $, where $ i\in \{2,3\} $, then we can find $ e' $ in the other bipartite graph $ G_{j} $, $ j\in \{2,3\} $, $ j\not=i $ such that  $ e' $ is disjoint from $ e_1 $ or $ e_2 $.
Note that $ |(e_1\cup e_2)\cap V_1|=1 $ and $ |(e_1\cup e_2)\cap V_2|=1  $.
Since $ e(G_{1})=6 $, we have $ d_{G_1-v_2}(v_1)$, $d_{G_1-v_1}(v_2)\ge2 $.
So we can also greedily find $e_3,e_4\in E(G_{T}) $  containing $ v_1,v_2 $, such that $ \{e_1,e_2,e_3,e_4\} $ is a matching of size four, which is a contradiction.
Thus $ G_{T}-\{v_1,v_2\} $ has no matching of size two, implying $ G_{T}-\{v_1,v_2\} $ is a star or a triangle.
Since  $ E(G_{1}-\{v_1,v_2\})=\emptyset $, we get that $ G_{T}-\{v_1,v_2\} $ is a star with the center denoted by $ v_3 $.
If $ v_3\in V_1$ (or $V_2$), then the vertex cover of $ G_2 $(or $G_3$) is a single vertex  and the vertex cover of  $ G_3 $ (or $G_2$) is either a single vertex or a pair of vertices in the same part. 
Note that there are at most  four edges in $ G_2 $ (or $G_3$) and at most five edges in $ G_3 $ (or $G_2$) because $ \binom{T}{2} \cap \mathcal{D}=\emptyset $. 
Thus, we obtain $ f(T)\le 7+4+5=16 $, which is a contradiction.
So $ v_3\in V_3 $ and $\{ v_{1},v_{2},v_{3}\}$  is  a transversal vertex cover of $G_{T}$.
\end{proof}

We have the following fact about $ G_{T} $.
\begin{fact} \label{f2}
Suppose $ T:=\{Y_{i_1},Y_{i_2},Y_{i_3}\}\in \binom{\cY}{3}\setminus  \cT_1 $ such that $ Q_T $ has a perfect matching $\{e_1,e_2,e_3,e_4\} $. 
Then the following holds for all distinct $ p, q\in [4] $.
\begin{enumerate}
	\item The number of edges in $ G_T[e_p,e_q]$ is no more than two and  $ G_T= \bigcup_{ p,q\in[4], p\neq q}G_T[e_p,e_q]$.  In particular, $ f(T)\le12 $.  %\bigsqcup
	\item   If $ e(G_T[e_p,e_q])=1 $, then $ e(Q_T[e_p,e_q])\le6 $;
	if	$ e(G_T[e_p,e_q])=2 $, then $ e(Q_T[e_p,e_q])\le5 $.
\end{enumerate}
\end{fact}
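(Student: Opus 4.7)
The plan is to exploit the fact that $T\notin \cT_1$ means there is no $\{Y,E\}$-tiling on $V(T)\cup W$ covering $13$ or more vertices, while the perfect matching $\{e_1,e_2,e_3,e_4\}$ already covers all $12$ vertices of $V(T)$. Any local swap that replaces some matching edges by more vertex-absorbing pieces will therefore yield a contradiction. The single mechanism used throughout is that any edge $\{u,v\}\in E(G_T)$ has at least $16\varepsilon n$ extensions $w\in W$ with $\{u,v,w\}\in E(H')$; in particular, by choosing two such $w_1,w_2$ we obtain a copy of $Y$ on $\{u,v,w_1,w_2\}$, and we always have enough room to keep different pieces vertex-disjoint.

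For part (1), first I would rule out any $G_T$-edge contained in a single $e_p$: if $\{u,v\}\in E(G_T)$ and $e_p=\{u,v,z\}$, picking any $w'\in W$ with $\{u,v,w'\}\in E(H)$ gives a copy of $Y$ on $\{u,v,z,w'\}$, which together with the other three matching edges covers $4+3\cdot 3=13$ vertices of $V(T)\cup W$, contradicting $T\notin \cT_1$. Hence $G_T=\bigcup_{p\ne q}G_T[e_p,e_q]$. Next I would show $G_T[e_p,e_q]$ has no matching of size two: a matching $\{a_1,b_1\},\{a_2,b_2\}$ with $a_i\in e_p$ and $b_i\in e_q$ extends to two vertex-disjoint copies of $Y$, namely $\{a_i,b_i,w_{2i-1},w_{2i}\}$ using four distinct vertices of $W$, and together with the remaining two matching edges $e_r,e_s$ covers $2\cdot 4+2\cdot 3=14$ vertices, again contradicting $T\notin \cT_1$. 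Because a vertex of $e_p$ in $V(Y_{i_j})$ can only be $G_T$-adjacent to vertices of $e_q$ in the other two parts, $G_T[e_p,e_q]$ is a subgraph of $K_{3,3}$ minus a perfect matching (a $6$-cycle); combined with the absence of a matching of size two, this forces $e(G_T[e_p,e_q])\le 2$, and summing over the six pairs gives $x_T\le 12$.

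For part (2), I would argue uniformly via a single observation: for any $\{u,v\}\in E(G_T[e_p,e_q])$, no $3$-edge $e'\in E(Q_T[e_p,e_q])$ can be vertex-disjoint from $\{u,v\}$, since otherwise extending $\{u,v\}$ to a copy of $Y$ with two vertices of $W$ and appending $e'$ together with the other two matching edges $e_r,e_s$ yields a $\{Y,E\}$-tiling on $V(T)\cup W$ covering $4+3+3+3=13$ vertices, a contradiction. It then suffices to count the $3$-edges of the ambient complete $3$-partite $3$-graph on $e_p\cup e_q$ (which has $2\cdot 2\cdot 2=8$ possible $3$-edges) that are disjoint from the given $G_T$-edges. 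When $e(G_T[e_p,e_q])=1$, a direct enumeration shows exactly two $3$-edges are disjoint from $\{u,v\}$, giving $e(Q_T[e_p,e_q])\le 8-2=6$. When $e(G_T[e_p,e_q])=2$, part (1) forces the two edges to share a vertex, say $u$, with neighbors $v_1,v_2$ lying in the two different parts of $e_q$; the two $3$-edges disjoint from $\{u,v_1\}$ and the two disjoint from $\{u,v_2\}$ overlap in exactly one common $3$-edge, producing three forbidden $3$-edges in total and $e(Q_T[e_p,e_q])\le 8-3=5$.

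The only point to monitor throughout is vertex-disjointness of the constructed $\{Y,E\}$-tilings, which is immediate from $W\cap V(T)=\emptyset$ and from each $G_T$-edge having at least $16\varepsilon n$ available $W$-extensions, so we can always pick fresh $w$'s avoiding the handful of vertices used so far; no serious obstacle is expected, as the whole proof reduces to the two small bipartite/$3$-partite counts described above.
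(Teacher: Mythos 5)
Your proposal is correct and follows essentially the same approach as the paper: rule out edges of $G_T$ inside a single $e_p$ and matchings of size two in $G_T[e_p,e_q]$ by extending $G_T$-edges to copies of $Y$ via $W$ and counting covered vertices, then use the tripartite structure to cap each $G_T[e_p,e_q]$ at two edges, and for part~(2) forbid the $Q_T$-edges that are disjoint from each $G_T$-edge of $G_T[e_p,e_q]$ and count the forbidden edges ($2$ resp.\ $3$). The only cosmetic difference is that you phrase the $e(G_T[e_p,e_q])\le 2$ step via the $C_6$ structure and a direct inclusion-exclusion count of forbidden $3$-edges, whereas the paper invokes K\H{o}nig's theorem and works with explicitly labeled vertices; the content is identical.
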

\begin{proof}
Let $ V_1:=V(Y_{i_1})=\{h_1,h_2,h_3,h_4\} $, $ V_{2}:=V(Y_{i_2})=\{k_{1},k_{2},k_{3},k_4\}$, and $V_{3}:=V(Y_{i_3})=\{g_{1},g_{2},g_{3},g_4\}$ such that $ e_i=\{h_i,k_i,g_i\}$ for $ i\in[3] $.
We claim that $ E(G_T[e_p])=\emptyset$ for $ p\in[4] $. 
Indeed, without loss of generality, suppose $h_1k_1\in E(G_T) $. 
By the definition of $G_{T}$, we can find vertices $w_1,w_2\in W$ such that $h_1,k_1,w_1,w_2$ form a copy of $Y$, which together with $e_2,e_3,e_4 $ forms a $\{Y, E\}$-tiling covering 13 vertices, contradicting that $ T\notin \cT_1 $.
Thus  $ G_T=\bigcup_{ p,q\in[4], p\neq q}G_T[e_p,e_q]$.

Next we claim that $ G_T[e_1,e_2] $ has no matching of size two.  %for all $ p,q\in[4] $, $ p\neq q $.
%Without loss of generality, we consider $ G_T[e_1,e_2] $.
Suppose $ G_T[e_1,e_2] $ has two disjoint edges $ e_1' $ and $ e_2' $. 
Thus we can find two disjoint copies of $ Y $ containing these two edges and four vertices in $ W $. 
These together with $ e_3 $ and $ e_4 $ give  a $\{Y, E\}$-tiling covering 14 vertices, contradicting that $ T\notin \cT_1 $. 
So  $ G_T[e_1,e_2] $ has no matching of size two.
By K\"{o}nig's theorem, there is a vertex cover in $ G_T[e_1,e_2] $ of size one.
Note that $ E(G_T[V_i])=\emptyset$ for $ i\in[3] $.
Thus $ e(G_T[e_1,e_2])\le2$ and by symmetry we get $ f(T)=\sum_{ p,q\in[4], p\neq q}e(G_T[e_p,e_q])\le 6\times 2=12  $.

For (2) of the fact, for the case $ e(G_T[e_1,e_2])=1 $, we suppose $h_2k_1\in E(G_T) $.
Thus we have $h_1k_2g_1\notin E(Q_T) $, because otherwise there exist $w_1,w_2\in W$ such that $h_2k_1w_1, h_2k_1w_2\in E(H)$, together with $ e_3,e_4 $ and $h_1k_2g_1$  give  a $\{Y, E\}$-tiling covering 13 vertices, a contradiction. Similary, we have $ h_1k_2g_2\notin E(Q_T) $.
Thus $ e(Q_T[e_1,e_2])\le 8-2=6 $.

For the case $ e(G_T[e_1,e_2])=2 $, suppose  $ e_1',e_2'\in E(G_T[e_1,e_2]) $. 
Since $ G_T[e_1,e_2] $ has no matching of size two, we have $ |e_1'\cap e_2'|=1 $.
Without loss of generality, suppose $h_2k_1,g_2k_1\in E(G_T)  $, similar arguments as in the last paragraph, we get $ h_1k_2g_1,h_1k_2g_2,h_2k_2g_1\notin E(Q_T) $. 
Thus $ e(Q_T[e_1,e_2])\le 8-3=5 $.
\end{proof}

Note that any such $ \{Y, E\} $-tiling in the definition of $ \cT_1 $ uses an edge of the form $ YYW $, and by the definition of $ H' $, for such a pair in $ T $, there are at least $ 16\eps n $ choices for the vertex in $ W $. 
This flexibility is crucial in the following claim, which shows that both $ \cT_1 $ and $ \cT_2 $ are small.
\begin{claim}\label{c5}
$ |\cT_1|+|\cT_2|\le \varepsilon'n^{3} $.
\end{claim}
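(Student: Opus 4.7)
The plan is to suppose that $|\cT_1|+|\cT_2|>\varepsilon' n^3$ and derive a contradiction to the hypothesis of Lemma~\ref{lem1}, namely that no $\{Y,E\}$-tiling in $H$ covers more than $4|\cT|+\varepsilon n$ vertices. Since the two families are disjoint subsets of $\binom{\cY}{3}$, at least one has size more than $\varepsilon' n^3/2$, and the argument splits accordingly.

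For the case $|\cT_1|\ge \varepsilon' n^3/2$, view $\cT_1$ as a 3-uniform hypergraph on $\cY$ (recall $|\cY|=t_2\le n$). Each vertex of $\cY$ lies in at most $\binom{t_2-1}{2}\le n^2/2$ triples of $\cT_1$, so a standard greedy procedure yields $\varepsilon n$ pairwise vertex-disjoint triples $T^{(1)},\ldots,T^{(\varepsilon n)}\in\cT_1$. For each $T^{(i)}$, the definition supplies a $\{Y,E\}$-tiling $\mathcal F_i$ in $H'[V(T^{(i)})\cup W]$ covering at least $13$ vertices, of which only $O(1)$ lie in $W$. By the construction of $H'$, every $YYW$-edge appearing in such a tiling has $\ge 16\varepsilon n$ choices of its $W$-vertex, so the $\mathcal F_i$'s can be chosen to use pairwise disjoint $W$-vertices; let $W^*$ be the union of these $W$-vertices, with $|W^*|\le 10\varepsilon n$. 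Proposition~\ref{prop}(2) applied with $W'=W^*$ yields a $Y$-tiling $\mathcal Y''$ of size $t_1$ in $H[R\cup(W\setminus W^*)]$. Then
\[
\mathcal Y''\;\cup\;\bigl(\mathcal Y\setminus\{Y_j:Y_j\in T^{(i)}\text{ for some }i\}\bigr)\;\cup\;\bigcup_{i=1}^{\varepsilon n}\mathcal F_i
\]
is a vertex-disjoint $\{Y,E\}$-tiling in $H$ covering at least $4t_1+4(t_2-3\varepsilon n)+13\varepsilon n=4|\cT|+\varepsilon n$ vertices, a contradiction.

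For the case $|\cT_2|\ge\varepsilon' n^3/2$, fix $T\in\cT_2$: so $x_T\ge 17$, $f(x_T)>37$, and $T\notin\cT_1$. Claim~\ref{vtxcov} provides a crossing vertex cover $\{v_1,v_2,v_3\}$ of $G_T$, one per part of $V(T)$. Since $x_T>12$, Fact~\ref{f2} (contrapositive) forces $Q_T$ to have no matching of size $4$, while $f(x_T)\ge 38$ pushes $Q_T$ very close to the extremal $K^{(3)}_{3,4}$ of Fact~\ref{f11}. The plan is to show that in every such near-extremal configuration, one can exhibit three pairwise disjoint edges $e_1,e_2,e_3\in Q_T$ (typically sitting inside the dense $3\times 3\times 3$ sub-structure $Q_T-\{v_1,v_2,v_3\}$, whose edge count exceeds the matching threshold forced by Fact~\ref{f11}) together with a $G_T$-edge $\{v_i,v_j\}\subseteq\{v_1,v_2,v_3\}$ disjoint from $V(e_1)\cup V(e_2)\cup V(e_3)$; extending this $G_T$-edge into a copy of $Y$ via two vertices of $W$ then produces a $\{Y,E\}$-tiling on $V(T)\cup W$ covering $4+3\cdot 3=13$ vertices of $H'[V(T)\cup W]$, contradicting $T\notin\cT_1$. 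Thus $\cT_2$ is empty under these hypotheses, and in particular $|\cT_2|\le\varepsilon' n^3$.

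The main obstacle is the structural argument for $\cT_2$: pinning down the near-extremal configurations of $Q_T$ compatible with a size-$3$ crossing cover of $G_T$ and verifying that the required matching-plus-disjoint-cover-edge always exists. Even though $f(x_T)\ge 38$ is quite close to the cap $48$ given by Fact~\ref{f11}, the freedom in placing the missing edges of $Q_T$ together with the placement of $\{v_1,v_2,v_3\}$ introduces several subcases; the cleanest way is a careful double-counting comparing the edges of $Q_T$ touching the cover against those avoiding it, together with a secondary application of Fact~\ref{f11} to the residual $(3,3,3)$-tripartite hypergraph $Q_T-\{v_1,v_2,v_3\}$. The $\cT_1$ branch, by contrast, is essentially routine once the greedy selection of disjoint triples and the $W$-vertex bookkeeping for Proposition~\ref{prop}(2) are set up correctly.
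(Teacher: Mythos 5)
Your treatment of $\cT_1$ is essentially the same as the paper's (a greedy extraction of $\varepsilon n$ pairwise disjoint triples, each yielding a small local $\{Y,E\}$-tiling with at most $8$ vertices in $W$, the disjointness of $W$-vertices secured by the $16\varepsilon n$ flexibility built into $H'$, and Proposition~\ref{prop}(2) patching up $R\cup W$), and that branch is fine.

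The $\cT_2$ branch, however, has a genuine gap, and it is exactly the gap the paper's more elaborate argument is designed to close. You try to derive a contradiction from a \emph{single} $T\in\cT_2$, which would force $\cT_2=\emptyset$ — already a warning sign, since the paper bothers to define $\cT_2$ precisely because this is not attainable. Concretely, two steps fail. First, there is no reason a pair $\{v_i,v_j\}\subseteq\{v_1,v_2,v_3\}$ must be an edge of $G_T$: a crossing vertex cover of $G_T$ only says every edge of $G_T$ is incident to some $v_i$, and $e(G_T[V_i,V_j])\ge 6$ with $\{v_i,v_j\}$ covering that bipartite piece is perfectly consistent with $\{v_i,v_j\}\notin E(G_T)$ (e.g.\ $d(v_i)=d(v_j)=3$, $v_iv_j$ a non-edge). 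Second, and more seriously, your counting for the residual $3\times3\times3$ hypergraph $Q_T-\{v_1,v_2,v_3\}$ is far off: the number of edges of $Q_T$ touching the cover can be as large as $3\cdot16-3\cdot4+1=37$, so $f(x_T)\ge38$ only guarantees \emph{one} edge outside the cover, nowhere near the $>2\cdot 3^2=18$ needed by Fact~\ref{f11} to force a size-$3$ matching. Even relaxing to a size-$3$ matching in $Q_T-\{v_i,v_j\}$ (a $3\times3\times4$ subgraph with $\ge 10$ edges) falls short of the $>24$ threshold from Fact~\ref{f0}. A single triple simply does not leave enough room. This is precisely why the paper passes to pairs of triples sharing one member of $\cY$: it builds an ordered $3$-graph $F$ on $\cY$ whose edges encode $\cT_2$-triples with a distinguished heavy pair, applies Lemma~\ref{f4} to find $\varepsilon n$ vertex-disjoint copies of the bow-tie $\mathcal V$, and then, for each such pair $(T',T'')$ with crossing covers $\{v_1,v_2,v_3\}$ and $\{v_3',v_4,v_5\}$, finds five disjoint $G$-edges plus an edge $e\in Q_{T'}$ avoiding its cover, yielding a local $\{Y,E\}$-tiling covering $23>20$ vertices. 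The extra triple is what supplies the missing degrees of freedom; your single-triple argument cannot be repaired into the same conclusion.
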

\begin{proof}
We claim that $ \cT_1$ does not contain $ \varepsilon n $ triples of members of $ \cY $ that are  pairwise disjoint.
Suppose otherwise, let $ \cT_1'$ be a subset of $ \cT_1$ consisting of  $ \varepsilon n $ pairwise disjoint triples in $\binom{ \cY}{3} $.
Note that two triples $ T_1 $ and $T_2$ in $ \binom{ \cY}{3} $ are disjoint if and only if $ T_1\cap T_2=\emptyset $.
By the definition of $ \cT_1$, for any $T\in \cT_1'$ we can find  a $\{Y, E\}$-tiling in $ H'[V(T)\cup W]$ covering at least $ 13 $ vertices but no more than 16 vertices (by excluding members of the tiling if necessary).
Note that the $ \{Y, E\} $-tiling consists of edges of the form $ YYY $ and $ YYW $ and thus at most half of the vertices in each member of the $\{Y, E\}$-tiling are from $ W $.
So the $ \{Y, E\} $-tiling covers at most 8 vertices in $ W $. 
By the definition of $ G_T $, for each $\{u, v\}\in E(G_T)$, there are at least $ 16\varepsilon n $ vertices $w$ in $W$ such that $ \{u,v,w\}\in E(H) $.
Thus for each edge of the form $ YYW $  in the above $ \{Y, E\} $-tiling, there are at least $ 16\eps n $ choices for the vertex in $ W $.
Since  $ |\cT_1'|=\eps n $,
we can find $ W'\subseteq W $ with  $ |W'|\le8\varepsilon n$ such that $ H'\left[W'\cup (\bigcup_{T\in \cT_1'}V(T))\right] $ contains a $\{Y, E\}$-tiling covering at least $ 13\varepsilon n $ vertices. 
Together with the members of $ \cY $ that are not in any triple of $ \cT_1'$, we get a $\{Y, E\}$-tiling covering at least $ 4t_2+\varepsilon n $ vertices. 
By Proposition~\ref{prop}, there exists a $Y$-tiling $\mathcal{Y}'  $ in $ H[R\cup(W\setminus W')] $ of size $t_1$. 
Thus we have a $\{Y, E\}$-tiling covering $ 4t_1+4t_2+\varepsilon n $ vertices,  contradicting our assumption that there exists no $\{Y,E\}$-tiling in $ H $ covering $ 4|\cT|+\varepsilon n $ vertices. 

Suppose $ \cT_1''$ is a maximal subset of $ \cT_1 $ such that members of $ \cT_1'' $ are pairwise disjoint. 
Then other triples in $ \cT_1 $ must have nonempty intersection with some member  in $\cT_1''$ and $ |\cT_1''|\le\eps n-1$. 
Thus 	
\[|\label{1.7} \cT_1|\le 3(\varepsilon n-1)t_{2}^{2}\le\varepsilon'n^{3}/2. \tag{4.7}\]

%Let $ \cT_2':=\{T\in \binom{\cY}{3}\setminus  \cT_1: x_T=15, f(x_T)=47\}  $. 
Now we prove $|\cT_2|\le \varepsilon'n^{3}/2 $. 
For any triple $ T=\{Y_{i},Y_{j},Y_{k}\}\in \cT_2$, we have $ f(T)\ge17,  g(T)\ge38$.	
Towards a contradiction, we may assume 	$| \cT_2|> \varepsilon'n^{3}/2 $. 
Define an oriented $3$-graph $ F $ on $ \cY $. 
The ordered triple $ (Y_i,Y_j,Y_k )$ is an oriented edge if  $ \{Y_i,Y_j,Y_k\}\in \cT_2 $ and 
$ e(G[V(Y_i),V(Y_j)])\ge6 $.
Thus  $e(F)> \varepsilon'n^{3}/2>12\varepsilon n^{3} $. 
Applying Lemma~\ref{f4}, %with $ \varepsilon'/2 $ in place of $\varepsilon $ , 
we get  a family of $\varepsilon n$  vertex-disjoint copies of $ \Lambda$, which is denoted by  $ \mathcal{P} $. 

Consider a member of $ \mathcal{P} $ with edges   $ (Y_{i_{1}},Y_{i_{2}},Y_{i_{3}} )$ and $ (Y_{i_{4}},Y_{i_{5}},Y_{i_{3}} ) $. 
Let  $ T':=\{Y_{i_{1}},Y_{i_{2}},Y_{i_{3}}\}$, $T'':=\{Y_{i_{4}},Y_{i_{5}},Y_{i_{3}}\}$, and $V_{k}:= V(Y_{i_{k}})$ for $ k\in[5] $.
Then we have the corresponding $3$-partite graphs $ G_{ T'},G_{ T''} $ and $3$-partite $3$-graphs $ Q_{ T'},Q_{ T''} $.
Let $ G:= G_{T'}\cup G_{T''} $.
Note that $ f(T')\ge17 $ and $ f(T'')\ge17  $.
Applying Claim~\ref{vtxcov}, we suppose that $ \{v_1,v_2,v_3\} $ is the transversal vertex cover of $ G_{ T'} $ and $ \{v_3',v_4,v_5\} $ is the transversal vertex cover of $ G_{ T''} $ with  $ v_3'\in V_3 $, $ v_i\in V_i $ for $ i\in[5] $.
Since $ g(T')\ge38 $ and the number of edges intersecting $ \{v_1,v_2,v_3\} $ is at most 37, there exists $ e\in Q_{T'} $ such that $ e\cap\{v_1,v_2,v_3\}=\emptyset $.
Note that $\{v_i, v_{i+1}\} $ is a vertex cover of $ G[V_i, V_{i+1}] $ for $i\in \{1,4\}$.
Then  $ 6\le e(G[V_i, V_{i+1}])\le7 $ for $i\in \{1,4\}$ and $ d_{G_{T''-\{v_4,v_5\}}}(v_3')\ge17-(8\times2-1)=2 $.
So we can find disjoint $ e_i\in E(G[V_1, V_{2}]-e) $ containing $ v_i $ for $ i\in[2] $ and $ e_3\in E(G_{ T''-\{v_4,v_5\}}) $ containing $ v_3' $.
Finally, we find disjoint $ e_j\in E(G[V_4, V_{5}]-e_3) $ containing $ v_j $ for $ j\in\{4,5\} $.
Thus $ \{e_i:i\in[5]\} $ is a matching of size five in $ G-e $, see Figure~\ref{figure2}.
Since each edge of $ \{e_i:i\in [5]\} $ can be extended to a copy of $Y$ by using two vertices in $W$, we may replace $T'\cup T''$ by five disjoint copies of $Y$ and $ e $ to obtain a $\{Y, E\}$-tiling  covering $ 5\times4+3=23>20 $ vertices. 

Applying this argument in the previous paragraph  to each member of $ \mathcal{P} $, we get that there exists $ W' \subseteq W $ with $ |W'|=10\varepsilon n $ such that $ V(\mathcal{P})\cup W'  $ contains a $\{Y, E\}$-tiling covering $ 23\varepsilon n $ vertices. 
Together with $ \cY $, we get a $\{Y, E\}$-tiling $ \cT' $ covering  $ 4t_2+3\varepsilon n $ vertices.
Due to Proposition~\ref{prop}, there exists a $Y$-tiling $\mathcal{Y}'  $ in $ H[R\cup(W\setminus W')] $ of size $t_1$. 
Thus $\mathcal{Y}'\cup \cT' $ is a $\{Y, E\}$-tiling  covering  $ 4t_1+4t_2+3\varepsilon n $ vertices,  contradicting our assumption on $ H $. 
Hence 	$| \cT_2|\le  \varepsilon'n^{3}/2 $. 
Combining with  \eqref{1.7}, we get	$ |\cT_1 \cup \cT_2 |\le \varepsilon'n^{3} $.	
\end{proof}

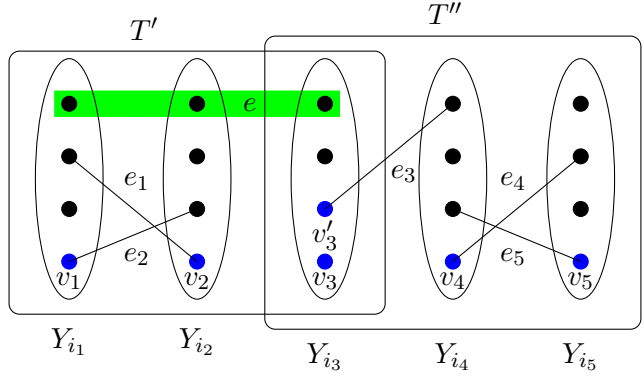
\begin{figure}[h]
	\begin{tikzpicture}
		[inner sep=2pt,
		vertex/.style={circle, draw=black!50, fill=black!50},
		%   rect/.style={rectangle, inner sep=7,minimum size=0.8},
		]
		\draw[line width=10pt, color=green] (-1.2,1)--(2.6,1);
		\draw[rounded corners] (-1.8,1.7) rectangle (3.2,-1.8);
		\draw[rounded corners] (1.6,1.9) rectangle (6.6,-2);
		\draw (-1,0) ellipse (0.45 and 1.6);
		\draw (0.7,0) ellipse  (0.45 and 1.6);
		\draw (2.4,0) ellipse  (0.45 and 1.6);
		\draw (4.1,0) ellipse  (0.45 and 1.6);
		\draw (5.8,0) ellipse  (0.45 and 1.6);
		
		\node at (-1,1) [vertex,color=black] {};
		\node at (-1,0.3) [vertex,color=black] {};
		\node at (-1,-0.4) [vertex,color=black] {};
		\node at (-1,-1.1) [vertex,color=blue] {};
		
		\node at (0.7,1) [vertex,color=black] {};
		\node at (0.7,0.3) [vertex,color=black] {};
		\node at (0.7,-0.4) [vertex,color=black] {};
		\node at (0.7,-1.1) [vertex,color=blue] {};
		\node at (2.4,1) [vertex,color=black] {};
		\node at (2.4,0.3) [vertex,color=black] {};
		\node at (2.4,-0.4) [vertex,color=blue] {};
		\node at (2.4,-1.1) [vertex,color=blue] {};
		\node at (4.1,1) [vertex,color=black] {};
		\node at (4.1,0.3) [vertex,color=black] {};
		\node at (4.1,-0.4) [vertex,color=black] {};
		\node at (4.1,-1.1) [vertex,color=blue] {};
		\node at (5.8,1) [vertex,color=black] {};
		\node at (5.8,0.3) [vertex,color=black] {};
		\node at (5.8,-0.4) [vertex,color=black] {};
		\node at (5.8,-1.1) [vertex,color=blue] {};
		
		\draw (-1,0.3) -- (0.7,-1.1);
		\draw (0.7,-0.4) -- (-1,-1.1);
		\draw (5.8,0.3) -- (4.1,-1.1);
		\draw (4.1,-0.4) -- (5.8,-1.1);
		\draw (2.4,-0.4) -- (4.1,1);

		\node at (1.4,0.75) [label=above:$e$] {};
		\node at (-0.1,-0.3) [label=above:$e_1$] {};
		\node at (-0.1,-1.3) [label=above:$e_2$] {};
		\node at (4.9,-0.3) [label=above:$e_4$] {};
		\node at (3.45,-0.2) [label=above:$e_3$] {};
		\node at (4.9,-1.3) [label=above:$e_5$] {};
		
		\node at (-1,-1.62) [label=above:$v_{1}$] {};
		\node at (0.7,-1.62) [label=above:$v_{2}$] {};
		\node at (2.4,-1.62) [label=above:$v_{3}$] {};
		\node at (2.4,-1.1) [label=above:$v_{3}'$] {};
		\node at (4.1,-1.62) [label=above:$v_{4}$] {};
		\node at (5.8,-1.62) [label=above:$v_{5}$] {};
		\node at (-1,-2.5) [label=above:$Y_{i_{1}}$] {};
		\node at (0.7,-2.5) [label=above:$Y_{i_{2}}$] {};
		\node at (2.4,-2.7) [label=above:$Y_{i_{3}}$] {};
		\node at (4.1,-2.7) [label=above:$Y_{i_{4}}$] {};
		\node at (5.8,-2.7) [label=above:$Y_{i_{5}}$] {};
		\node at (0,1.7) [label=above:$	T'$] {};
		\node at (4,1.9) [label=above:$	T''$] {};
		
	\end{tikzpicture}
	\caption{Five disjoint edges avoiding $ e$ in $ G_T $ in a copy of $ \Lambda$.}
	\label{figure2}
\end{figure}

Finally we are ready to prove Lemma~\ref{lm2}.
\begin{proof}[Proof of Lemma~\ref{lm2}]
Fix an arbitrary $ T \in \binom{\cY}{3}\setminus  (\cT_{1}\cup\cT_{2}) $.
Suppose $ T=\{Y_{i_1},Y_{i_2},Y_{i_3}\}$.
Let $ V_1:=V(Y_{i_1})=\{h_1,h_2,h_3,h_4\} $, $ V_{2}:=V(Y_{i_2})=\{k_{1},k_{2},k_{3},k_4\}$ and $V_{3}:=V(Y_{i_3})=\{g_{1},g_{2},g_{3},g_4\}$.
By Claim~\ref{c3}, we have $ e(G_{P})\le8$ for $P\in\binom{\cY}{2}  $.
So $ f(T)\le8\times 3=24 $.
We shall show $ g(T)\le 64-\dfrac{9}{8}f(T) $ in several cases depending on the value of $ f(T) $. More precisely, we prove the following bounds in Table~\ref{table}. 
\begin{table}[!hbp]
\begin{tabular}{|c|c|c|c|c|c|c|}
\hline

\hline

$ f(T) $ & 0 &  $ 1\sim 10  $ &  $ 11\sim 14 $ &  $ 15\sim 16 $ &   $ 17\sim 24 $\\

\hline

$ g(T) $ & $ \le64 $ & $ \le52 $ & $ \le48 $ & $ \le46 $  & $ \le37$ \\

\hline

\end{tabular}
\caption{\label{table} A table summarizing the proof of Lemma~\ref{lm2}.}
\end{table} 

\bm{$ f(T)=0 $}. 
The trivial upper bound on $ g(T)$ is 64.

\bm{$1\le f(T)\le10 $}. 
Suppose $ h_1k_1\in E(G_{T}) $. 
We claim that  $Q'_{T}:=Q_{T}-\{h_1,k_1\}$ has no matching of size three. 
Otherwise $ \{h_1,k_1\}\cup W $ contains a copy of $ Y $, which together with a matching of size three in  $Q'_{T}$ forms a  $\{Y, E\}$-tiling on $ V(T)\cup W $ covering  13 vertices, contradicting that $ T\notin \cT_{1} $.
So applying Fact~\ref{f0} to $ Q'_{T} $, we have $e(Q'_{T})\le24.$ 
Let $ E_{h_1,k_1} :=\{xyz\in E(Q_{T}):\{x,y,z\}\cap\{h_1,k_1\}\not= \emptyset\}$.
Note that $ |E_{h_1,k_1}|\le4^{3}-3\times3\times4=28. $ Hence
\[
g(T)= e(Q'_{T})+|E_{h_1,k_1}|\le 24+28=52.
\]

\bm{ $11\le f(T)\le14 $}. 
If $ Q_{T} $ has no perfect matching, then by Fact~\ref{f0}, $  g(T)\le 48$. 	
Otherwise, suppose $ Q_T $ has a perfect matching $\{e_1,e_2,e_3,e_4 \}$, then by Fact~\ref{f2}, $f(T)\le12  $.    
Let $ a_1:=|\{(p,q):  e(G_T[e_p,e_q])=1\}| $ and $ a_2:=|\{(p,q):  e(G_T[e_p,e_q])=2\}| $. 
Applying Fact~\ref{f2}, we have $ f(T)=\sum_{ p,q\in[4], p\neq q}e(G_T[e_p,e_q])=a_1+2a_2$ with $ a_1+a_2\le6 $ and  $  g(T)\le 4^{3}-(8-6)a_1-(8-5)a_2 $.
Since $11\le f(T)=a_1+2a_2\le 14 $ and $ a_1+a_2\le6 $, we have  $11\le f(T)\le12 $ and $ a_1+a_2=6 $.
If $ f(T)=11 $, then $ a_1=1, a_2=5 $, so $  g(T)\le 4^{3}-(8-6)-5(8-5) =47$.
If $ f(T)=12 $, then $ a_1=0, a_2=6 $, so $  g(T)\le 4^{3}-6(8-5) =46$.
Thus $  g(T)\le 48$.

\bm{$15\le f(T)\le16 $}. 
By $ f(T)>12  $ and Fact~\ref{f2}, we get that $ Q_{T} $ has no perfect matching.
Then by Fact~\ref{f0}, $  g(T)\le 48$. 	 
Suppose   $g(T)\in\{47,48\}$. 
Since $ Q_{T} $ has no matching of size four and $ g(T)\in\{47,48\} $, using Fact~\ref{f1} and Fact~\ref{f11},  we get $  Q_{T}\cong  H^{(3)}(3,4)^{-} $ or $  Q_{T}\cong  H^{(3)}(3,4) $. 
Without loss of generality, suppose $ I\subseteq V_2 $ is a set of three vertices which are in the same part as the isolated vertex of $ Q_{T} $. 
Note that $ I $ is a vertex cover of $ Q_{T} $.   
We observe that $ I $ is also a vertex cover of $ G_{T} $.  
Indeed, suppose $e\in E(G_{T}-I) $, then we can find a matching of size three in $Q_{T}-e  $ because  $ H^{(3)}(3,4)^{-}\subseteq Q_{T} $.  
Since $e$ can be extended to a copy of $Y$ by using two vertices in $W$, we get a $ \{Y, E\} $-tiling on $ V(T)\cup W $ with 13 vertices, contradicting that $ T\notin \cT_1 $.
So $ I $ is a vertex cover of $ G_{T} $ and $ E(G_{T}[V_1\cup V_3])=\emptyset $.  
By the definitions of $ G_{T} $ and $ \mathcal{D} $, we get that for $P \subseteq T$, if $P \in \mathcal{D}$, then $ E(G_{P})=\emptyset $, otherwise $ e(G'_P)\le5 $.
%Recall that $ \mathcal{D} $ is the collection of $ P\in \binom{\cY}{2} $ such that $ e(G'_P)\ge6 $  and $ G'_P $ has a vertex cover of two vertices which are in the same part. 
Thus we have $ f(T)\le 5\times2=10 $, which is a contradiction.
So  $ g(T)\le46$. 

\bm{$ 17\le f(T)\le 24 $}. 
Since $ T\notin \cT_{2} $, we have $ g(T)\le37$.

In summary, for any $ T \in \binom{\cY}{3}\setminus  (\cT_{1}\cup\cT_{2}) $, we have $ g(T)\le 64-\dfrac{9}{8}f(T) $. 
By Claim~\ref{c5}, we get that $|\cT_{1}\cup\cT_{2}|\le\varepsilon' n^{3}$, completing the proof of Lemma~\ref{lm2}.
\end{proof}
\subsection{The proof of Fact~\ref{f1}}
Suppose $ H:=H[V_1,\dots,V_k] $ is a $ k $-partite $ k $-graph with $ |V_i|=n$ for $ i\in[k] $ and $ e(H)=tn^{k-1}-1 $, and $ H $ contains no matching of size $ t+1 $. 
Our goal is to show $ H\cong  H^{(k)}(t,n)^{-} $.
For $ k=2 $, since $ tn-1> (t-1)n $, by Fact~\ref{f11}, $ H $ has a matching of size $ t $. By K\"{o}nig's theorem \cite{2000Graph}, there is a vertex cover in $ H $ of size $ t $.
As $ t\ge 3 $, to cover all $ tn-1 $ edges, these $ t $ vertices must be in the same part. 
Thus $ H\cong H^{(2)}(t,n)^{-} $.

Now we assume $ k>2 $. 
Consider the complete $ (k-1) $-partite $ (k-1) $-graph on $ (V_1,\dots, 
V_{k-1}) $ denoted by $ K^{(k-1)} $ and a matching $ M $ of it.
We define an auxiliary bipartite graph $ F_M $ with vertex classes $ M $ and $ V_k $ such that $ \{e,v\}\in E(F_M) $, $ e\in M $, $ v\in V_k $ if and only if $ e\cup\{v\}\in E (H)$.
Let $ M_1,\dots, M_{n^{k-2}} $ be a 1-factorization of $ K^{(k-1)} $, that is, a decomposition of $ E( K^{(k-1)}) $ into $ n^{k-2} $ perfect matchings.
Since $ H $ has no matching of size $ t+1 $, so does $ F_{M_i} $, $ i\in[n^{k-2}] $, as otherwise a matching of size $ t+1 $ in $ F_{M_i} $ gives a matching of size $ t+1 $ in $ H $, which is a contradiction.
So using Fact~\ref{f11}, we have $ e(F_{M_i})\le tn $. 
As $ \sum_{i=1}^{n^{k-2}}e(F_{M_i}) = e(H)=tn^{k-1}-1  $, there exists $ p\in[n^{k-2}] $ such that $ e(F_{M_p})= tn-1 $ and  $ e(F_{M_i})= tn $ for $ i\in[n^{k-2}]\setminus \{p\} $.
By the case $ k=2 $ and Fact~\ref{f11}, for $ i\in[n^{k-2}] $ we have $ F_{M_i}\cong  H^{(2)}(t,n)^{-}  $ or $ F_{M_i}\cong  H^{(2)}(t,n)  $, that is, there is a vertex cover $ C_i $ of size $ t $ in $ F_{M_i} $ such that either $ C_i \subseteq  M_i  $ (we say such $ M_i $ is of type I) or $ C_i \subseteq V_k $  (we say such $ M_i $ is of type II).

Since every perfect matching $ M $ in $ K^{(k-1)} $ belongs to a 1-factorization, for any matching $ M $ in $ K^{(k-1)} $, there is a vertex cover $ C_{M} $ in $ F_{M} $ of size $ t $ such that $ C_M \subseteq  M  $ (type I) or $ C_M \subseteq V_k $ (type II).
We say that  two perfect matchings $ M' $ and  $ M'' $ of $ K^{(k-1)} $ are equivalent,  if both $ M'$ and $ M'' $ are of type II with $ C_{M'}=C_{M''} $ or  both $ M'$ and $ M'' $ are of type I, and write  $M'\simeq M''$.
Note that for any edge  $ e $ of $ K^{(k-1)} $, the set $ N_{F_{M}}(e) = N_H(e) $ is the same for all $ M $ containing $ e $.
Using this we show the following claim.
\begin{claim}\label{1.10}
Suppose $ M'$ and  $M'' $ are perfect matchings of $ K^{(k-1)} $ with $ |M'\cap	M''|\ge1 $, then  $M'\simeq M''$. 
\end{claim}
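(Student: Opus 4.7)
The plan is to fix a shared edge $e \in M^* \cap M^{**}$ and case-split on the types of the two matchings, crucially using that $N_H(e) \subseteq V_k$ depends only on $e$ and not on the matching containing it. For any perfect matching $M \ni e$ of $K^{(k-1)}$, the type of $M$ forces $|N_H(e)|$ into one of three regimes: $\{n-1, n\}$ (type I, $e \in C_M$), $\{0\}$ (type I, $e \notin C_M$), or $\{t-1, t\}$ with $N_H(e) \subseteq C_M$ (type II). Since $3 \le t \le n-1$, these three regimes are pairwise disjoint outside the boundary case $t = n-1$. Hence if $M^*$ and $M^{**}$ are both type I, they are equivalent by definition; and if both are type II with $|N_H(e)| = t$, then $C_{M^*} = N_H(e) = C_{M^{**}}$ is immediate.

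The substantial subcase of both type II has $|N_H(e)| = t-1$, in which $e$ is the unique missing edge in each of $F_{M^*}$ and $F_{M^{**}}$, and every other $e' \in M^* \setminus \{e\}$ satisfies $N_H(e') = C_{M^*}$ (analogously for $M^{**}$). Assuming $C_{M^*} \ne C_{M^{**}}$ for contradiction, I would select $e' \in M^* \setminus \{e\}$ and $e'' \in M^{**} \setminus \{e\}$ that are disjoint as edges of $K^{(k-1)}$; this is possible whenever $n > k$ (in particular for $k=3$, $n \ge 4$), since any fixed $e'$ shares a vertex with at most $k-1$ edges of $M^{**}$, leaving at least $n-k$ valid $e''$. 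Extending $\{e, e', e''\}$ to a perfect matching $N$ of $K^{(k-1)}$, the earlier argument guarantees $F_N \cong K^{(2)}_{t,n}$ or $K^{(2)-}_{t,n}$. But $N$ of type II would force $C_N \supseteq N_H(e') \cup N_H(e'') = C_{M^*} \cup C_{M^{**}}$, giving $|C_N| \ge t+1$; and $N$ of type I would force $|N_H(e)| \in \{0, n-1, n\}$, contradicting $|N_H(e)| = t-1 \in [2, n-2]$. Both options are impossible, so $C_{M^*} = C_{M^{**}}$.

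The remaining mixed case (say $M^*$ type I, $M^{**}$ type II) only occurs when $t = n-1$ and $|N_H(e)| = n-1$. I would handle it by the same strategy, picking $e_1 \in C_{M^*} \setminus \{e\}$ (which has $N_H(e_1) = V_k$) and $e_2 \in M^{**} \setminus \{e\}$ disjoint from $e_1$, and extending to a perfect matching $N$. Type II for $N$ fails since $C_N \supseteq V_k$ has size $n > t$, and type I fails because $F_N \cong K^{(2)-}_{t,n}$ can accommodate exactly one edge of $C_N$ with degree $n-1$ (which must be $e$), leaving no consistent placement for $e_2$: either $|N_H(e_2)| = n$, contradicting $|N_H(e_2)| \le n-1$ from $M^{**}$, or $|N_H(e_2)| = 0$, contradicting $|N_H(e_2)| \ge t-1 \ge 2$. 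I expect the only substantive technicality in the whole argument to be this combinatorial extension step to a perfect matching of $K^{(k-1)}$, which is entirely routine in the $n > k$ regime containing the paper's primary application.
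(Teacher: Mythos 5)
Your argument follows a genuinely different route from the paper's. The paper first reduces $|M^* \cap M^{**}| \ge 1$ to the case $|M^* \cap M^{**}| \ge 2$ by choosing a bridging edge $e_4$ disjoint from $e_1, e_2, e_3$ and introducing two auxiliary matchings $M_0^*$, $M_0^{**}$, each sharing two edges with the relevant matchings; the $\ge 2$ case is then settled by summing $|N_H(e_i)|$ over the two shared edges when the types differ, and by pinning down the covers when both are of type II. You instead work directly on a single shared edge $e$, organize the case-split around the three possible regimes $\{0\}$, $\{n-1,n\}$, $\{t-1,t\}$ for $|N_H(e)|$, and build one auxiliary matching $N$ extending $\{e, e', e''\}$ whose type can be shown to be inconsistent. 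Your case analysis and the resulting type contradictions are all correct.

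The gap is exactly the one you flag: you need $e' \in M^* \setminus \{e\}$ and $e'' \in M^{**} \setminus \{e\}$ to be \emph{disjoint} so they can live in the same matching $N$, and your counting guarantees such a pair only when $n > k$. But Fact~\ref{f1}, and hence Claim~\ref{1.10} inside its proof, is stated for all $k \ge 2$, $n \ge 4$, so the range $4 \le n \le k$ is not covered — and it is not vacuous. For example, in $K^{(3)}$ with $n=4$ (so $k=4$), take $M^*\setminus\{e\}$ to be the diagonal triples indexed $(1,1,1),(2,2,2),(3,3,3)$ and $M^{**}\setminus\{e\}$ the cyclic ones $(1,2,3),(2,3,1),(3,1,2)$: every $e'$ meets every $e''$, so no valid $N$ exists. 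The paper's two-matching construction sidesteps this because $e_2\in M^*$ and $e_3\in M^{**}$ are permitted to intersect; only the bridging edge $e_4$ must avoid the at most three used vertices in each part, so $n \ge 4$ suffices regardless of $k$. As written, your proposal establishes the claim for the parameters actually used in the paper ($k=3$, $n=4$), but to prove Claim~\ref{1.10} in its stated generality you would need to fall back on the paper's bridging-edge reduction whenever $n \le k$.
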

\begin{proof}
First of all, we show that if $ |M'\cap M''|\ge 2 $, then $M'\simeq M''$.
Indeed, we suppose that $ M' $ and $ M'' $ are of different types. 
Without loss of generality, suppose $ M' $  is of  type I and $ M'' $ is of  type II, and $ e_1,e_2\in M'\cap M'' $.
For each $ i\in[2] $, we have $ |N_{F_{M''}}(e_i)|\in \{t-1,t\}$ and  $|N_{F_{M'}}(e_i)|\in \{0,n-1,n\}$. Note that at most one of $ |N_{F_{M'}}(e_1)| $ and $ |N_{F_{M'}}(e_2)| $ is $ n-1 $,
 and $ N_{F_{M'}}(e_i)=N_{F_{M''}}(e_i)=N_{H}(e_i) $ for $ i\in[2] $.
So \[2n-1\le\sum_{i\in[2]}|N_{F_{M'}}(e_i)|=\sum_{i\in[2]}|N_{H}(e_i)|=\sum_{i\in[2]}|N_{F_{M''}}(e_i)|\le2t,\] contradicting that $ t\le n-1 $.
Thus, $M'$ and $ M''$ are of the same type.
Suppose both $ M'$ and  $ M'' $ are  of  type II, then the  vertex covers satisfy $ C_{M'}=C_{M''} $.
Indeed, suppose $ C_{M'}\neq C_{M''} $, then there exists $ e_i$, where $i\in [2] $, such that  $ N_{F_{M'}}(e_i)\not=N_{F_{M''}}(e_i)$, which is a contradiction. 
So $M'\simeq M''$.

Secondly, we suppose $ e_1\in M'\cap M'' $, $ e_2\in M' $ and $ e_3\in M'' $, where $ e_1\cap e_2=e_1\cap e_3=\emptyset $.
Since $ n\ge4 $, we can choose $ e_4\in E( K^{(k-1)}) $ disjoint from $ e_i $, $ i\in[3] $.
Consider  perfect matchings $ M_0' $ and $ M_0'' $ of $ K^{(k-1)} $ such that $ e_1,e_2,e_4\in  M_0' $ and $ e_1,e_3,e_4\in  M_0'' $. 
Since $ |M_0'\cap M'|\ge 2 $, $ |M_0''\cap M''|\ge 2 $ and $ |M_0'\cap M_0''|\ge 2 $, we have $M'\simeq M_0'$, $M''\simeq M_0''$ and $M_0'\simeq M_0''$.
Thus $M'\simeq M''$.
%If all of $ M' , M' , M_0', M_0'' $ are of type  II, then by transitivity, we have $ C_{M'}=C_{M''} $.
%Secondly, we claim that if there exist three edges $ e_0, e_1, e_2 $ with $ e_0, e_1\in M' $, $ e_0, e_2\in M'' $ and $ e_1\cap e_2=\emptyset $, then  $M'\simeq M''$.Consider a perfect matching $ M $ of $ K^{(k-1)} $ such that $ e_0,e_1,e_2\in M $. Since $ |M\cap M'|\ge 2 $ and $ |M\cap M''|\ge 2 $, we have $M'\simeq M$ and $M''\simeq M$.Thus $M'\simeq M''$.
\end{proof}

Next we show that all perfect matchings of $ K^{(k-1)} $ are  equivalent.
For any perfect matchings $ M' $ and $ M'' $ of $ K^{(k-1)} $, suppose $ e_1\in M'' $ and $ e_2\in M' $.
Since $ n\ge4 $, we can choose $ e_3\in E( K^{(k-1)}) $ such that $ e_3\cap (e_1\cup e_2)=\emptyset $.
Let $ M_{e_i} $ be a perfect matching in $ K^{(k-1)} $ containing $  e_i,e_3 $ for each $ i\in[2] $.
So by Claim~\ref{1.10} and transitivity, we have $M'\simeq M_{e_1}\simeq M_{e_2}\simeq M''$.
%So if all perfect matchings of $ K^{(k-1)} $ are of type II, then the vertex cover  $ C_{M} $ of $ F_{M} $ is the same for all  perfect matching $ M $ of $ K^{(k-1)} $.

Finally we fix a 1-factorization  $ M_1,\dots, M_{n^{k-2}} $ of $ K^{(k-1)} $.
If all $ M_i $ are of type II, then there exists $ C\subseteq V_k $ which is a vertex cover of size $ t $ in $ F_{M_{i}} $ for all $ i\in[n^{k-2}] $. 
Therefore, by the definition of $ F_{M_{i}} $, $ C $ is a %minimal 
vertex cover of $ H $ of size $ t $.
Since  $ e(H)=tn^{k-1}-1 $, we get $ H\cong H^{(k)}(t,n)^{-} $.
Finally, consider the case that $ C_{M_i} \subseteq M_i $ for all $ i\in[n^{k-2}] $.
Set $ H' :=\bigcup_{i\in[n^{k-2}] }C_{M_i}$. 
Then $ H' $ has $ tn^{k-2} $ $ (k-1) $-sets.  
Note that each $ C_{M_i} $ contains $ t $ edges and all but one of these $ t $ edges have degree $ n $ in $ H $, while the last one has degree $ n $ or $ n-1 $ in $ H $ and in total at most one edge has degree $ n-1 $.
We see that $ H' $ has no matching of size $ t + 1 $, as otherwise this matching could be extended to a matching of size $ t + 1 $ in $ H $, which is a contradiction. 
Applying Fact~\ref{f11}, we get $ H'\cong  H^{(k-1)}(t,n)  $, which implies $ H\cong   H^{(k)}(t,n)^{-}  $.

\section*{Acknowledgment}
We would like to thank Donglei Yang for valuable discussions and anonymous referees for their valuable comments that greatly improved the presentation of this paper.
Guanghui Wang was supported by  National Key R\&D Program of China (2020YFA0712400), Natural Science Foundation of China (12231018) and Young Taishan Scholars probgram of Shandong Province (201909001).	Jie Han was supported by  Natural Science Foundation of China (12371341).
%%%%%%%%%%%%%%%%%%%%%%%%%%%%%%%%%%%%%%%%%%%
%%%%%%%%%%%%%%% interval case %%%%%%%%%%%%%%%%%%

%%%%%%%%%%%%%%%%%%%%%%%%%%%%%%%%%%%%%%%%%%%%%%%%%%%%%%%%%%%%%%%%%%%%%%%%%%%%%%%%%%%%%%
	
\bibliographystyle{abbrv}
\bibliography{LargeYfinal}

\end{document}